\newtheorem{theorem}{Theorem}[section]
\newtheorem{lemma}[theorem]{Lemma}
\newtheorem{proposition}[theorem]{Proposition}
\newtheorem{corollary}[theorem]{Corollary}
\newtheorem{claim}[theorem]{Claim}
\newtheorem{step}{Step}[subsection]
\newtheorem{alphtheorem}{Theorem}
\theoremstyle{definition}
\newtheorem{definition}[theorem]{Definition}
\newtheorem{notation}[theorem]{Notation}
\theoremstyle{remark}
\newtheorem{remark}[theorem]{Remark}
\newtheoremstyle{cited}{.5\baselineskip\@plus.2\baselineskip\@minus.2\baselineskip}{.5\baselineskip\@plus.2\baselineskip\@minus.2\baselineskip}{\itshape}{}{\bfseries}{\bfseries .}{5pt plus 1pt minus 1pt}{\thmname{#1}\thmnumber{~#2}\thmnote{ \normalfont#3}}
\theoremstyle{cited}
\newtheorem{citedthm}[theorem]{Theorem}
\newtheorem{citedconj}[theorem]{Conjecture}
\newtheorem{citedlem}[theorem]{Lemma}
\newtheoremstyle{citeddef}{.5\baselineskip\@plus.2\baselineskip\@minus.2\baselineskip}{.5\baselineskip\@plus.2\baselineskip\@minus.2\baselineskip}{}{}{\bfseries}{\bfseries .}{5pt plus 1pt minus 1pt}{\thmname{#1}\thmnumber{~#2}\thmnote{ \normalfont#3}}
\theoremstyle{citeddef}
\newtheorem{citednot}[theorem]{Notation}
\DeclareMathOperator{\codim}{codim}
\DeclareMathOperator{\discrep}{discrep}
\DeclareMathOperator{\exc}{exc}
\DeclareMathOperator{\Bs}{Bs}
\DeclareMathOperator{\Bsp}{\mathbf{B}_+}
\DeclareMathOperator{\SB}{\mathbf{B}}
\DeclareMathOperator{\Hom}{Hom}
\DeclareMathOperator{\HHom}{\mathcal{H}\!\mathit{om}}
\DeclareMathOperator{\RHHom}{\mathbf{R}\mathcal{H}\!\mathit{om}}
\DeclareMathOperator{\Spec}{Spec}
\DeclareMathOperator{\Supp}{Supp}
\DeclareMathOperator{\Sym}{Sym}
\newcommand{\QQ}{\mathbb{Q}}
\newcommand{\RR}{\mathbb{R}}
\newcommand{\ZZ}{\mathbb{Z}}
\newcommand{\cO}{\mathcal{O}}
\newcommand{\fb}{\mathfrak{b}}
\newcommand{\id}{\mathrm{id}}
\newcommand{\reg}{\mathrm{reg}}
\newcommand{\red}{\mathrm{red}}
\newcommand{\Tr}{\mathrm{Tr}}
\newcommand{\sF}{\mathscr{F}}
\newcommand{\sG}{\mathscr{G}}
\newcommand{\sM}{\mathscr{M}}
\newcommand{\sL}{\mathcal{L}}
\newcommand{\Mod}{\mathsf{Mod}}
\newcommand{\DD}{\mathsf{D}}
\newcommand{\hooklongrightarrow}{\lhook\joinrel\longrightarrow}
\newcommand{\longtwoheadrightarrow}{\mathrel{\text{\tikz \draw [-cm double to] (0,0) (0.05em,0.5ex) -- (1.525em,0.5ex);}\hspace{0.05em}}}
\begin{document}
\title[Effective generation and twisted weak positivity]{Effective generation
and twisted weak positivity of direct images}
\author{Yajnaseni Dutta}
\address{Department of Mathematics\\Northwestern University\\
Evanston, IL 60208-2730, USA}
\email{\href{mailto:ydutta@math.northwestern.edu}{ydutta@math.northwestern.edu}}
\urladdr{\url{https://sites.math.northwestern.edu/~ydutta/}}

\author{Takumi Murayama}
\address{Department of Mathematics\\University of Michigan\\
Ann Arbor, MI 48109-1043, USA}
\email{\href{mailto:takumim@umich.edu}{takumim@umich.edu}}
\urladdr{\url{http://www-personal.umich.edu/~takumim/}}

\keywords{pluricanonical bundles, Fujita's conjecture, weak positivity, effective results, Seshadri constants}
\subjclass[2010]{Primary 14C20; Secondary 14D06, 14F05, 14E30, 14Q20, 14J17}

\makeatletter
  \hypersetup{
    pdfauthor=Yajnaseni Dutta and Takumi Murayama,
    pdfsubject=\@subjclass,
    pdfkeywords=\@keywords
  }
\makeatother

\begin{abstract}
  In this paper, we study pushforwards of log pluricanonical bundles on
  projective log canonical pairs $(Y,\Delta)$ over the complex numbers,
  partially answering a Fujita-type conjecture due to Popa and Schnell in the log
  canonical setting.
  We show two effective global generation results.
  First, when $Y$ surjects onto a projective variety, we show a
  quadratic bound for generic generation for twists by big and nef line bundles.
  Second, when $Y$ is fibered over a smooth projective variety, we
  show a linear bound for twists by ample line bundles.
  These results additionally give effective non-vanishing statements.
  We also prove an effective weak positivity statement for
  log pluricanonical bundles in this setting, which may be of independent interest.
  In each context we indicate over which loci positivity holds. Finally, using
  the description of such loci, we show 
  an effective vanishing theorem for pushforwards of certain log-sheaves under smooth 
  morphisms.
\end{abstract}

\maketitle
\setcounter{tocdepth}{1}
\tableofcontents
\section{Introduction}
Throughout this paper, all varieties will be over the complex numbers.
\par In \cite{PS14}, Popa and Schnell proposed the following
relative version of Fujita's conjecture:
\begin{citedconj}[{\cite[Conjecture 1.3]{PS14}}]\label{conj:ps14}
  Let $f\colon Y \to X$ be a morphism of smooth projective varieties, with $\dim
  X = n$, and let $\sL$ be an ample line bundle on $X$.
  For each $k \ge 1$, the sheaf
  \begin{equation*}
    f_*\omega_Y^{\otimes k} \otimes \sL^{\otimes \ell}
  \end{equation*}
  is globally generated for all $\ell \ge k(n+1)$.
\end{citedconj}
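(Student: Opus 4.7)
The plan is to combine the Angehrn--Siu effective base-point-freeness technique with a cyclic cover construction reducing pluricanonical sheaves to canonical ones, and then to invoke Kollár-type vanishing for higher direct images. By standard coherent sheaf arguments, it suffices to show that for every closed point $x \in X$ the evaluation
\[
H^0\bigl(X, f_*\omega_Y^{\otimes k} \otimes \sL^{\otimes \ell}\bigr) \longrightarrow \bigl(f_*\omega_Y^{\otimes k} \otimes \sL^{\otimes \ell}\bigr) \otimes \kappa(x)
\]
is surjective once $\ell \ge k(n+1)$. Via Leray and the projection formula this becomes a problem about lifting local sections of $\omega_Y^{\otimes k} \otimes f^*\sL^{\otimes \ell}$ near $f^{-1}(x)$ to global ones.

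For $k=1$, I would first construct on $X$, by the Angehrn--Siu technique adapted to the point $x$, an effective $\QQ$-divisor $D \sim_{\QQ} (n+1-\varepsilon)\sL$ whose multiplier ideal $\mathcal{J}(X,D)$ has $x$ as an isolated point of its co-support. Applying Kollár vanishing in the form $R^i f_*\bigl(\omega_Y \otimes \mathcal{J}(Y, f^*D)\bigr) = 0$ for $i > 0$ (after passing to a log resolution of $(X,D)$ and pulling back), combined with Kawamata--Viehweg on $X$, yields the required surjectivity whenever $\ell \ge n+1$. For $k \ge 2$, I would take a cyclic $k$-to-$1$ cover $\pi\colon Y' \to Y$ branched along a general member of $|\omega_Y^{\otimes k} \otimes f^*\sL^{\otimes km}|$ for suitable $m$; then $\omega_Y^{\otimes k} \otimes f^*\sL^{\otimes \ell}$ appears as a direct summand of $\pi_*\omega_{Y'} \otimes f^*\sL^{\otimes (\ell - (k-1)m)}$, and the $k=1$ argument applied to the composition $f \circ \pi \colon Y' \to X$ recovers the bound $\ell \ge k(n+1)$ after untwisting.

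The principal obstacle is maintaining the sharp Angehrn--Siu coefficient $n+1$ in the relative setting. In the absolute case, Angehrn--Siu currently produces a cutter only with coefficient of order $\binom{n+1}{2}$ rather than $n+1$; the sharp bound is essentially Fujita's freeness conjecture, itself open beyond low dimensions. A further complication is that a log resolution of $(X,D)$ must be chosen compatibly with $f$ so that the exceptional discrepancies introduced when passing to $Y$ do not exceed those on $X$; in general $f$ may have singular fibers or high relative dimension, and standard flattening or alteration techniques introduce extra ramification that degrades the relative multiplier ideal. Finally, the cyclic cover produces log-canonical rather than klt singularities, so one must upgrade to the Kollár--Ambro--Fujino vanishing for lc pairs. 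These are precisely the difficulties that force the paper's results to replace the conjectural bound $k(n+1)$ by a quadratic bound, or by a linear bound valid only on a dense open subset of $X$.
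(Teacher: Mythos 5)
This statement is Conjecture \ref{conj:ps14}, an open conjecture of Popa and Schnell; the paper does not prove it and does not claim to. It is quoted only as motivation, and the paper's actual results are strictly weaker: generic global generation with the quadratic bound $\ell \ge k(n^2+1)$ for $\sL$ big and nef (Theorem \ref{thm:sing}), and with the linear bound $\ell \ge k(n+1)+n^2-n$ for $\sL$ ample and $X$ smooth (Theorem \ref{thm:pluri}). The only case in which the sharp bound $k(n+1)$ is known in full is when $\sL$ is additionally assumed globally generated, which is Popa and Schnell's own theorem, or when $n \le 2$ (see Remark \ref{rem:lowdim}).

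Your proposal does not close this gap, and you say as much yourself: the entire weight of the argument rests on producing, for each point $x \in X$, an effective $\QQ$-divisor $D \sim_\QQ (n+1-\varepsilon)\sL$ whose multiplier ideal isolates $x$. That is precisely the content of Fujita's freeness conjecture on $X$, open beyond low dimensions; Angehrn--Siu only produces such a cutter with coefficient roughly $\binom{n+1}{2}$, which would give a quadratic, not linear, bound. So the first paragraph of your argument (reduction to surjectivity of evaluation maps, Koll\'ar vanishing for $f_*(\omega_Y \otimes f^*\text{(something)})$) is the routine part, and the step carrying all the difficulty is left as an appeal to an open conjecture. The cyclic-cover reduction from $k$ to $1$ also needs care: the branch divisor must be chosen inside the relative base locus decomposition $f^*f_*\cO_Y(P) \twoheadrightarrow \cO_Y(P-E)$ (this is how the paper handles $k \ge 2$ in Step 2 of the proofs of Theorems \ref{thm:sing} and \ref{thm:pluri}, via the divisor $\mathfrak{D}_x$ and Lemma \ref{lem:numeric}), and this already forces one to work away from the base locus $B$, i.e.\ only generically. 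If you want an honest result along these lines, the correct target is Theorem \ref{thm:pluri}, where the missing Fujita-type input is replaced by the Ein--K\"uchle--Lazarsfeld lower bound $\varepsilon(\sL;x) > 1/(n+\delta)$ at a general point (Theorem \ref{thm:ekl95}) together with the extension theorem (Theorem \ref{thm:dc}); that substitution is exactly what produces the extra $n^2-n$ in the bound and restricts the conclusion to a dense open subset of $X$.
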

Additionally assuming that $\sL$ is globally generated, Popa and Schnell proved
Conjecture \ref{conj:ps14} more generally for log canonical pairs $(Y,\Delta)$.
Previously, Deng \cite[Theorem C]{Den17} and the first author \cite[Proposition 1.2]{Dut17} have studied this conjecture for klt $\QQ$-pairs, 
and were able to remove the global generation assumption on $\sL$ to obtain generic effective generation statements.
In this paper, we obtain similar generic generation results, more generally for 
log canonical pairs $(Y,\Delta)$. 
\par First, when $X$ is arbitrarily singular and $\sL$ is only big and
nef, we obtain the following quadratic bound on $\ell$.
The case when $(Y,\Delta)$ is klt and $k = 1$ is due to de Cataldo \cite[Theorem
2.2]{dc98}.
\begin{alphtheorem}\label{thm:sing}
  Let $f\colon Y \to X$ be a surjective morphism of projective varieties where
  $X$ is of dimension $n$.
  Let $(Y,\Delta)$ be a log canonical $\RR$-pair and let $\sL$ be a big and nef
  line bundle on $X$.
  Consider a Cartier divisor $P$ on $Y$ such that $P \sim_\RR
  k(K_Y+\Delta)$ for some integer $k \ge 1$.
  Then, the sheaf
  \[
    f_*\cO_Y(P)\otimes_{\cO_X} \sL^{\otimes\ell}
  \]
  is generated by global sections on an open set $U$ for every integer $\ell \ge
  k(n^2 + 1)$.
\end{alphtheorem}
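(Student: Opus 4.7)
The plan is to reduce generic global generation at a general point $x_0\in X$ to a cohomology vanishing on $Y$, produce that vanishing by constructing a singular $\RR$-divisor in a suitable multiple of $\sL$ at $x_0$, pull it back through $f$, and apply a Fujino-style relative vanishing theorem for log canonical pairs. Concretely, I would fix an open $U \subseteq X$ disjoint from the images in $X$ of the non-klt centers of $(Y,\Delta)$, the non-flat locus of $f$, and the singular locus of $Y$, and choose a very general closed point $x_0 \in U$. Generic generation of $f_*\cO_Y(P) \otimes \sL^{\otimes\ell}$ at $x_0$ reduces, via the projection formula, to surjectivity of the evaluation map $H^0(Y, \cO_Y(P)\otimes f^*\sL^{\otimes\ell}) \twoheadrightarrow H^0(F_{x_0}, \cO_Y(P)\otimes f^*\sL^{\otimes\ell}|_{F_{x_0}})$, which in turn follows from an $H^1$-vanishing
\[
H^1\bigl(Y,\, \cO_Y(P) \otimes f^*\sL^{\otimes\ell} \otimes \mathcal{I}\bigr) = 0
\]
for an adjoint/multiplier ideal $\mathcal{I}$ on $Y$ whose cosupport over $U$ is precisely $F_{x_0}$.

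To produce $\mathcal{I}$, I would build an effective $\RR$-divisor $D$ on $X$ with $D \sim_\RR c\,\sL$ for some $c \le n^2+1$ and with sufficiently high multiplicity at $x_0$, such that $(X, D)$ is klt on $U\setminus\{x_0\}$ with an isolated non-klt center exactly at $x_0$. Because $\sL$ is merely big and nef, one cannot cut with sections of $\sL$ directly; instead the construction proceeds by an Angehrn--Siu/Nadel-type argument driven by Seshadri-style multiplicity estimates at very general points. On a possibly singular $X$, each cutting step costs a factor of $n$ rather than a constant, so isolating a point in an $n$-dimensional variety inflates $c$ from the linear $n+1$ of the globally generated ample case of Popa--Schnell to the quadratic $n^2+1$ of de Cataldo. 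Pulling back and scaling by $1/k$, the pair $(Y, \Delta + \tfrac{1}{k}f^*D)$ then has an isolated non-klt center along $F_{x_0}$ over $U$, and $\mathcal{I}$ is defined as its adjoint ideal.

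To conclude, I would use the relation $P \sim_\RR k(K_Y+\Delta)$ to rewrite
\[
\cO_Y(P) \otimes f^*\sL^{\otimes\ell} \;\sim_\RR\; K_Y+\Delta + \tfrac{1}{k}f^*D + \Bigl(\tfrac{k-1}{k}P + f^*\bigl(\sL^{\otimes\ell} - \tfrac{1}{k}D\bigr)\Bigr),
\]
so that the expression is adjoint to the log canonical pair $(Y, \Delta+\tfrac{1}{k}f^*D)$ plus a residual class that is big and nef when $\ell \ge k(n^2+1)$. The remainder $\tfrac{k-1}{k}P$ is handled either by a degree-$k$ cyclic cover trivializing the $\RR$-linear equivalence $P \sim_\RR k(K_Y+\Delta)$, so that Fujino's vanishing applies directly on the cover and descends via the splitting of $\pi_*\cO$, or by iterating the Nadel construction $k$ times; this is the origin of the linear-in-$k$ scaling of the bound. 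Fujino/Ambro vanishing for log canonical pairs then yields the required $H^1$-vanishing.

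The main obstacle is the multiplicity construction in the middle step: producing an $\RR$-divisor of high order of vanishing at a general point of a possibly singular $X$ when $\sL$ is only big and nef. This is precisely where the quadratic (rather than linear) constant $n^2+1$ comes from, and where the argument must diverge from the globally generated ample case of Popa--Schnell. A secondary but nontrivial difficulty is that $\mathcal{I}$ must be insulated from the LC centers of $(Y, \Delta)$, which forces the careful choice of $U$ above and requires a Fujino-type relative vanishing theorem tailored to LC (rather than klt) contributions.
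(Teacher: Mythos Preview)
Your architecture has two genuine gaps, and the more serious one is the treatment of the residual term $\tfrac{k-1}{k}P$ when $k>1$. For any Nadel/Fujino-type vanishing to apply you need this class, together with the leftover multiple of $f^*L$, to be big and nef; but $P\sim_\RR k(K_Y+\Delta)$ carries no positivity at all, since $K_Y+\Delta$ need not even be pseudoeffective. A cyclic cover does not rescue this: $P$ is only $\RR$-linearly (not linearly) equivalent to $k(K_Y+\Delta)$, so there is no cover trivialising the equivalence, and ``iterating $k$ times'' faces the same obstruction at every step. The paper resolves this by an entirely different mechanism. After a log resolution making the relative base locus of $\lvert P\rvert$ a divisor $E$, one lets $m$ be the \emph{smallest} integer for which $f_*\cO_Y(P)\otimes\sL^{\otimes m}$ is generically globally generated (such an $m$ exists by \cite{Kur13} once $x\notin\Bsp(L)$), chooses a general $\mathfrak D_x\in\lvert P-E+mf^*L\rvert$, and replaces $\tfrac{k-1}{k}P$ by the explicit simple normal crossings divisor $\tfrac{k-1}{k}(\mathfrak D_x+E)$ at the cost of $\tfrac{k-1}{k}m$ copies of $L$. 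One then applies the $k=1$ extension theorem (Theorem~\ref{thm:dc}), and a bootstrap on the minimality of $m$ yields $m\le k(n^2+1)$.

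The second gap is the construction of $D$ on $X$. The Angehrn--Siu cutting procedure needs $\sL$ ample and $X$ smooth to run the volume estimates and inversion of adjunction; for big and nef $\sL$ on a possibly singular $X$ there is no such statement, and in any case the Angehrn--Siu constant is $\binom{n+1}{2}+1$, not $n^2+1$. The paper (following de~Cataldo) never builds a divisor on $X$ with prescribed singularities. Instead, the quadratic constant arises from the generic Seshadri inequality $\varepsilon(\sL;x)>1/n$ of Ein--K\"uchle--Lazarsfeld: Theorem~\ref{thm:dc} shows that the restriction map to the fibre is surjective once $\ell>n/\varepsilon(\sL;x)$, by blowing up the fibre and invoking Fujino's cohomological injectivity (Theorem~\ref{thm:fujinonj}) rather than a Nadel-type vanishing. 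Combining $n/\varepsilon(\sL;x)<n\cdot n$ with the $\tfrac{k-1}{k}m$ cost from the previous paragraph is what produces $k(n^2+1)$.
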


\par On the other hand, we have the following linear bound when $X$ is smooth
and $\sL$ is ample.
The statement in $(\ref{thm:e1})$ extends \cite[Theorem C]{Den17} to log
canonical pairs.
As we were writing this, we learned that a statement similar to $(\ref{thm:e2})$
was also obtained by Iwai \cite[Theorem 1.5]{Iwa17}.
\begin{alphtheorem}\label{thm:pluri}
  Let $f\colon Y \to X$ be a fibration of projective varieties where
  $X$ is smooth of dimension $n$.
  Let $(Y,\Delta)$ be a log canonical $\RR$-pair and let $\sL$ be an ample
  line bundle on $X$.
  Consider a Cartier divisor $P$ on $Y$ such that $P \sim_\RR
  k(K_Y+\Delta)$ for some integer $k \ge 1$.
  Then, the sheaf
  \[
    f_*\cO_Y(P)\otimes_{\cO_X} \sL^{\otimes\ell}
  \]
  is globally generated on an open set $U$ for
  \begin{enumerate}[label=$(\roman*)$,ref=\roman*]
    \item every integer $\ell \ge k(n+1) + n^2-n$; and\label{thm:e1}
    \item every integer $\ell > k(n+1) + \frac{n^2-n}{2}$ when $(Y,\Delta)$ is
      a klt $\QQ$-pair.\label{thm:e2}
  \end{enumerate}
\end{alphtheorem}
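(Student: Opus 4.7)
I would follow the Angehrn--Siu/Koll\'ar--Nadel strategy that underlies the analogous results of Deng \cite{Den17} and the first author \cite{Dut17} in the klt case, and extend it to the log canonical setting by invoking the twisted weak positivity theorem established earlier in the paper. The basic idea is, at a general point $x$ of a suitable open $U \subseteq X$, to write down an $\RR$-divisor $D$ on $X$ whose multiplier ideal $\mathcal{J}(X,D)$ cuts out $x$, and then to use that ideal as an auxiliary twist to force the desired global generation via cohomological vanishing.

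\textbf{Main steps.} First, via a standard Castelnuovo--Mumford regularity argument with respect to the ample bundle $\sL$, I would reduce generic global generation of $f_*\cO_Y(P) \otimes \sL^{\otimes \ell}$ at $x$ to a vanishing of the form
\[
  H^1\bigl(X, f_*\cO_Y(P) \otimes \sL^{\otimes \ell} \otimes \mathcal{J}(X,D)\bigr) = 0,
\]
where $D$ is to be constructed. Second, since $X$ is smooth and $\sL$ is ample, I would produce $D \sim_\RR \lambda \sL$ with $\mathcal{J}(X,D) \subseteq \mathfrak{m}_x$ at a general $x$ using the Angehrn--Siu cutting-down procedure together with Seshadri-constant lower bounds at general points on a smooth projective variety; this is the step that yields the quadratic coefficient $n^2 - n$ (resp.\ $(n^2 - n)/2$), the klt case benefiting from a strict-versus-non-strict inequality on the relevant log canonical thresholds that halves the required multiplicity. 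Third, I would verify the vanishing by forming the pair $(Y, \Delta + \tfrac{1}{k} f^* D)$, applying Koll\'ar--Ambro--Fujino vanishing on a log resolution, and pushing forward to $X$; the twisted weak positivity result of the paper is what allows this argument to run when $\sL$ is merely ample rather than globally generated, and it is what pins down the open set $U$ on which generation can be deduced.

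\textbf{Main obstacle.} The principal difficulty is the interaction between the log canonical singularities of $(Y, \Delta)$ and the pullback $f^* D$: for the Koll\'ar-type vanishing to apply, $(Y, \Delta + \tfrac{1}{k} f^* D)$ must remain log canonical away from $f^{-1}(x)$, which is delicate because in the lc (as opposed to klt) regime there is no slack to absorb the perturbation. I expect this to be handled either by approximating $\Delta$ by klt boundaries and taking limits, or by invoking the twisted weak positivity statement in a form that already accommodates lc singularities; the latter is precisely why the effective weak positivity theorem is proved earlier in the paper in the lc setting rather than only the klt one. A secondary bookkeeping challenge is ensuring that $U$ lies simultaneously in the loci where weak positivity holds, where $f$ is suitably flat, and where the Angehrn--Siu/Seshadri input at general points is valid, so that all three inputs can be combined coherently. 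Finally, squeezing exactly the coefficients $n^2 - n$ and $(n^2 - n)/2$ out of the construction, rather than something larger, will require using the sharpest available multiplicity estimate at a general point.
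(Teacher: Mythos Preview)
Your proposal diverges from the paper's proof in two essential ways, and for the log canonical case $(\ref{thm:e1})$ the gap you flag as the ``principal difficulty'' is genuine and not resolved by either of your suggested fixes.

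\textbf{Role of weak positivity.} In the paper, twisted weak positivity is not used to feed a vanishing theorem or to accommodate lc singularities in a multiplier-ideal argument. Instead, it is used to produce an auxiliary divisor \emph{on $Y$}: global generation of $\bigl(f_*\cO_Y(k(K_{Y/X}+\Delta))\bigr)^{[b]}\otimes\sL^{\otimes b}$ yields $D\in\lvert bk(K_{Y/X}+\Delta)-bE+bf^*L\rvert$ with good Bertini properties over the open set $U$. Taking $\tfrac{k-1}{kb}D$ then reduces $P\sim_\RR k(K_Y+\Delta)$ to a single $K_Y$ plus an snc boundary with coefficients in $(0,1]$, at the cost of a twist by $(k-1)f^*H+(\ell-\tfrac{k-1}{k}-(k-1)(n+1))f^*L$ where $H=K_X+(n+1)L$. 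No divisor is ever built on $X$, and no multiplier ideal $\mathcal{J}(X,D)$ enters.

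\textbf{The lc case.} After the reduction above, the paper does not invoke Nadel/Koll\'ar vanishing. It applies the extension theorem (Theorem~\ref{thm:dc}), whose engine is Fujino's cohomological injectivity for snc pairs with coefficients in $(0,1]$; this is what makes the argument go through in the log canonical regime without any slack. The bound $\ell\ge k(n+1)+n^2-n$ then comes from the Seshadri inequality $\ell-\tfrac{k-1}{k}-(k-1)(n+1)>n/\varepsilon(\sL;x)$ combined with the generic lower bound $\varepsilon(\sL;x)>\bigl(n+\tfrac{1}{n(k+1)}\bigr)^{-1}$ from \cite{EKL95}. Your Angehrn--Siu construction on $X$ would force you to control $(Y,\Delta+\tfrac{1}{k}f^*D)$, and neither approximating $\Delta$ by klt boundaries (which disturbs the fixed linear equivalence $P\sim_\RR k(K_Y+\Delta)$) nor appealing to weak positivity addresses this; the paper sidesteps the issue entirely by working on $Y$ and using injectivity rather than vanishing.

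\textbf{The klt case.} Here your outline is closer to the truth: after the same reduction, the paper invokes \cite[Proposition~1.2]{Dut17} in place of Theorem~\ref{thm:dc}, and that proposition is indeed in the Angehrn--Siu spirit, yielding the improved coefficient $\tfrac{n^2-n}{2}$.
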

Here, a \textsl{fibration} is a morphism whose generic fiber is irreducible.
\par In both Theorems \ref{thm:sing} and \ref{thm:pluri}, when $Y$ is smooth and
$\Delta$ has simple normal crossings support, we have explicit descriptions of
the open set $U$. See Remark \ref{rmk:loci}.
Thus, we have descriptions of the loci where global generation holds up
to a log resolution.
\par When $X$ is smooth of dimension $\leq$ 3 and $\sL$ is ample, the bound on $\ell$ can be
improved. This gives the predicted bound in Conjecture \ref{conj:ps14} for
surfaces; see Remark
\ref{rem:lowdim}.

\begin{remark}[Effective non-vanishing]
Theorems \ref{thm:sing} and \ref{thm:pluri} can be interpreted as effective non-vanishing statements. With
notation as in the theorems, it follows that $f_*\cO_Y(P)\otimes \sL^{\otimes \ell}$ admits  
global sections for all $\ell \ge k(n^2+1)$ when $\sL$ is big and nef, and for all $\ell \ge k(n+1)+n^2-n$ 
when $\sL$ is ample and $X$ is smooth. Moreover, just as in Theorem \ref{thm:pluri}$(\ref{thm:e2})$, the effective bound of the second non-vanishing statement
can be improved in case $(Y,\Delta)$ is a klt $\QQ$-pair.
\end{remark}
We now state the technical results used in proving Theorems \ref{thm:sing} and
\ref{thm:pluri}.
\subsection*{An extension theorem}
Recall that if $\mu\colon X' \to X$ is the blow-up of a projective variety $X$ at
$x$ with exceptional divisor $E$, then the \textsl{Seshadri constant} of a nef
Cartier divisor $L$ at $x$ is
\[
  \varepsilon(L;x) \coloneqq \sup\bigl\{t \in \RR_{\ge 0} \bigm\vert
  \mu^*L-tE\ \text{is nef}\bigr\}.
\]
The following replaces the role of Deng's extension theorem
\cite[Theorem 2.11]{Den17} in our proofs.

\begin{alphtheorem}\label{thm:dc}
  Let $f\colon Y \to X$ be a surjective morphism of projective varieties, where
  $X$ is of dimension $n$ and $Y$ is smooth.
  Let $\Delta$ be an $\RR$-divisor on $Y$ with simple normal crossings support
  and coefficients in $(0,1]$,
  and let $L$ be a big and nef $\QQ$-Cartier $\QQ$-divisor on $X$.
  Suppose there exists a closed point $x \in U(f,\Delta)$ and
  a real number $\ell > \frac{n}{\varepsilon(L;x)}$ such that
  \[
    P_\ell \sim_\RR K_Y+\Delta+\ell f^*L
  \]
  for some Cartier divisor $P_\ell$ on $Y$.
  Then, the restriction map
  \begin{equation}\label{eq:thmcrest}
    H^0\bigl(Y,\cO_Y(P_\ell)\bigr) \longrightarrow
    H^0\bigl(Y_x,\cO_{Y_x}(P_\ell) \bigr)
  \end{equation}
  is surjective, and the sheaf $f_*\cO_Y(P_\ell)$
  is globally generated at $x$.
\end{alphtheorem}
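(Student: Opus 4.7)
The plan is a multiplier-ideal extension argument driven by the Seshadri hypothesis $\ell > n/\varepsilon(L;x)$, in the spirit of de Cataldo's approach in \cite{dc98}. First, I would choose a rational number $c$ with $n/\varepsilon(L;x) < c < \ell$, which is possible by hypothesis and gives $\varepsilon(cL;x) > n$. A standard Seshadri-constant construction—finding an effective divisor on the blow-up of $X$ at $x$ that is negative along the exceptional divisor and pushing it down, combined with a small tie-breaking perturbation—then produces an effective $\QQ$-Cartier $\QQ$-divisor $D_X \sim_\QQ cL$ on $X$ whose multiplier ideal $\mathcal{J}(X, D_X)$ is cosupported at $\{x\}$, with $x$ the unique non-klt center of $(X, D_X)$ in a neighborhood.

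Next, I would pull back and combine with $\Delta$. Setting $D_Y := f^*D_X$, the hypothesis $x \in U(f,\Delta)$ ensures that $f$ is smooth and $\Delta$ is in sufficiently general position with the fiber $Y_x$ over a neighborhood of $x$. A restriction theorem for multiplier (or adjoint) ideals under smooth pullback, possibly together with a further tie-breaking perturbation, then allows one to arrange that an appropriate ideal $\mathcal{J} \subseteq \mathcal{J}(Y, \Delta + D_Y)$ agrees with the ideal sheaf $\mathcal{I}_{Y_x}$ of the fiber in a neighborhood of $Y_x$, giving a short exact sequence
\[
  0 \longrightarrow \mathcal{J} \longrightarrow \cO_Y \longrightarrow \cO_{Y_x} \longrightarrow 0
\]
locally around $Y_x$.

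Finally, observe that $P_\ell - (K_Y + \Delta + D_Y) \sim_\RR (\ell - c)f^*L$ is the pullback of a big and nef $\QQ$-divisor on $X$. A Nadel-type vanishing theorem for such pullbacks—obtained by combining relative Kawamata--Viehweg vanishing with Kawamata--Viehweg on $X$ via the Leray spectral sequence—then yields $H^1(Y, \cO_Y(P_\ell) \otimes \mathcal{J}) = 0$. Twisting the displayed exact sequence by $\cO_Y(P_\ell)$ and passing to cohomology produces the surjection \eqref{eq:thmcrest}. Global generation of $f_*\cO_Y(P_\ell)$ at $x$ then follows via cohomology and base change, since $f$ is smooth over a neighborhood of $x$ and hence $f_*\cO_Y(P_\ell) \otimes \kappa(x) \cong H^0(Y_x, \cO_{Y_x}(P_\ell))$.

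The main obstacle is the tie-breaking step: arranging that the non-klt locus of $(Y, \Delta + D_Y)$ near $Y_x$ is exactly $Y_x$—rather than a smaller or larger subscheme—requires care, especially since $x$ may be a singular point of $X$ and $\Delta$ may have components of coefficient~$1$. The hypothesis $x \in U(f, \Delta)$ is the essential geometric ingredient that makes this feasible, and a secondary subtlety lies in selecting the correct version of Nadel vanishing compatible with pullbacks from the base.
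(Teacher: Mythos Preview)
Your approach is essentially de Cataldo's, and it would prove the klt case (coefficients in $(0,1)$), which the paper credits to \cite{dc98}. The genuine gap is the log canonical case: when $\Delta$ has a component of coefficient~$1$, the multiplier ideal $\mathcal{J}(Y,\Delta+D_Y)$ already vanishes along that component, so it cannot be arranged to equal $\mathcal{I}_{Y_x}$ near $Y_x$, and Nadel vanishing is not available for the pair you need. You flag this difficulty at the end but offer no mechanism to resolve it; tie-breaking perturbations cannot remove a coefficient-$1$ boundary component from the non-klt locus.

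The paper's proof avoids multiplier ideals entirely. It blows up $X$ at $x$ and $Y$ along $Y_x$, then uses the Seshadri hypothesis via Lemma~\ref{lem:nakamaye} to produce, on the blow-up, a divisor $\mathfrak{D}_x$ in a linear system whose base locus misses the exceptional divisor. After a further log resolution one obtains a simple normal crossings boundary $\widetilde{\Delta}$ with coefficients in $(0,1]$ whose support contains the pullback of the exceptional divisor. The key input is then Fujino's cohomological injectivity theorem (Theorem~\ref{thm:fujinonj}), which for such pairs gives an injection on $H^1$ when one adds an effective divisor supported in $\Supp\widetilde{\Delta}$; this replaces the Nadel-vanishing step and is what makes the log canonical case go through. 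The passage from surjectivity of \eqref{eq:thmcrest} to global generation is handled as you suggest, by cohomology and base change (Lemma~\ref{lem:cohbasechangeapp}).
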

See Notation \ref{notn:goodopen}$(\ref{notn:goodopena})$ for the definition of
the open set $U(f,\Delta)$.
\begin{remark}[Comments on the proofs]
   The proofs of Theorems \ref{thm:sing} and
\ref{thm:pluri}$(\ref{thm:e1})$ are in a way an algebraization of Deng's techniques,
exploiting a generic lower bound for Seshadri constants due to
Ein, K\"uchle, and Lazarsfeld (Theorem \ref{thm:ekl95}). In the algebraic setting, this
lower bound was first used by de Cataldo to
prove a version of Theorem \ref{thm:sing} for klt pairs when $k = 1$.
One of our main challenges was to extend de Cataldo's theorem to the log canonical case (see Theorem \ref{thm:dc} below).
\par   To obtain the better bound in Theorem \ref{thm:pluri}$(\ref{thm:e2})$ for klt $\QQ$-pairs, we use
  \cite[Proposition 1.2]{Dut17} instead of Seshadri constants.
\par In Theorems \ref{thm:sing} and \ref{thm:dc}, in order to work with line
bundles $\sL$ that are big and nef instead of ample, we needed to study the
augmented base locus $\Bsp(\sL)$ of $\sL$ (see Definition \ref{def:baseloci}).
We used Birkar's generalization of Nakamaye's theorem \cite[Theorem 1.4]{Bir17} and a result by
K\"uronya \cite[Proposition 2.7]{Kur13} which capture how $\sL$ fails to be
ample.
\par The proof of Theorem \ref{thm:dc} relies on a cohomological injectivity theorem
due to Fujino \cite[Theorem 5.4.1]{Fuj17}.
If $(Y,\Delta)$ is replaced by an arbitrary log canonical $\RR$-pair, then the
global generation statement in Theorem \ref{thm:dc} still holds over some open
set (Corollary \ref{cor:dclcsing}).
\end{remark}
\begin{remark}[Effective vanishing]
	With the new input of weak positivity, which is discussed next, we give some effective
	vanishing statements for certain cases of such pushforwards under smooth morphisms (see Theorem \ref{thm:effvanish}). This improves similar statements 
	in \cite{Dut17} and is in the spirit of \cite[Proposition 3.1]{PS14}, where Popa and Schnell showed a similar statement
	with the assumption that $\sL$ is ample and globally generated.
	\end{remark}
\subsection*{Effective twisted weak positivity} In order to prove Theorem \ref{thm:pluri}, we also use the following
weak positivity result for log canonical pairs. This may be of independent
interest.
\par In this setting, weak positivity was partially known due to
Campana \cite[Theorem 4.13]{Cam04}, and later more generally due to Fujino \cite[Theorem 1.1]{Fuj15}, 
but using a slightly
weaker notion of weak positivity (see \cite[Definition 7.3]{Fuj15} and the
comments thereafter). Our result extends these results.

\begin{alphtheorem}[Twisted Weak Positivity]\label{thm:wp}
  Let $f\colon Y\to X$ be a fibration of normal projective varieties such that
  $X$ is Gorenstein of dimension $n$.
  Let $\Delta$ be an $\RR$-Cartier $\RR$-divisor on $Y$ such that
  $(Y, \Delta)$ is log canonical and $k(K_{Y}+\Delta)$ is $\RR$-linearly
  equivalent to a Cartier divisor for some integer $k \ge 1$.
  Then, the sheaf
  \[
    f_*\cO_Y\bigl(k(K_{Y/X}+\Delta)\bigr)
  \]
  is weakly positive.
\end{alphtheorem}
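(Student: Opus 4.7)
\par The plan is to combine Viehweg's fiber product trick with the effective generic generation of Theorem~\ref{thm:sing}. Fix an ample line bundle $H$ on $X$ and an integer $\alpha \ge 1$, and write $\sF \coloneqq f_*\cO_Y(k(K_{Y/X}+\Delta))$, which is torsion-free on $X$ because $f$ is a fibration. Let $U \subseteq X$ be a dense open subset over which $\sF$ is locally free and over which $f$ is suitably log-smooth. For each $s \ge 1$, form the $s$-fold fibered product $Y^{(s)} = Y \times_X \cdots \times_X Y$, pass to a log resolution $\pi\colon \widetilde{Y}^{(s)} \to Y^{(s)}$ carrying a boundary $\widetilde{\Delta}^{(s)}$ that makes $(\widetilde{Y}^{(s)}, \widetilde{\Delta}^{(s)})$ log canonical with simple normal crossings support, and let $\widetilde{f}^{(s)}\colon \widetilde{Y}^{(s)} \to X$ be the induced fibration.

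\par The next step, which is the technical heart of the argument, is to establish that over $U$ the reflexive symmetric power $\hat{S}^s(\sF)$ appears as a direct summand of $\widetilde{f}^{(s)}_* \cO_{\widetilde Y^{(s)}}(k(K_{\widetilde{Y}^{(s)}/X} + \widetilde{\Delta}^{(s)}))$. In the klt case this is Viehweg's original argument via cyclic covers. For log canonical pairs, the naive boundary on the fiber product need not even be log canonical, so one must follow Fujino's strategy \cite{Fuj15}: take a sufficiently nice log resolution and compare discrepancies of the exceptional divisors of $\pi$ with the coefficients of $\widetilde{\Delta}^{(s)}$. The Gorenstein hypothesis on $X$ is essential here, since it guarantees that $K_{Y^{(s)}/X} = K_{Y^{(s)}} - f^{(s)\,*}K_X$ is $\QQ$-Cartier and behaves well under refinement.

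\par With this direct summand relation in hand, apply Theorem~\ref{thm:sing} to $\widetilde{f}^{(s)}$ with the big and nef line bundle $H$ and a Cartier divisor $P^{(s)} \sim_\RR k(K_{\widetilde{Y}^{(s)}} + \widetilde{\Delta}^{(s)})$. This gives that $\widetilde{f}^{(s)}_* \cO(P^{(s)}) \otimes H^{\otimes \ell}$ is generated by global sections on an open subset of $X$ for every $\ell \ge k(n^2+1)$, a bound that is independent of $s$. Untwisting by $\omega_X^{\otimes k}$, the direct summand then yields that $\hat{S}^s(\sF) \otimes \omega_X^{\otimes k} \otimes H^{\otimes \ell}$ is generated by global sections over $U$. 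Finally, pick $m_0$ such that $H^{\otimes m} \otimes \omega_X^{\otimes -k}$ is globally generated for all $m \ge m_0$; taking $s = \alpha\beta$ with $\beta \ge k(n^2+1) + m_0$ then gives generic global generation of $\hat{S}^{\alpha\beta}(\sF) \otimes H^{\otimes \beta}$, which is exactly Viehweg's criterion for weak positivity on $U$.

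\par The main obstacle is the direct summand assertion in the second paragraph: fibered products of log canonical pairs are typically not log canonical, so both the existence of a log resolution realizing $\hat{S}^s(\sF)$ as a summand and the discrepancy comparison have to be carried out carefully, and this is the point at which the Gorenstein hypothesis on $X$ becomes indispensable. Everything else reduces to Viehweg's formal package or to a direct invocation of Theorem~\ref{thm:sing}.
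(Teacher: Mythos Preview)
Your overall plan---Viehweg's fiber product trick combined with an effective generic generation bound independent of $s$---is the paper's strategy. But the paper does not route through Theorem~\ref{thm:sing}. It first proves the effective statement Theorem~\ref{thm:wpnc} (resting on Theorem~\ref{thm:ps14}, a variant of Popa--Schnell that allows an arbitrary vertical part in the boundary), and then deduces Theorem~\ref{thm:wp} in a few lines by absorbing the twist $H^{\otimes k}=(\omega_X\otimes\sL^{\otimes n+1})^{\otimes k}$ into a power of an arbitrary ample line bundle.

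This difference matters in two places. First, Theorem~\ref{thm:ps14} gives generation over the explicit set $X\smallsetminus f(\Supp(\Delta^v))$, so once one checks that the vertical part of the boundary on $\widetilde{Y}^s$ maps into $X\smallsetminus U$ (with $U$ the fixed set from Theorem~\ref{thm:wpnc}), uniformity in $s$ is automatic. Your appeal to Theorem~\ref{thm:sing} only yields generation on \emph{some} open set, and its description (Remark~\ref{rmk:loci}) involves $U(\widetilde f^{(s)},\widetilde\Delta^{(s)}+E^{(s)})$ with $E^{(s)}$ the relative base divisor of $P^{(s)}$; you do not explain why these sets, which a priori vary with $s$, all contain the pre-chosen $U$. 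The dependence on the relative base locus makes this genuinely nontrivial.

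Second, because Theorem~\ref{thm:ps14} tolerates an arbitrary vertical boundary, the paper never forces $(\widetilde Y^{(s)},\widetilde\Delta^{(s)})$ to be log canonical. The boundary on the resolved fiber product is simply the total pullback $\mu^*\sum_i d_i^*\Delta$; its horizontal part automatically has coefficients in $(0,1]$ by transversality over $U$, and whatever happens vertically is absorbed. The Fujino-style discrepancy comparison you invoke is therefore bypassed entirely. The technical substitute is Lemma~\ref{lem:pluritr} (a pluri-trace map for proper birational morphisms onto a Gorenstein target), which produces a map from the fiber-product pushforward to $\bigl(f_*\cO_Y(k(K_{Y/X}+\Delta))\bigr)^{[s]}$ that is an \emph{isomorphism} over $U$; this replaces your direct-summand claim and is precisely where the Gorenstein hypothesis on $X$ is used.
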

\par Recall that a torsion-free coherent sheaf $\sF$ is \textsl{weakly positive} if there exists a non-empty 
open set $U$ such that for every integer $a$, there is an integer $b\ge 1$ such that
\[\Sym^{[ab]}\sF \otimes H^{\otimes b}\]
is generated by global sections on $U$ for all ample line bundles $H$. Here, $\bullet^{[s]}$ is the
reflexive hull of $\bullet^{s}$ (see Notation \ref{notn:wp}).
\par In \cite[Theorem 4.2]{PS14}, Popa and Schnell showed that if $\Delta = 0$, the morphism $f$ has
generically reduced fibers in codimension 1, and $H = \omega_X\otimes \sL^{\otimes n+1}$ 
with $\sL$ ample and globally generated, then weak positivity in Theorem \ref{thm:wp} holds over $U(f,0)$ 
for all $b\ge k$. In a similar spirit, we prove the following ``effective'' version of twisted weak positivity 
when $Y$ is smooth and $\Delta$ has
simple normal crossings support.
Moreover, Theorem \ref{thm:wp} is deduced from 
this result and therefore we also obtain an
explicit description, up to a log resolution, of the locus over which weak positivity holds. 
This extends \cite[Theorem 4.2]{PS14} to arbitrary fibrations. 
\begin{alphtheorem}[Effective Weak Positivity]\label{thm:wpnc}
  Let $f\colon Y\to X$ be a fibration of projective varieties where $Y$ is
  smooth and $X$ is normal and Gorenstein of dimension $n$.
  Let $\Delta$ be an $\RR$-divisor on $Y$ with simple normal crossings support
  and with coefficients of $\Delta^h$ in $(0,1]$.
  Consider a Cartier divisor $P$ on $Y$ such that
  $P \sim_\RR k(K_Y + \Delta)$ for some integer $k \ge 1$.
  Let $U$ be the intersection of $U(f,\Delta)$ with the largest
  open set over which $f_*\cO_Y(P)$ is locally free, and
  let $H = \omega_X\otimes \sL^{\otimes n+1}$ for $\sL$ an ample
  and globally generated line bundle on $X$.
  Then, the sheaf
  \[
    \bigl(f_*\cO_Y\bigl(k(K_{Y/X}+\Delta)\bigr)\bigr)^{[s]}\otimes H^{\otimes
    \ell}
  \]
  is generated by global sections on $U$ for all integers $\ell\geq k$ and $s\ge
  1$.
\end{alphtheorem}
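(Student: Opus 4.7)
The plan is to apply Viehweg's fiber product trick, following Popa--Schnell's proof of \cite[Theorem 4.2]{PS14}, with Theorem \ref{thm:dc} serving as the effective generation input for log canonical pairs.

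First, form the $s$-fold fiber product $Y^{(s)} := Y \times_X \cdots \times_X Y$ with structure morphism $f^{(s)}\colon Y^{(s)} \to X$. Take a log resolution $\tau\colon \tilde{Y}^{(s)} \to Y^{(s)}$ that is an isomorphism over $U$, and set $g := f^{(s)} \circ \tau$. On $\tilde{Y}^{(s)}$, construct an effective boundary $\tilde{\Delta}$ with simple normal crossings support and horizontal coefficients in $(0,1]$ so that $(\tilde{Y}^{(s)}, \tilde{\Delta})$ is log canonical and $U(g,\tilde{\Delta}) \supseteq U$, together with a Cartier divisor $\tilde{P} \sim_\RR k(K_{\tilde{Y}^{(s)}/X} + \tilde{\Delta})$ whose restriction over $U$ coincides with $\sum_{i=1}^s p_i^*\bigl(k(K_{Y/X}+\Delta)\bigr)$. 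Flat base change on $U$, followed by symmetrization, then yields a surjection
\[
  g_*\cO_{\tilde{Y}^{(s)}}(\tilde{P}) \bigr|_U \longtwoheadrightarrow \bigl(f_*\cO_Y\bigl(k(K_{Y/X}+\Delta)\bigr)\bigr)^{[s]} \bigr|_U.
\]

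It therefore suffices to show that $g_*\cO_{\tilde{Y}^{(s)}}(\tilde{P}) \otimes H^\ell$ is globally generated on $U$ for every $\ell \geq k$. For this I would apply Theorem \ref{thm:dc} to $g$ with $L = \sL$, using the Seshadri bound $\varepsilon(\sL;x) \geq 1$, which holds at every point since $\sL$ is ample and globally generated. The key computation is
\[
  \tilde{P} + \ell g^*H \sim_\RR k K_{\tilde{Y}^{(s)}} + k\tilde{\Delta} + (\ell-k) g^*K_X + \ell(n+1) g^*\sL,
\]
which must be rewritten in the single-canonical form $K_{\tilde{Y}^{(s)}} + \Gamma + \ell_0 g^*\sL$ required by Theorem \ref{thm:dc}. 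I would accomplish this via two maneuvers: first, use the $k$-canonical identity $k(K_{\tilde{Y}^{(s)}} + \tilde{\Delta}) \sim_\RR \tilde{P} + k g^*K_X$ to trade $(k-1)(K_{\tilde{Y}^{(s)}}+\tilde{\Delta})$ for a combination of $\tilde{P}$ and $g^*K_X$ terms; second, apply Bertini on $|g^*\sL|$ — available because $\sL$ is globally generated — to absorb the remaining excess into $\Gamma$ as SNC boundary components with unit coefficients. The inequality $\ell \geq k$ is exactly the threshold that leaves $\ell_0 > n$, so Theorem \ref{thm:dc} applies.

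The main obstacle is this delicate combinatorial adjustment in the second step: one must simultaneously convert the $k$-fold pluricanonical $k(K + \tilde{\Delta})$ into a single-canonical log-canonical boundary, cancel the $(\ell-k) g^*K_X$ shift inherent to $H = \omega_X \otimes \sL^{n+1}$, and retain enough of the $\ell(n+1) g^*\sL$ term to exceed the Seshadri bound, all while keeping the boundary coefficients in $(0,1]$ and preserving the open set $U$. The global generation of $\sL$ provides the Bertini flexibility needed for the coefficient control, and $\ell \geq k$ is the sharp numerical threshold for the arithmetic to close. Composing the resulting generation of $g_*\cO_{\tilde{Y}^{(s)}}(\tilde{P}) \otimes H^\ell$ with the surjection from the first step yields the desired global generation of $\bigl(f_*\cO_Y(k(K_{Y/X}+\Delta))\bigr)^{[s]} \otimes H^\ell$ on $U$.
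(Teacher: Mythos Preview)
Your fiber-product setup and the comparison map to $(f_*\cO_Y(k(K_{Y/X}+\Delta)))^{[s]}$ are essentially the same as the paper's (the paper in fact shows this map is an \emph{isomorphism} over $U$, via a pluri-trace construction and reflexivity, but a surjection over $U$ would suffice). The divergence, and the genuine gap, is in the generation step.

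The paper does \emph{not} use Theorem~\ref{thm:dc} here. Instead it rewrites
\[
  g_*\cO_{\widetilde{Y}^s}\bigl(k(K_{\widetilde{Y}^s/X}+\widetilde{\Delta})\bigr)\otimes H^{\otimes\ell}
  \;\simeq\;
  g_*\cO_{\widetilde{Y}^s}\Bigl(k\bigl(K_{\widetilde{Y}^s}+\widetilde{\Delta}+\tfrac{\ell-k}{k}\,g^*H\bigr)\Bigr)\otimes \sL^{\otimes k(n+1)},
\]
observes that $H=K_X+(n+1)L$ is semiample (cone theorem plus basepoint-free theorem), and then applies Theorem~\ref{thm:ps14} (i.e.\ \cite[Variant~1.6]{PS14}) directly in its $k$-pluricanonical form with the semiample twist $\tfrac{\ell-k}{k}g^*H$ and the ample globally generated $\sL$, giving generation for the exponent $k(n+1)$ on $\sL$.

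Your route via Theorem~\ref{thm:dc} breaks down because that theorem is a \emph{single}-canonical statement: it needs the form $K_{\widetilde{Y}^s}+\Gamma+\ell_0\,g^*L$ with $\Gamma$ an honest effective SNC divisor with coefficients in $(0,1]$. Your ``first maneuver'' produces a term $\tfrac{k-1}{k}\tilde{P}$ on the right-hand side, but $\tilde{P}$ is only a Cartier divisor class, not an effective SNC divisor; replacing it by a suitable member of $|\tilde{P}+\text{(twist)}|$ is exactly what requires global generation of $g_*\cO(\tilde{P})$ twisted by something, which is what you are trying to prove. Your ``second maneuver'' (Bertini on $|g^*\sL|$) can only absorb multiples of $g^*L$ into $\Gamma$; it does nothing to the $\tfrac{k-1}{k}\tilde{P}$ term, nor to the stray $g^*K_X$ terms, and $K_X$ carries no positivity on its own. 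The paper avoids both problems at once: packaging $K_X$ into the semiample $H$ handles the second, and invoking \cite[Variant~1.6]{PS14} as a black box handles the pluricanonical-to-single-canonical reduction internally (that reduction, inside Popa--Schnell's proof, is itself an inductive/minimization argument that genuinely uses the global generation of $\sL$).
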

Here, $\Delta^h$ is the \textsl{horizontal part} of $\Delta$ (see Notation
\ref{notn:goodopen}$(\ref{notn:goodopenb})$). 
\par When $\lfloor\Delta\rfloor=0$, one can, in a way, get rid of the assumption that
$f_*\cO_Y(P)$ is locally free on $U$
using invariance of log plurigenera \cite[Theorem 4.2]{HMX};
see Remark \ref{rem:hmx}.
\par
The proof of Theorem \ref{thm:wpnc} relies on Viehweg's fiber product trick
(see \cite[\S3]{Vie83}, \cite[Theorem 4.2]{PS14}, or \cite[\S3]{Horing} for an exposition).
\subsection*{Acknowledgments}
We would like to thank our advisors Mihnea Popa and Mircea Musta\c{t}\u{a} for their unconditional support, 
many enlightening conversations, and useful comments. We are also grateful to Karl Schwede for helpful discussions regarding Grothendieck duality theory. 
Finally, we would like to thank Lawrence Ein, Mihai Fulger, Emanuel Reinecke and the anonymous referee for some insightful
comments. 

\section{Definitions and Preliminary Results}
In this section, we discuss some definitions and preliminary results.
Throughout this paper, a \textsl{variety} is an integral separated scheme of
finite type over the complex numbers.
We will also fix the following notation:
\begin{notation}\label{notn:goodopen}
  Let $f\colon Y \to X$ be a morphism of projective varieties, where $Y$ is
  smooth, and let $\Delta$ be an $\RR$-divisor with simple normal crossings
  support on $Y$.
  \begin{enumerate}[label=$(\alph*)$,ref=\alph*]
    \item We denote by $U(f,\Delta)$ the largest open subset of $X$ such that
      \begin{itemize}
        \item $U(f,\Delta)$ is contained in the smooth locus $X_{\reg}$ of $X$;
        \item $f\colon f^{-1}(U(f,\Delta)) \to U(f,\Delta)$ is smooth; and
        \item The fibers $Y_x \coloneqq f^{-1}(x)$ intersect each component of
          $\Delta$ transversely for all closed points $x \in U(f,\Delta)$.
      \end{itemize}
      This open set $U(f,\Delta)$ is non-empty by generic smoothness; see
      \cite[Corollary III.10.7]{Har77} and \cite[Lemma 4.1.11]{Laz04a}.
      \label{notn:goodopena}
    \item\label{notn:goodopenb} We write
      \[
        \Delta = \Delta^v+\Delta^h,
      \]
      where $\Delta^v$ and $\Delta^h$ do not share any components, such that
      \begin{itemize}
        \item every component of $\Delta^h$ is \textsl{horizontal} over $X$,
          i.e., surjects onto $X$; and
        \item $\Delta^v$ is \textsl{vertical} over $X$, i.e.,
          $f(\Supp(\Delta^v))\subsetneq X$.
      \end{itemize}
  \end{enumerate}
  Note that $U(f,\Delta)$ satisfies $U(f,\Delta)\cap f(\Delta^v)
  =\emptyset$.
\end{notation}
\subsection{Reflexive sheaves and weak positivity}
In this section, fix an integral noetherian scheme $X$.
To prove Theorem \ref{thm:wpnc}, we need some basic results on reflexive
sheaves, which we collect here.
\begin{definition}\label{def:reflnormal}
  A coherent sheaf $\sF$ on $X$ is \textsl{reflexive} if the
  natural morphism $\sF\to\sF^{\vee\vee}$ is an isomorphism, where
  $\sG^\vee \coloneqq \HHom_{\cO_X}(\sG,\cO_X)$.
  In particular, locally free 
  sheaves are reflexive.
  \par A coherent sheaf $\sF$ on $X$ is \textsl{normal} if the restriction map
  \[
    \Gamma(U,\sF) \longrightarrow \Gamma(U\smallsetminus Z,\sF)
  \]
  is bijective for every open set $U\subseteq X$ and every closed subset $Z$ of
  $U$ of codimension at least 2.
\end{definition}
\begin{proposition}[see {\cite[Proposition 1.11]{Har94}}]\label{prop:reflexivenormal}
  If $X$ is normal, then
  every reflexive coherent sheaf $\sF$ is normal.
\end{proposition}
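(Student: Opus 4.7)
My plan is to reduce the statement to the fact that on a normal noetherian scheme, $\cO_X$ itself is normal in the sense of Definition~\ref{def:reflnormal}. This is the classical algebraic Hartogs statement, equivalent to $\cO_X$ satisfying Serre's condition $S_2$, and holds on any normal noetherian scheme by Serre's criterion for normality. This will be the only place where the hypothesis that $X$ is normal enters.

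Since $\sF$ is reflexive, I would write $\sF = \HHom_{\cO_X}(\sG, \cO_X)$ with $\sG \coloneqq \sF^\vee$. Proving $\sF$ is normal then amounts to proving, for every open $U \subseteq X$ and every closed subset $Z \subseteq U$ of codimension at least $2$, that the restriction
\[
  \Hom_{\cO_U}\bigl(\sG|_U, \cO_U\bigr) \longrightarrow \Hom_{\cO_{U \setminus Z}}\bigl(\sG|_{U \setminus Z}, \cO_{U \setminus Z}\bigr)
\]
is bijective. Being a local condition on $X$ (checked via the sheaf axiom on both existence and uniqueness of extensions), it suffices to work on opens small enough for $\sG$ to admit a finite presentation $\cO_U^p \to \cO_U^q \to \sG|_U \to 0$. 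Applying $\HHom_{\cO_U}(-, \cO_U)$ produces a left exact sequence
\[
  0 \longrightarrow \HHom_{\cO_U}\bigl(\sG|_U, \cO_U\bigr) \longrightarrow \cO_U^q \longrightarrow \cO_U^p,
\]
embedding $\sF|_U$ as the kernel of a map of free sheaves.

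From here the argument is a short diagram chase. Given a section $s \in \Gamma(U \setminus Z, \sF)$, view it inside $\Gamma(U \setminus Z, \cO_U^q)$; its image in $\Gamma(U \setminus Z, \cO_U^p)$ vanishes. Algebraic Hartogs applied componentwise extends $s$ uniquely to some $\tilde s \in \Gamma(U, \cO_U^q)$. The image of $\tilde s$ in $\Gamma(U, \cO_U^p)$ vanishes on the dense open $U \setminus Z$, and hence on all of $U$ because $\cO_U^p$ is torsion-free (as $X$ is integral). Thus $\tilde s \in \Gamma(U, \sF)$, and uniqueness of the extension is inherited from the corresponding uniqueness in $\cO_U^q$. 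The only non-formal step, and therefore the main ``obstacle,'' is the algebraic Hartogs input for $\cO_X$; once that is granted, the rest is bookkeeping via the dualization of a presentation.
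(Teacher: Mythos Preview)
The paper does not supply its own proof of this proposition; it simply cites \cite[Proposition~1.11]{Har94}. Your argument is correct and is essentially the standard proof found there: embed the reflexive sheaf locally as the kernel of a map of free modules by dualizing a presentation of $\sF^\vee$, then invoke the $S_2$ property of $\cO_X$ (algebraic Hartogs) componentwise. One small remark: where you say the image of $\tilde s$ in $\Gamma(U,\cO_U^p)$ vanishes because $\cO_U^p$ is torsion-free on an integral scheme, you could equally (and slightly more robustly) appeal once more to the injectivity of $\Gamma(U,\cO_U^p)\to\Gamma(U\smallsetminus Z,\cO_U^p)$ coming from Hartogs, which avoids needing $X$ integral at that step; but in the paper's setting $X$ is integral, so either justification is fine.
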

\begin{citedlem}[{\cite[\href{http://stacks.math.columbia.edu/tag/0AY4}{Tag
  0AY4}]{stacks-project}}]\label{lem:homreflexive}
  Let $\sF$ and $\sG$ be coherent sheaves on $X$, and assume that $\sF$ is
  reflexive. Then, $\HHom_{\cO_X}(\sG,\sF)$ is also reflexive.
\end{citedlem}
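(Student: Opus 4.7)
The plan is to write $\HHom_{\cO_X}(\sG,\sF)$ as a dual sheaf and then reduce to showing that any dual of a coherent sheaf is reflexive.

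Since $\sF=\HHom_{\cO_X}(\sF^\vee,\cO_X)$ by reflexivity, the tensor--hom adjunction yields a natural isomorphism
\[
\HHom_{\cO_X}(\sG,\sF) \;\cong\; \HHom_{\cO_X}\bigl(\sG,\HHom_{\cO_X}(\sF^\vee,\cO_X)\bigr) \;\cong\; \HHom_{\cO_X}(\sG\otimes\sF^\vee,\cO_X) \;=\; (\sG\otimes\sF^\vee)^\vee.
\]
Hence it suffices to show that for every coherent sheaf $\mathscr{H}$ on $X$, the dual $\mathscr{H}^\vee$ is reflexive.

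For this sublemma, I would use that $X$ is noetherian and $\mathscr{H}$ coherent to pick a finite free presentation $\cO_X^{\oplus m}\to\cO_X^{\oplus n}\to\mathscr{H}\to 0$. Applying the left exact contravariant functor $\HHom_{\cO_X}(-,\cO_X)$ yields the exact sequence
\[
0\longrightarrow\mathscr{H}^\vee\longrightarrow\cO_X^{\oplus n}\longrightarrow\cO_X^{\oplus m},
\]
realizing $\mathscr{H}^\vee$ as the kernel of a morphism between finite free sheaves. The standard characterization of reflexive sheaves (due to Hartshorne in the integral noetherian setting) then guarantees that any coherent sheaf occurring as such a kernel is reflexive.

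The main obstacle is establishing this characterization: that the kernel $\mathscr{K}$ of a morphism $\varphi\colon\cO_X^{\oplus n}\to\cO_X^{\oplus m}$ is reflexive. I would prove it via a diagram chase. Since finite free sheaves are canonically their own double duals, termwise double dualization of the left exact sequence above produces a map $\mathscr{K}^{\vee\vee}\to\cO_X^{\oplus n}$ whose further composition with $\varphi$ is the double dual of the zero map, hence vanishes; therefore this map factors through $\ker\varphi=\mathscr{K}$ and yields a candidate inverse to the canonical map $\eta_\mathscr{K}\colon\mathscr{K}\to\mathscr{K}^{\vee\vee}$. Injectivity of $\eta_\mathscr{K}$ follows because $\mathscr{K}$ is torsion-free as a subsheaf of $\cO_X^{\oplus n}$; the other composition is the identity once one knows that $\mathscr{K}^{\vee\vee}\to\cO_X^{\oplus n}$ is injective, which in our integral noetherian setting follows from the fact that any dual embeds into its generic localization.
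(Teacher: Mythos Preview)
The paper does not give its own proof of this lemma; it is quoted from the Stacks Project (Tag~0AY4) and used as a black box. Your argument is essentially the standard one found there: use tensor--Hom adjunction to rewrite $\HHom_{\cO_X}(\sG,\sF)$ as a dual, then show that any dual is reflexive by exhibiting it locally as the kernel of a map between finite free modules. One small correction: a coherent sheaf on a noetherian scheme admits a finite free presentation only \emph{locally}, so you should pass to an affine open before writing $\cO_X^{\oplus m}\to\cO_X^{\oplus n}\to\mathscr{H}\to 0$; since reflexivity is a local property this is harmless. Your diagram chase is then correct once you make precise that $\mathscr{K}^{\vee\vee}\to\cO_X^{\oplus n}$ is injective because $\mathscr{K}^{\vee\vee}$, being a dual on an integral scheme, is torsion-free and the map is an isomorphism at the generic point.
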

We will often use these facts
to extend morphisms from the complement of codimension at least 2, as recorded in the following:
\begin{corollary}\label{cor:extendsections}
  Suppose $X$ is normal, and let $\sF$ and $\sG$ be coherent sheaves on $X$ such
  that $\sF$ is reflexive.
  If $U \subseteq X$ is an open subset such that $\codim(X \smallsetminus U) \ge
  2$, then every morphism $\varphi\colon \sG\rvert_U \to \sF\rvert_U$ extends
  uniquely to a morphism $\widetilde{\varphi}\colon\sG \to \sF$.
\end{corollary}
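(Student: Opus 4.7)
The plan is to transport the extension problem from maps of sheaves to sections of a single sheaf, and then apply normality. First I would observe that giving a morphism $\varphi\colon \sG\rvert_U \to \sF\rvert_U$ is the same as giving a global section of $\HHom_{\cO_U}(\sG\rvert_U, \sF\rvert_U)$, which by formation of $\HHom$ with respect to open immersions equals $\HHom_{\cO_X}(\sG,\sF)\rvert_U$. Thus $\varphi$ corresponds to an element of $\Gamma\bigl(U,\HHom_{\cO_X}(\sG,\sF)\bigr)$, and constructing $\widetilde{\varphi}$ amounts to extending this section across $X\smallsetminus U$.

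Next, I would invoke Lemma \ref{lem:homreflexive} to conclude that $\HHom_{\cO_X}(\sG,\sF)$ is reflexive, since $\sF$ is. Because $X$ is normal, Proposition \ref{prop:reflexivenormal} then shows that $\HHom_{\cO_X}(\sG,\sF)$ is a normal coherent sheaf in the sense of Definition \ref{def:reflnormal}.

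Finally, I would apply the defining property of normality with the ambient open set taken to be $X$ and with $Z \coloneqq X\smallsetminus U$; the hypothesis $\codim(X\smallsetminus U)\ge 2$ is exactly what is needed. This yields a bijection
\[
  \Gamma\bigl(X,\HHom_{\cO_X}(\sG,\sF)\bigr) \longrightarrow \Gamma\bigl(U,\HHom_{\cO_X}(\sG,\sF)\bigr),
\]
so $\varphi$ has a unique preimage $\widetilde{\varphi}\in\Hom_{\cO_X}(\sG,\sF)$, giving both existence and uniqueness of the extension.

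There is no real obstacle here; the only point to be slightly careful about is the compatibility between $\HHom$ and restriction to opens, which is immediate, and the minor bookkeeping of matching the variables in the normality definition to the situation at hand. The argument is purely formal once the two preceding results in this subsection are in place.
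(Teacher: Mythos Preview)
Your proposal is correct and follows exactly the same route as the paper: identify $\varphi$ with a section of $\HHom_{\cO_X}(\sG,\sF)$ over $U$, use Lemma~\ref{lem:homreflexive} to see this sheaf is reflexive, and then apply Proposition~\ref{prop:reflexivenormal} to extend uniquely across the codimension~$\ge 2$ complement. The paper's proof is simply a terser version of what you wrote.
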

\begin{proof}
  The morphism $\varphi$ corresponds to a section of the sheaf
  $\HHom_{\cO_X}(\sG,\sF)$ over $U$.
  The sheaf $\HHom_{\cO_X}(\sG,\sF)$ is reflexive by Lemma
  \ref{lem:homreflexive}, hence the section $\varphi$ extends uniquely to a
  section $\widetilde{\varphi}$ of $\HHom_{\cO_X}(\sG,\sF)$ over $X$ by
  Proposition \ref{prop:reflexivenormal}.
\end{proof}
We will use the following notation throughout this paper:
\begin{citednot}[{\cite[Notation 3.3]{Horing}}]\label{notn:wp}
  Let $\sF$ be a torsion-free coherent sheaf on a normal variety $X$.
	Let $i \colon X^* \hookrightarrow X$ be the largest open set such that
  $\sF\rvert_{X^*}$
is locally free.
We define
\begin{alignat*}{5}
  \Sym^{[b]}\sF&\coloneqq i_*\Sym^{b}(\sF\bigr\rvert_{X^*})
&\quad \text{and}& \quad& \sF^{[b]} &\coloneqq i_*\bigl((\sF\bigr\rvert_{X^*})^{\otimes b}\bigr).
\intertext{We can also describe these sheaves as
follows:}
\Sym^{[b]}\sF &\simeq \bigl(\Sym^{b}(\sF)\bigr)^{\vee\vee}
&\quad \text{and}& \quad& \sF^{[b]} &\simeq (\sF^{\otimes b})^{\vee\vee}.
  \end{alignat*}
	Indeed, these pairs of reflexive sheaves coincide in codimension 1 and hence are isomorphic by
	\cite[Theorem 1.12]{Har94}

\end{citednot}
We can now define the positivity notion appearing in Theorem \ref{thm:wp}.
\begin{definition}[Weak positivity {\cite[Definition 1.2]{Vie83}}]
  \label{def:wp}
  Let $X$ be a normal variety, and let $U \subseteq X$ be an open set.
  A torsion-free coherent sheaf $\sF$ on $X$ is said to be
	\textsl{weakly positive on $U$} if
  for every positive integer
  $a$ and every ample line bundle $\sL$ on $X$, there exists an integer
  $b \ge 1$ such that $\Sym^{[ab]}\sF\otimes \sL^{\otimes b}$ is globally
  generated on $U$.
  We say $\sF$ is \textsl{weakly positive} if $\sF$ is weakly positive on some
  open set $U$.
\end{definition}

\subsection{Dualizing complexes and canonical sheaves}
The main reference for this section is \cite{Har66}.
We define the following:
\begin{definition}
  Let $h \colon X \to \Spec k$ be an equidimensional scheme of finite type over a field $k$.
  Then the \textsl{normalized dualizing complex} for $X$ is $\omega_X^\bullet
  \coloneqq h^!k$, where $h^!$ is the exceptional pullback of Grothendieck
  duality \cite[Corollary VII.3.4]{Har66}.
  One defines the \textsl{canonical sheaf} on $X$ to be the
  coherent sheaf
  \[
    \omega_X\coloneqq \mathbf{H}^{-\dim X}\omega_X^{\bullet}.
  \]
\end{definition}
When $X$ is smooth and equidimensional over a field, the canonical
sheaf $\omega_X$ is isomorphic to the invertible sheaf of volume forms
$\Omega_X^{\dim X}$ \cite[III.2]{Har66}.
\par We will need the explicit description of the exceptional pullback functor
for finite morphisms.
Let $\nu\colon Y\to X$ be a finite morphism of equidimensional schemes of finite type over a field. 
Consider the functor
\[
  \overline{\nu}^*\colon \Mod(\nu_*\cO_Y)\longrightarrow \Mod(\cO_Y)
\]
obtained from the morphism $\overline{\nu}\colon (Y,\cO_Y) \to (X,\nu_*\cO_Y)$
of ringed spaces.
This functor $\overline{\nu}^*$ satisfies the following properties
(see \cite[III.6]{Har66}):
\begin{enumerate}[label=$(\alph*)$,ref=\alph*]
  \item\label{item:uppershriekprop1} The functor $\overline{\nu}^*$ is exact since the morphism
    $\overline{\nu}$ of ringed spaces is flat.
    We define the functor
    \begin{align*}
      \nu^!\colon \DD^+(\Mod(\cO_X)) &\longrightarrow \DD^+(\Mod(\cO_Y))\\
      \sF &\longmapsto \overline{\nu}^*\RHHom_{\cO_X}(\nu_*\cO_Y,\sF)
    \end{align*}
  \item\label{item:uppershriekprop3} For every $\cO_X$-module $\sG$, we have $\nu^*\sG \simeq
    \overline{\nu}^*(\sG\otimes_{\cO_X} \nu_*\cO_Y)$.
  \item\label{item:uppershriekprop4} 
	If $\omega_X^{\bullet}$ is the normalized dualizing complex
    for $X$, then $\nu^{!}\omega_Y^{\bullet}$ is the normalized dualizing
    complex for $Y$.
\end{enumerate}
\par Using the above description, we construct the following \textsl{pluri-trace
map} for integral schemes over fields, which we will use in the
proof of Theorem \ref{thm:wpnc}. We presume that this construction is already known to the experts,
but we could not find a reference.
\begin{lemma}\label{lem:pluritr}
  Let $d\colon Y'\to Y$ be a dominant proper birational morphism of
  integral schemes of finite type over a field, where $Y'$ is normal and $Y$ is
  Gorenstein.
  Then, there is a map of pluricanonical sheaves
  \[
    d_*\omega^{\otimes k}_{Y'} \longrightarrow \omega^{\otimes k}_Y
  \]
  which is an isomorphism where $d$ is an isomorphism.
\end{lemma}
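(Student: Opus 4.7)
The plan is to split the construction into the cases $k=1$ and $k \ge 2$, handling the former via Grothendieck duality and the latter by a local bootstrapping argument that exploits the invertibility of $\omega_Y$.

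For $k = 1$, I would apply the counit of the Grothendieck duality adjunction $\mathbf{R}d_* \dashv d^!$ to $\omega_Y^\bullet$, giving a morphism $\mathbf{R}d_* d^!\omega_Y^\bullet \to \omega_Y^\bullet$. By definition $d^!\omega_Y^\bullet = \omega_{Y'}^\bullet$, and since $Y$ is Gorenstein, $\omega_Y^\bullet \simeq \omega_Y[\dim Y]$. Because $Y'$ is normal and equidimensional of the same dimension, $\omega_{Y'}^\bullet$ has cohomology concentrated in degrees $\ge -\dim Y$ with $\mathbf{H}^{-\dim Y}(\omega_{Y'}^\bullet) = \omega_{Y'}$, so taking $\mathbf{H}^{-\dim Y}$ of the counit (only the $R^0 d_* \mathbf{H}^{-\dim Y}$ term contributes at the bottom of the Leray spectral sequence) yields the trace $\Tr\colon d_*\omega_{Y'} \to \omega_Y$. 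Grothendieck duality is local on the base, so $\Tr$ restricts to the identity over the locus where $d$ is an isomorphism.

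For $k \ge 2$, since $\omega_Y$ is invertible the projection formula provides a natural isomorphism
\[
d_*\omega_{Y'}^{\otimes k} \otimes \omega_Y^{\otimes -k} \;\cong\; d_*\bigl(\omega_{Y'}^{\otimes k} \otimes d^*\omega_Y^{\otimes -k}\bigr),
\]
so producing the pluri-trace reduces to producing a morphism $d_*(\omega_{Y'}^{\otimes k} \otimes d^*\omega_Y^{\otimes -k}) \to \cO_Y$. Working locally on $Y$, I trivialize $\omega_Y$ by a section $\eta$ over an open $U$; then every $\sigma \in \omega_{Y'}^{\otimes k}(d^{-1}(U))$ admits a unique expression $\sigma = \phi \cdot (d^*\eta)^{\otimes k}$ with $\phi$ a rational function on $Y'$, and I define the map by $\sigma \mapsto \phi$. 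The prescription is independent of $\eta$ (a change $\eta \mapsto u\eta$ by a unit multiplies $\phi$ by $u^{-k}$, which cancels against the factor $u^k$ appearing on the target $\omega_Y^{\otimes k}$), so the local maps glue.

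The main obstacle is to verify that $\phi$ lies in $\cO_Y(U)$, ensuring the map takes values in $\cO_Y$ rather than a rational extension. Since $Y$ is Gorenstein and therefore $S_2$, it suffices to check regularity at codimension-$\le 1$ points of $U$; at such points inside the iso locus of $d$ this is automatic, because $d^*\eta$ locally generates $\omega_{Y'}$ there. At codimension-$1$ points of $U$ outside the iso locus, a direct local computation using the structure of $d^*\eta$ as a rational section of $\omega_{Y'}$ completes the argument, modeled on the nodal-curve case, where the formula $\phi = f(x)x^k$ lands in $k[[x,y]]/(xy)$ precisely because of the forced vanishing at the node. Once $\phi \in \cO_Y(U)$ is established, the local maps glue to the desired global map $d_*\omega_{Y'}^{\otimes k} \to \omega_Y^{\otimes k}$, which restricts to the identity over the iso locus by construction.
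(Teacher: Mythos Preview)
Your $k=1$ argument via the Grothendieck duality trace is correct, and for $k\ge 2$ your reduction to showing $\phi\in\cO_Y(U)$, together with the $S_2$ reduction to codimension~$\le 1$, are both sound. The gap is exactly where you flag ``the main obstacle'': at codimension-$1$ points of $Y$ outside the isomorphism locus you offer only the nodal example and the phrase ``a direct local computation.'' But these are precisely the non-normal codimension-$1$ points of $Y$, and there is no computation to perform until one identifies the structural relationship between $\omega_{Y'}$ and $d^*\omega_Y$ there. That relationship is governed by the \emph{conductor}: factoring $d$ through the normalization $\nu\colon\overline{Y}\to Y$, one has $\omega_{\overline{Y}}\simeq\mathfrak{c}\cdot\nu^*\omega_Y$ where $\mathfrak{c}\subseteq\cO_{\overline{Y}}$ is the conductor ideal, which is simultaneously an ideal of $\cO_Y$. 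Since $d'\colon Y'\to\overline{Y}$ is proper birational between normal schemes, Zariski's Main Theorem makes $d'$ an isomorphism over every codimension-$1$ point of $\overline{Y}$; hence at a codimension-$1$ point $p$ of $Y$ your $\phi$ lands in $\mathfrak{c}^k\subseteq\mathfrak{c}\subseteq\cO_{Y,p}$. This is what makes your nodal check succeed, and without it the general claim is unproved.

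The paper's proof supplies exactly this missing ingredient, though organized differently: it factors through the normalization from the outset, uses the conductor inclusion $\omega_{\overline{Y}}^{\otimes(k-1)}\hookrightarrow\nu^*\omega_Y^{\otimes(k-1)}$ for $k-1$ of the factors, extends the resulting map over all of $\overline{Y}$ by reflexivity (the non-isomorphism locus of $d'$ has codimension $\ge 2$ in $\overline{Y}$ by Zariski's Main Theorem), and then applies the ordinary trace $\nu_*\omega_{\overline{Y}}\to\omega_Y$ to the remaining factor. Your route would also go through once the conductor is invoked, and treating all $k$ factors symmetrically is arguably cleaner; but as written it stops short of the essential step.
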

\begin{proof}
  By the universal property of normalization
  \cite[\href{http://stacks.math.columbia.edu/tag/035Q}{Tag 035Q}]{stacks-project},
  we can factor $d$ as
  \[
    \begin{tikzcd}[column sep=1.475em]
      Y' \rar{d'}\arrow[bend right=25]{rr}[swap]{d} & \overline{Y} \rar{\nu} & Y
    \end{tikzcd}
  \]
  where $\nu$ is the normalization.
  Note that $d'$ is proper and birational since $d$ is.

We first construct a similar morphism for $\nu$. Let $n=\dim Y$. 
Since $Y$ is Gorenstein, the canonical sheaf $\omega_Y$ is invertible and the
normalized dualizing complex is $\omega_Y[n]$
\cite[Proposition V.9.3]{Har66}.
Using property $(\ref{item:uppershriekprop4})$ above we have
\begin{align*}
  \omega_{\overline{Y}} = \mathbf{H}^{-n}(\nu^{!}\omega_Y^{\bullet}) &\simeq
  \overline{\nu}^*\big(\mathbf{R}^{-n}\HHom_{\cO_Y}(\nu_*\cO_{\overline{Y}},
  \cO_Y[n])\otimes_{\cO_Y} \omega_Y\big)\\
  &\simeq \overline{\nu}^*\big(\!\HHom_{\cO_Y}(\nu_*\cO_{\overline{Y}}, \cO_Y)\otimes_{\cO_Y} \omega_Y\big)
\end{align*}
where we get the first isomorphism since $\overline{\nu}^*$ is exact by
$(\ref{item:uppershriekprop1})$ and since $\omega_Y$ is invertible.

Now $\HHom_{\cO_Y}(\nu_*\cO_{\overline{Y}}, \cO_Y)$ admits a morphism to $\nu_*\cO_{\overline{Y}}$ which makes it the largest ideal in $\nu_*\cO_{\overline{Y}}$ that is also
an ideal in $\cO_Y$. It is the so-called \textsl{conductor ideal} of the
normalization map \cite[(5.2)]{Kol13}. Thus, we get a morphism
\begin{equation*}
\omega_{\overline{Y}} \hooklongrightarrow
\overline{\nu}^*(\nu_*\cO_{\overline{Y}} \otimes \omega_Y) \simeq \nu^*\omega_Y.
\end{equation*}
The last isomorphism follows from $(\ref{item:uppershriekprop3})$ above. By
taking the $(k-1)$-fold tensor product of the above morphism we 
have
\begin{equation}\label{norm-trace}
  \omega_{\overline{Y}}^{\otimes(k-1)}\hooklongrightarrow
  \nu^*\omega_Y^{\otimes(k-1)}.
\end{equation}

Finally, we use \eqref{norm-trace} to construct a map 
\[
  d_*\omega_{Y'}^{\otimes k} \longrightarrow \nu^*\omega_Y^{\otimes(k-1)}\otimes_{\cO_{\overline{Y}}} \omega_{\overline{Y}}.
\]
First, we construct the above morphism over $U$ where $d'$ is an isomorphism.
Denote $V\coloneqq d'^{-1}(U)$. The identity map
\[
  \id\colon d'_*\omega_{V}^{\otimes k} \longrightarrow \omega^{\otimes k}_{U}
\]
  composed with map obtained from \eqref{norm-trace} gives the following map
\[
  \tau\colon\omega_{U}^{\otimes k}\hooklongrightarrow \nu^*\omega_{Y}^{\otimes(k-1)}\bigr\rvert_U\otimes_{\cO_U} \omega_{U}.
\]
Since $\nu^*\omega_{Y}^{\otimes(k-1)}$ is invertible and $\omega_{\overline{Y}}$ is reflexive, the sheaf
$\nu^*\omega_Y^{\otimes(k-1)}\otimes \omega_{\overline{Y}}$ is also reflexive. 
Now $\codim(Y\smallsetminus U)\geq 2$ by Zariski's Main Theorem (see
\cite[Theorem V.5.2]{Har77}). Therefore by Corollary \ref{cor:extendsections} 
we obtain
\[
  \widetilde{\tau}\colon d'_*\omega_{Y'}^{\otimes k} \longrightarrow \nu^*\omega_{Y}^{\otimes(k-1)}\otimes_{\cO_{\overline{Y}}}\omega_{\overline{Y}}.
\]
Composing $\nu_*\widetilde{\tau}$ with one copy of the trace morphism $\nu_*\omega_{\overline{Y}} \to \omega_Y$ \cite[Proposition III.6.5]{Har66}, we get
\begin{equation}\label{desing-trace}
d_*\omega_{Y'}^{\otimes k}\xrightarrow{\nu_*\widetilde{\tau}}
\nu_*(\nu^*\omega_{Y}^{\otimes
(k-1)}\otimes_{\cO_{\overline{Y}}}\omega_{\overline{Y}}) \simeq\omega^{\otimes
(k-1)}_Y\otimes_{\cO_Y} \nu_*\omega_{\overline{Y}}\xrightarrow{\id\otimes
\Tr}\omega_Y^{\otimes k}.
\end{equation}

The last part of the statement holds by construction of the maps above.
Indeed, in \eqref{desing-trace}
the trace morphism is compatible with flat base change
\cite[Proposition III.6.6(2)]{Har66}, hence compatible with restriction to the
open set where $d$ is an
isomorphism.
\end{proof}
\subsection{Singularities of pairs}
We follow the conventions of \cite[\S2.3]{Fuj17}; see also
\cite[\S\S1.1,2.1]{Kol13}.
Recall that $X_\reg$ denotes the regular locus of a scheme $X$ (Notation
\ref{notn:goodopen}$(\ref{notn:goodopena})$).
\begin{definition}[Canonical divisor]
  Let $X$ be a normal variety of dimension $n$.
  A \textsl{canonical divisor} $K_X$ on $X$ is a Weil divisor such that
  \[
    \cO_{X_\reg}(K_X) \simeq \Omega^n_{X_\reg}.
  \]
  The choice of a canonical divisor $K_X$ is unique up to linear equivalence. Then 
	one defines $\cO_X(K_X)$ to be the reflexive sheaf of rank 1 associated to $K_X$.
\end{definition}
The following lemma allows us to freely pass between divisor and sheaf notation on normal varieties:
\begin{lemma}\label{lem:canonicalnormal}
  Let $X$ be a normal variety of dimension $n$.
  Then, $\cO_X(K_X)$ is isomorphic to $\omega_X$.
\end{lemma}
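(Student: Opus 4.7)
The plan is to show that both $\cO_X(K_X)$ and $\omega_X$ are reflexive rank-one sheaves on $X$ that agree on the regular locus $X_{\reg}$, and then invoke the uniqueness of extensions provided by Corollary \ref{cor:extendsections} (or equivalently \cite[Theorem 1.12]{Har94}) to identify them globally.

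First, I would produce a canonical isomorphism on $X_{\reg}$. On the one hand, $\cO_X(K_X)\bigr\rvert_{X_{\reg}} \simeq \Omega^n_{X_{\reg}}$ by the very definition of the canonical divisor. On the other hand, on the smooth scheme $X_{\reg}$, the normalized dualizing complex is $\Omega^n_{X_{\reg}}[n]$ by \cite[III.2]{Har66}, so $\omega_X\bigr\rvert_{X_{\reg}} = \mathbf{H}^{-n}(\omega_X^\bullet)\bigr\rvert_{X_{\reg}} \simeq \Omega^n_{X_{\reg}}$ as well. Composing these gives an isomorphism $\varphi\colon \omega_X\bigr\rvert_{X_{\reg}} \xrightarrow{\sim} \cO_X(K_X)\bigr\rvert_{X_{\reg}}$.

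Next, I would observe that $\codim_X(X \smallsetminus X_{\reg}) \ge 2$ because $X$ is normal (hence regular in codimension one), and that both sheaves are reflexive of rank one: $\cO_X(K_X)$ is reflexive by definition, and $\omega_X$ is reflexive as a consequence of the fact that on a normal scheme the canonical sheaf coincides with $j_*(\omega_X\bigr\rvert_{X_{\reg}})$, where $j\colon X_{\reg} \hookrightarrow X$ is the inclusion. This last fact, which I expect to be the main technical step, can be deduced from property $(\ref{item:uppershriekprop4})$ of the exceptional pullback together with the depth characterization of Serre's condition $S_2$, or by observing that $\omega_X$ satisfies $S_2$ because the dualizing complex behaves well under the open immersion $j$ and the complement has codimension at least $2$ in the normal scheme $X$. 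Granted this, the sheaf $\omega_X$ is reflexive by \cite[Proposition 1.11]{Har94}.

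Finally, I would apply Corollary \ref{cor:extendsections} twice: the morphism $\varphi$ extends uniquely to a morphism $\widetilde{\varphi}\colon \omega_X \to \cO_X(K_X)$ of coherent sheaves on $X$ (taking $\sF = \cO_X(K_X)$, $\sG = \omega_X$, $U = X_{\reg}$), and its inverse $\varphi^{-1}$ extends uniquely to a morphism in the other direction. By uniqueness, the two extensions are mutually inverse on $X$, since their compositions restrict to the identity on $X_{\reg}$ and the identity is the unique extension of the identity. This yields $\omega_X \simeq \cO_X(K_X)$, as required.
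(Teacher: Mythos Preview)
Your proposal is correct and follows essentially the same route as the paper: show both sheaves are reflexive, identify them on $X_{\reg}$, and extend across the codimension-$\ge 2$ complement. The paper cites \cite[\href{http://stacks.math.columbia.edu/tag/0AWE}{Tag 0AWE}]{stacks-project} for the fact that $\omega_X$ is $S_2$ and then \cite[Theorem 1.9]{Har94} to conclude reflexivity, whereas you sketch this step yourself; note that your reference to \cite[Proposition 1.11]{Har94} is the wrong direction (that result says reflexive $\Rightarrow$ normal), and you want \cite[Theorem 1.9]{Har94} instead. Your final step using Corollary~\ref{cor:extendsections} twice is a valid unpacking of \cite[Theorem 1.12]{Har94}, which the paper invokes directly.
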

\begin{proof}
  The sheaf $\cO_X(K_X)$ is reflexive by definition
and the canonical sheaf $\omega_X$ is S2 (by
  \cite[\href{http://stacks.math.columbia.edu/tag/0AWE}{Tag
  0AWE}]{stacks-project}), hence reflexive (by \cite[Theorem 1.9]{Har94}).
  Since they are both isomorphic to $\Omega^n_{X_\reg}$ on $X_\reg$ and
  $\codim(X \smallsetminus X_\reg) \ge 2$, we have $\cO_X(K_X) \simeq
  \omega_X$ by \cite[Theorem 1.12]{Har94}.
\end{proof}
\begin{definition}[Discrepancy]
  Let $(X,\Delta)$ be a pair consisting of a normal variety $X$ and an
  $\RR$-divisor $\Delta$ on $X$ such that $K_X+\Delta$ is $\RR$-Cartier.
  Suppose $f\colon Y \to X$ is a proper birational morphism from a normal
  variety $Y$, and choose canonical divisors $K_Y$ and $K_X$ such that $f_*K_Y =
  K_X$.
  In this case, we may write
  \[
    K_Y = f^*(K_X+\Delta) + \sum_i a(E_i,X,\Delta)E_i,
  \]
  where the $E_i$ are irreducible Weil divisors.
  The real number $a(E_i,X,\Delta)$ is called the \textsl{discrepancy of $E_i$}
  with respect to $(X,\Delta)$, and the \textsl{discrepancy} of $(X,\Delta)$ is
  \[
    \discrep(X,\Delta) = \inf_E \bigl\{ a(E,X,\Delta) \bigm\vert E\ \text{is an
    exceptional divisor over $X$} \bigr\},
  \]
  where the infimum runs over all irreducible exceptional divisors of all proper
  birational morphisms $f\colon Y \to X$.
\end{definition}
\begin{definition}[Singularities of pairs]
  Let $(X,\Delta)$ be a pair consisting of a normal variety $X$ and an effective
  $\RR$-divisor $\Delta$ on $X$ such that $K_X+\Delta$ is $\RR$-Cartier.
  We say that $(X,\Delta)$ is \textsl{klt} if $\discrep(X,\Delta) > -1$ and
  $\lfloor \Delta \rfloor = 0$.
  We say that $(X,\Delta)$ is \textsl{log canonical} if $\discrep(X,\Delta) \ge
  -1$.
\end{definition}
We will repeatedly use the following results about log resolutions of log
canonical $\RR$-pairs.
\begin{lemma}\label{lem:logres}
  Let $(Y,\Delta)$ be a log canonical (resp.\ klt) $\RR$-pair, and
  consider a Cartier divisor $P$ on $Y$ such that $P\sim_{\RR} k(K_Y+\Delta+H)$
  for some integer $k \geq 1$ and some $\RR$-Cartier $\RR$-divisor $H$.
  Then, for every proper birational morphism $\mu\colon\widetilde{Y}\to Y$
  such that $\widetilde{Y}$ is smooth and $\mu^{-1}(\Delta) + \exc(\mu)$ has
  simple normal crossings support,
  there exists a divisor $\widetilde{P}$ on $\widetilde{Y}$ and an
  $\RR$-divisor $\widetilde{\Delta}$ such that
\begin{enumerate}[label=$(\roman*)$] 
  \item $\widetilde{\Delta}$ has coefficients in $(0,1]$ (resp.\ $(0,1)$) and
    simple normal crossings support;
  \item The divisor $\widetilde{P} - \mu^*P$ is an effective divisor with
    support in $\Supp\bigl(\exc(\mu)\bigr)$;
  \item The divisor $\widetilde{P}$ satisfies $\widetilde{P} \sim_\RR
    k(K_{\widetilde{Y}}+\widetilde{\Delta}+\mu^*H)$; and
  \item There is an isomorphism $\mu_*\cO_{\widetilde{Y}}(\widetilde{P})\simeq
    \cO_Y(P)$.
\end{enumerate}
\end{lemma}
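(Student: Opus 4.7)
The plan is to carry out the standard log-resolution/discrepancy computation, with a small rounding adjustment to handle the fact that $k(K_Y+\Delta)$ is only $\RR$-linearly equivalent (rather than equal) to the Cartier divisor $P$.

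Choose canonical divisors compatibly so that $\mu_* K_{\widetilde Y}=K_Y$, and form the discrepancy $\RR$-divisor
\[
F \coloneqq K_{\widetilde Y} - \mu^*(K_Y+\Delta) = \sum_i a_i E_i,
\]
where the sum runs over all prime divisors contained in $\Supp(\mu^{-1}(\Delta)) \cup \exc(\mu)$. For the strict transform of a component of $\Delta$, the coefficient $a_i$ is the negative of the coefficient in $\Delta$ and hence lies in $[-1,0)$ (resp.\ $(-1,0)$ in the klt case); for exceptional divisors $a_i$ is the usual discrepancy, which is $\geq -1$ (resp.\ $>-1$). Decompose $F = F^+ - \widetilde\Delta_0$ into effective $\RR$-divisors with disjoint supports, where $F^+$ collects the exceptional divisors with $a_i > 0$ and $\widetilde\Delta_0$ collects the rest. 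By construction, $\widetilde\Delta_0$ has coefficients in $(0,1]$ (resp.\ $(0,1)$) and
\[
K_{\widetilde Y}+\widetilde\Delta_0 = \mu^*(K_Y+\Delta) + F^+.
\]

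The naive choice $\widetilde P = \mu^*P + kF^+$ satisfies the required $\RR$-linear equivalence, but may fail to be integral since $kF^+$ can have irrational coefficients. I would instead set
\[
\widetilde P \coloneqq \mu^*P + \lceil kF^+\rceil, \qquad \widetilde\Delta \coloneqq \widetilde\Delta_0 + \tfrac{1}{k}\bigl(\lceil kF^+\rceil - kF^+\bigr),
\]
where the ceiling is taken coefficient-wise. Then $\widetilde P - \mu^*P = \lceil kF^+\rceil$ is effective and $\mu$-exceptional, giving (ii). The correction added to $\widetilde\Delta_0$ has coefficients in $[0,1/k) \subseteq [0,1)$ and is supported on divisors disjoint from $\Supp(\widetilde\Delta_0)$; combined with the SNC hypothesis on $\mu^{-1}(\Delta)+\exc(\mu)$, this yields (i). Statement (iii) is a direct bookkeeping calculation:
\[
\widetilde P - k(K_{\widetilde Y}+\widetilde\Delta+\mu^*H) = \mu^*P - k\mu^*(K_Y+\Delta+H) \sim_\RR 0,
\]
using the displayed identity and $P\sim_\RR k(K_Y+\Delta+H)$.

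For (iv), the projection formula yields
\[
\mu_*\cO_{\widetilde Y}(\widetilde P) \simeq \cO_Y(P) \otimes \mu_*\cO_{\widetilde Y}(E) \simeq \cO_Y(P),
\]
where $E\coloneqq\lceil kF^+\rceil$ is effective and $\mu$-exceptional; the final isomorphism is the standard fact $\mu_*\cO_{\widetilde Y}(E)\simeq\cO_Y$ for such $E$ over a normal base (any rational function on $\widetilde Y$ with polar locus contained in $E$ has non-negative order along every prime divisor of $Y$, hence is regular by normality of $Y$). The main subtlety is precisely the rounding step: rounding $kF^+$ up is what secures integrality of $\widetilde P$, and absorbing the small exceptional excess into $\widetilde\Delta$ preserves the coefficient bounds required in (i).
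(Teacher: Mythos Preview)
Your proof is correct and follows essentially the same approach as the paper's: decompose the discrepancy divisor into its positive (exceptional) and negative parts, round up to force integrality of $\widetilde P$, and absorb the rounding error into the boundary. The only cosmetic difference is that the paper rounds $Q=F^+$ first and then multiplies by $k$ (setting $\widetilde P=\mu^*P+k\lceil Q\rceil$ and $\widetilde\Delta=N+\lceil Q\rceil-Q$), whereas you round $kF^+$; both choices work for identical reasons.
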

\begin{proof}
	On $\widetilde{Y}$, we can write
  \[
    K_{\widetilde{Y}} - \mu^*(K_Y+\Delta) = Q - N
  \]
  where $Q$ and $N$ are effective $\RR$-divisors without common components, such
  that $Q - N$ has simple normal crossings support and $Q$ is $\mu$-exceptional.
  Note that since $(Y,\Delta)$ is log canonical (resp.\ klt), all coefficients
  in $N$ are less than or equal to $1$ (resp.\ less than 1).
  Let
  \[
    \widetilde{\Delta} \coloneqq N + \lceil Q \rceil - Q,
  \]
  so that by definition, $\widetilde{\Delta}$ has simple normal crossings
  support and coefficients in $(0,1]$ (resp.\ $(0,1)$).
  Now setting $\widetilde{P} \coloneqq \mu^*P + k\lceil Q \rceil$, we have
  \begin{align*}
    \widetilde{P} &\sim_\RR
    k\mu^*(K_Y+\Delta+H) + k\lceil Q \rceil\\
    &\sim_\RR
    kK_{\widetilde{Y}} + k(N + \lceil Q \rceil - Q) + \mu^*H =
    k(K_{\widetilde{Y}} + \widetilde{\Delta}+\mu^*H).
  \end{align*}
  Since $\lceil Q \rceil$ is $\mu$-exceptional,
  we get $\mu_*\cO_{\widetilde{Y}}(\widetilde{P}) \simeq \cO_Y(P)$
  by using the projection formula.
\end{proof}
We also use the following stronger notion of log resolution due to Szab\'o:
\begin{citedthm}[{\cite[Theorem 10.45.2]{Kol13}}]\label{citedthm:Szabo}
  Let $X$ be a variety, and
  let $D$ be a Weil divisor on $X$.
  Then, there is a log resolution $\mu\colon\widetilde{X}\to X$ of $(X,D)$
  such that $\mu$ is an isomorphism over
  the locus where $X$ is smooth and $D$ has simple normal crossings support. 
\end{citedthm}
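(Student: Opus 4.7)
The plan is to deduce Szab\'o's theorem from any functorial (equivariant) resolution of singularities algorithm, such as those of Bierstone--Milman, Villamayor, Encinas--Villamayor, or W\l odarczyk; a good reference presentation is Koll\'ar's own book. The key idea is that such resolution algorithms do not modify open subsets on which the input data is already in the desired normal form.

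First, I would set $U \subseteq X$ to be the open subset on which $X$ is smooth and $D$ has simple normal crossings support. On $U$, the identity morphism $\id_U \colon U \to U$ is itself a log resolution of the pair $(U, D\rvert_U)$. The goal is therefore to produce a log resolution $\mu \colon \widetilde{X} \to X$ of $(X,D)$ whose restriction $\mu^{-1}(U) \to U$ agrees with the identity.

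Next, I would invoke a functorial embedded resolution of singularities for the pair $(X, D)$. Such a resolution is built as a composition of blow-ups along smooth centers, and it is functorial in the sense that it commutes with smooth (in particular \'etale and open) morphisms: if $V \hookrightarrow X$ is an open immersion, then the resolution of $(X,D)$ restricts to the resolution of $(V, D\rvert_V)$. Applying this principle to the open immersion $U \hookrightarrow X$, and using that the resolution of $(U, D\rvert_U)$ must agree with the unique (trivial) resolution when no blow-ups are needed, one concludes that all the centers chosen by the algorithm are disjoint from $U$. Consequently $\mu$ is an isomorphism over $U$.

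The subtle step is establishing the functoriality property of the resolution algorithm and verifying that, when restricted to a pair in normal form (smooth ambient space with SNC boundary), the algorithm returns the identity. This is the main technical content that one inherits from the modern proofs of resolution of singularities; rather than reprove it, I would simply quote the functorial version already available in the literature (e.g.\ \cite[Theorem 3.35 and Theorem 3.36]{Kol13}) and assemble the argument above as a corollary.
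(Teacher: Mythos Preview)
The paper does not prove this statement at all: it is stated as a cited theorem from \cite[Theorem 10.45.2]{Kol13} and is used as a black box throughout. So there is no ``paper's own proof'' to compare against.

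That said, your sketch is the standard way the result is established in the literature and is essentially what Koll\'ar does in the cited reference: one deduces the statement from a functorial resolution algorithm (commuting with smooth, in particular open, morphisms), observing that on the locus $U$ where $X$ is smooth and $D$ already has simple normal crossings support the algorithm performs no blow-ups, so all centers are disjoint from $U$ and $\mu$ is an isomorphism there. The only caveat is that you should be careful about what ``the resolution of $(U,D\rvert_U)$ must agree with the unique (trivial) resolution when no blow-ups are needed'' means: this is not automatic from functoriality alone, but is an additional termination/normal-form property of the specific algorithms you cite. You correctly flag this as the subtle step and defer to the references, which is appropriate here.
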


\subsection{A few tools from Popa--Schnell}

The following result is a slight generalization of \cite[Variant 1.6]{PS14}. 
This will be instrumental in proving Theorems \ref{thm:wp} and \ref{thm:wpnc}.
\begin{theorem}\label{thm:ps14}
  Let $f\colon Y \to X$ be a morphism of projective varieties where $Y$ is
  normal and $X$ is of dimension $n$.
  Let $\Delta$ be an $\RR$-divisor on $Y$ and $H$ a semiample $\QQ$-divisor on $X$
  such that for some integer $k \ge 1$, there is a Cartier divisor $P$ on $Y$ satisfying
  \[
    P \sim_\RR k(K_Y + \Delta+f^*H).
  \]
  Suppose, moreover, that $\Delta$ can be written as
  $\Delta= \Delta'+\Delta^v$ where
  $(Y,\Delta')$ is log canonical and $\Delta^v$ is an $\RR$-Cartier
  $\RR$-divisor that is vertical over $X$. 
  Let $\sL$ be an ample and globally generated line bundle on $X$. Then, the
  sheaf
  $$f_*\cO_Y(P)\otimes \sL^{\otimes\ell}$$
  is generated by global sections on some open set $U$ for all $\ell\geq
  k(n+1)$. Moreover, when $\Delta'$ has simple normal crossings
  support, we have $U=X\smallsetminus f(\Supp(\Delta^v))$.
\end{theorem}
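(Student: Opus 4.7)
The plan is to deduce this from Popa--Schnell's original Variant~1.6 \cite[Variant~1.6]{PS14}, which is exactly the case $\Delta^v = 0$ of our statement (giving global generation on all of $X$), by using a rounding trick to move the integer part of $k\Delta^v$ out of the boundary and into the Cartier divisor $P$, so that the remaining fractional part can be absorbed into the log canonical boundary.

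First I would apply Lemma~\ref{lem:logres} to the pair $(Y,\Delta')$ with $\Delta^v+f^*H$ playing the role of ``$H$''. This gives a log resolution $\mu\colon\widetilde Y\to Y$ of $(Y,\Delta')$, a smooth $\widetilde Y$, an SNC divisor $\widetilde\Delta'$ with coefficients in $(0,1]$, and a Cartier divisor $\widetilde P\sim_\RR k(K_{\widetilde Y}+\widetilde\Delta'+\mu^*\Delta^v+\mu^*f^*H)$ satisfying $\mu_*\cO_{\widetilde Y}(\widetilde P)\simeq\cO_Y(P)$. Since $\mu^*\Delta^v$ is still vertical and $\RR$-Cartier, I may henceforth assume that $Y$ is smooth, $\Delta'$ has SNC support with coefficients in $(0,1]$, and $\Delta^v$ is vertical $\RR$-Cartier. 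A further modification --- replacing $(P,\Delta^v)$ by $(P+V,\Delta^v+\tfrac{1}{k}V)$ for a suitable effective Cartier divisor $V$ supported on $\Supp(\Delta^v)$ with large enough coefficients --- allows me to also assume $\Delta^v\ge 0$; because $V$ is vertical with $f(\Supp V)\subseteq f(\Supp\Delta^v)$, this does not affect any sheaf restricted to $U := X\smallsetminus f(\Supp(\Delta^v))$.

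Now I would set the Cartier divisor $P'':=P-\lfloor k\Delta^v\rfloor$ on smooth $Y$ and the $\RR$-divisor $\Delta'':=\Delta'+\tfrac{1}{k}\{k\Delta^v\}$, where $\{k\Delta^v\}=k\Delta^v-\lfloor k\Delta^v\rfloor$ has coefficients in $[0,1)$. A short computation gives $P''\sim_\RR k(K_Y+\Delta''+f^*H)$, and because $\Delta'$ and $\Delta^v$ share no components and both have SNC support, $\Delta''$ has SNC support with all coefficients in $(0,1]$. Hence $(Y,\Delta'')$ is log canonical, and Popa--Schnell's Variant~1.6 applied to $(Y,\Delta''),P'',H,\sL$ shows that $f_*\cO_Y(P'')\otimes\sL^{\otimes\ell}$ is generated by global sections on all of $X$ for every $\ell\ge k(n+1)$.

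To finish, I would note that $\lfloor k\Delta^v\rfloor\ge 0$ is supported on $\Supp(\Delta^v)$, so the natural inclusion $\cO_Y(P'')\hookrightarrow\cO_Y(P)$ restricts to an isomorphism on $f^{-1}(U)$ and therefore induces an isomorphism $f_*\cO_Y(P'')\otimes\sL^{\otimes\ell}\bigr|_U\xrightarrow{\sim}f_*\cO_Y(P)\otimes\sL^{\otimes\ell}\bigr|_U$. The global sections of $f_*\cO_Y(P'')\otimes\sL^{\otimes\ell}$ inject into those of $f_*\cO_Y(P)\otimes\sL^{\otimes\ell}$, and they already generate the latter on $U$; in the SNC case this open set is exactly $X\smallsetminus f(\Supp(\Delta^v))$, as required. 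The main obstacle in this plan is the bookkeeping in the second step: verifying that the vertical Cartier correction $V$ can always be chosen to clear all negative coefficients of $k\Delta^v$ while preserving the $\RR$-linear equivalence class of $P+V$ and the log canonicity of the enlarged pair. This is a routine consequence of the $\RR$-Cartier hypothesis on $\Delta^v$ combined with smoothness of $Y$ after the log resolution step, so the reduction to the Popa--Schnell case ultimately is clean.
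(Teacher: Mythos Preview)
Your overall strategy --- reduce via Lemma~\ref{lem:logres}, round off the vertical part so what remains is log canonical, apply \cite[Variant~1.6]{PS14}, then compare on $U$ via the natural inclusion --- is exactly the paper's. But the bookkeeping has a real gap, and it is not the one you flag at the end.

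You assert that after the reduction $\Delta'$ and $\Delta^v$ share no components and both have SNC support, so that $\Delta''=\Delta'+\tfrac{1}{k}\{k\Delta^v\}$ is SNC with coefficients in $(0,1]$. Neither claim is justified. Your log resolution is of $(Y,\Delta')$ only, so $\mu^*\Delta^v$ need not be SNC or meet $\widetilde\Delta'$ transversally; and even after resolving the full pair, exceptional divisors $E$ of $\mu$ lying over $\Supp\Delta^v$ typically appear in \emph{both} $\widetilde\Delta'$ and $\mu^*\Delta^v$, so the coefficient of $E$ in $\Delta''$ can exceed $1$. (On a surface take $\Delta'=D_1+D_2$ and $\Delta^v=\tfrac12 D_3$ with all three through one point $p$; blowing up $p$ gives $E$ coefficient $1$ in $\widetilde\Delta'$ and $\tfrac12$ in $\mu^*\Delta^v$, so $\Delta''$ has coefficient $\tfrac32$ along $E$.) The paper avoids this by \emph{re-decomposing} the total divisor $\widetilde\Delta'+\mu^*\Delta^v$ into its horizontal and vertical parts $\Delta^h+\Delta^v$ \emph{after} the resolution: these share no components by construction, $\Delta^h$ inherits coefficients in $(0,1]$, and one then rounds down the new $\Delta^v$. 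Two smaller points: \cite[Variant~1.6]{PS14} has no semiample twist in its hypotheses, so $f^*H$ must be absorbed into the boundary by a Bertini choice, which the paper does explicitly; and your $V$-trick does not recover generation of $f_*\cO_Y(P)$ from that of $f_*\cO_Y(P+V)$, since the generating sections come from $H^0\bigl(f_*\cO_Y(P'')\bigr)$ and $P-P''=\lfloor k\Delta^v\rfloor$ is not effective when $\Delta^v$ has negative parts --- agreement of stalks over $U$ is not enough.
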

\begin{proof}
Possibly after a log resolution of $(Y,\Delta)$, we may assume that $\Delta=\Delta^h+\Delta^v$ in the sense of Notation 
\ref{notn:goodopen}$(\ref{notn:goodopenb})$, such that $(Y,\Delta^h)$ is log
canonical and $\Delta$ has simple normal crossing support. Indeed, let
$\mu\colon \widetilde{Y} \to Y$ be a log resolution of $(Y,\Delta)$.
Then, by Lemma \ref{lem:logres} applied to the pair $(Y,\Delta')$ and $H =
\Delta^v$, we obtain a log canonical $\RR$-divisor 
$\widetilde{\Delta}$ with simple normal crossings support 
on $\widetilde{Y}$ satisfying
\[K_{\widetilde{Y}}+\widetilde{\Delta}+\mu^*\Delta^v \sim_{\RR} \mu^*(K_Y+\Delta)+N\]
where $N$ is an effective $\mu$-exceptional divisor. We rename $\widetilde{Y}$
 and $\widetilde{\Delta}+\mu^*\Delta^v$ as $Y$ and $\Delta$ respectively. 

Now $\Delta$ has simple normal crossings support and $\Delta^h$ is log canonical. 
Moreover, since $f^*H$ is semiample, by Bertini's theorem we can pick a $\QQ$-divisor
$D\sim_{\QQ}f^*H$ with smooth support and satisfying the conditions that $D+\Delta$ has simple
normal crossing support and $D$ does not share any components with $\Delta$.  
Letting $\Delta'' \coloneqq
\Delta^v-\lfloor \Delta^v \rfloor$, we have that
\[
  \Delta = \Delta^h+\Delta''+\lfloor\Delta^v\rfloor
\]
and $(Y,\Delta^h+\Delta''+D)$ is log canonical. Since $\sL$ is ample and globally generated, we therefore obtain that
\[
  f_*\cO_Y\big(k(K_Y+\Delta^h+\Delta''+f^*H)\big)\otimes \sL^{\otimes \ell}
\]
is generated by global sections for all $\ell \geq k(n+1)$ by \cite[Variant 1.6]{PS14}. But
\[
  f_*\cO_Y\big(k(K_Y+\Delta^h+\Delta''+f^*H)\big)\otimes
  \sL^{\otimes\ell}\hooklongrightarrow f_*\cO_Y(P)\otimes \sL^{\otimes\ell},
\]
and they have the same stalks at every point $x\in U$. Thus, the sheaf on the right hand side is generated by global sections at $x$ for all $x\in U$ and for all $\ell \geq k(n+1)$.
\end{proof}
We will also need the following result, which is used in the proof of
\cite[Variant 1.6]{PS14}:
\begin{lemma}[cf.\ {\cite[p.\ 2280]{PS14}}]\label{lem:ps14trick}
  Let $f\colon Y \to X$ be a morphism of projective varieties, and let $\sF$ be
  a coherent sheaf on $Y$ such that the image of the counit map
  \[
    f^*f_*\sF \longrightarrow \sF
  \]
  of the adjunction $f^* \dashv f_*$
  is of the form $\sF(-E)$ for some effective Cartier divisor $E$ on $Y$.
  Then, for every effective Cartier divisor $E' \preceq E$, we have
  $f_*\bigl( \sF(-E') \bigr) \simeq f_*\sF$.
\end{lemma}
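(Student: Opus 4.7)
The plan is to derive the conclusion formally from the triangle identities for the adjunction $f^* \dashv f_*$. By hypothesis the counit $f^*f_*\sF \to \sF$ factors as a surjection $\alpha\colon f^*f_*\sF \twoheadrightarrow \sF(-E)$ followed by the inclusion $\iota\colon \sF(-E) \hookrightarrow \sF$.

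First I would apply the (left-exact) pushforward $f_*$; the map $f_*\iota\colon f_*\sF(-E) \hookrightarrow f_*\sF$ remains injective. Composing the unit $f_*\sF \to f_*f^*f_*\sF$ with $f_*\alpha$ produces a map $\beta\colon f_*\sF \to f_*\sF(-E)$, and the adjunction triangle identity for $f^* \dashv f_*$ says precisely that $f_*\iota \circ \beta = \id_{f_*\sF}$. Hence $f_*\iota$ is a split surjection, and combined with its injectivity, it is an isomorphism.

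To handle the general case, I would observe that for $E' \preceq E$ the divisor $E - E'$ is effective Cartier, so the canonical maps $\sF(-E) \to \sF(-E') \to \sF$ factor the inclusion $\iota$. Since $\iota$ is injective, so is each factor, giving subsheaf inclusions $\sF(-E) \subseteq \sF(-E') \subseteq \sF$. Applying $f_*$ and invoking the isomorphism from the previous paragraph, the composition $f_*\sF(-E) \hookrightarrow f_*\sF(-E') \hookrightarrow f_*\sF$ is an isomorphism, which forces each factor to be an isomorphism. In particular $f_*\sF(-E') \simeq f_*\sF$.

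I do not anticipate any significant obstacle; the argument is essentially formal from the adjunction and the left exactness of $f_*$. The only point requiring a moment of care is verifying that $\sF(-E) \hookrightarrow \sF(-E') \hookrightarrow \sF$ really forms a chain of subsheaf inclusions, which is immediate from the injectivity of $\iota$ given as part of the hypothesis on the image of the counit.
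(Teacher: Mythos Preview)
Your argument is correct and follows essentially the same idea as the paper's proof: both use the triangle identity for $f^* \dashv f_*$ to factor the identity on $f_*\sF$ through $f_*\sF(-E')$, forcing the inclusion $f_*\sF(-E') \hookrightarrow f_*\sF$ to be an isomorphism. The only difference is organizational: the paper applies the adjunction directly to the factorization $f^*f_*\sF \to \sF(-E') \hookrightarrow \sF$ for arbitrary $E' \preceq E$, whereas you first treat $E' = E$ and then squeeze. One small point: your claim that ``since $\iota$ is injective, so is each factor'' only immediately yields injectivity of the first map $\sF(-E) \to \sF(-E')$; injectivity of $\sF(-E') \to \sF$ is not formal from a factorization but follows because locally it is multiplication by a divisor of the local equation of $E$, and the latter already acts injectively on $\sF$ by hypothesis.
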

\begin{proof}
  We have the factorization
  \[
    \begin{tikzcd}[column sep=1.475em]
      f^*f_*\sF \rar & \sF(-E') \rar[hook] & \sF
    \end{tikzcd}
  \]
  and by applying the adjunction $f^* \dashv f_*$, we have a factorization
  \[
    \begin{tikzcd}[column sep=1.475em]
      f_*\sF \rar\arrow[bend right=20]{rr}[swap]{\id} & f_*\bigl(\sF(-E')\bigr)
      \rar[hook] & f_*\sF
    \end{tikzcd}
  \]
  of the identity.
\end{proof}
Finally, we record the following numerical argument that will appear in the proofs of Theorems \ref{thm:sing} and 
\ref{thm:pluri}.
\begin{lemma}[cf.\ {\cite[Theorem 1.7, Step 2]{PS14}}]\label{lem:numeric}
  Let $X$ be a smooth projective variety.
Let $\Delta$ be an effective $\RR$-Cartier divisor and $E$ an effective
$\ZZ$-divisor with simple normal crossings support such that 
$\Delta+E$ also has simple normal crossing support and $\Delta$ has coefficients
in $(0,1]$. Let
$0\le c< 1$ be a real number. Then, there exists an effective Cartier divisor $E'\preceq E$ such that
$\Delta+cE - E'$ has simple normal crossings support and coefficients in
$(0,1]$.
\end{lemma}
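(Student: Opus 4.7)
The argument is purely combinatorial on the coefficients of a finite set of prime divisors, so the plan is to reduce to a pointwise (per-prime-component) choice and then verify the numerical constraints. Write $\Delta = \sum_D a_D D$ and $E = \sum_D m_D D$, where $D$ ranges over the prime components appearing in $\Supp(\Delta+E)$, with $a_D \in [0,1]$ (only finitely many nonzero) and $m_D \in \ZZ_{\ge 0}$. I will seek $E' = \sum_D e_D D$ with $e_D \in \ZZ \cap [0, m_D]$ such that
\[
  a_D + c\, m_D - e_D \in (0,1] \quad\text{whenever}\ (a_D, m_D) \ne (0,0),
\]
and $a_D + c\, m_D - e_D = 0$ otherwise. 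Since $X$ is smooth, every Weil divisor is Cartier, so $E' \preceq E$ will then be an effective Cartier divisor, and the SNC condition for $\Delta + cE - E'$ will be automatic because its support is contained in $\Supp(\Delta+E)$, which has SNC by hypothesis.

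The key step is the per-component choice of $e_D$. First, for components with $m_D = 0$ (so $D$ appears only in $\Delta$) set $e_D := 0$; the coefficient is just $a_D \in (0,1]$. For components with $m_D \ge 1$, I will take
\[
  e_D := \big\lceil a_D + c\, m_D - 1 \big\rceil.
\]
Then $a_D + c\, m_D - e_D$ lies in $(0,1]$: by definition $e_D \ge a_D + c\, m_D - 1$, giving the upper bound $\le 1$, and one checks $e_D < a_D + c\, m_D$ (with equality forbidden since this would force $a_D + c\,m_D \in \ZZ$, in which case $e_D = a_D + c\,m_D - 1$ and the coefficient equals $1$), giving the lower bound $> 0$.

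It remains to verify that $e_D \in [0, m_D]$. Nonnegativity follows from $a_D + c\, m_D > 0$, which holds because $m_D \ge 1$ and either $a_D > 0$ or $c > 0$ (the case $c = 0$ reduces trivially by taking $E' = 0$). The upper bound $e_D \le m_D$ follows from $a_D + c\, m_D - 1 \le m_D$, which rearranges to $a_D - 1 \le (1-c) m_D$; the left side is $\le 0$ since $a_D \le 1$, and the right side is $\ge 0$ since $c < 1$ and $m_D \ge 0$, so this inequality holds.

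The main (and essentially only) obstacle is ensuring compatibility at the boundary values: the hypotheses $a_D \le 1$ and $c < 1$ are exactly what keep $e_D \le m_D$, and the strict inequality $c < 1$ is precisely what prevents the residual coefficient from dropping to $0$ or exceeding $1$ in a degenerate way. Aside from these numerical checks, the SNC condition is inherited without any work, so assembling the $e_D$ into $E' = \sum_D e_D D$ completes the proof.
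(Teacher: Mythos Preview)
Your proof is correct. The paper does not actually supply a proof of this lemma; it merely states the result with a reference to \cite[Theorem 1.7, Step 2]{PS14}, where the same numerical manipulation is carried out in the course of a longer argument. Your component-wise choice $e_D = \lceil a_D + c\,m_D - 1 \rceil$ (with the trivial reduction when $c=0$) is exactly the standard device, and your verification that $0 \le e_D \le m_D$ and that the residual coefficient lands in $(0,1]$ is clean and complete.
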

\subsection{Seshadri constants}
The effectivity of our results in Theorems \ref{thm:sing} and \ref{thm:pluri}
rely on Seshadri constants.
These were originally introduced by Demailly to measure local positivity of
line bundles and thereby study Fujita-type conjectures.
See \cite[Chapter 5]{Laz04a} for more on these invariants.
\begin{definition}
  Let $X$ be a projective variety, and let $x
  \in X$ be a closed point.
  Let $L$ be a nef $\RR$-Cartier $\RR$-divisor on $X$.
  Denote by $\mu\colon X' \to X$ the blow-up of $X$ at $x$ with exceptional
  divisor $E$.
  The \textsl{Seshadri constant} of $L$ at $x$ is
  \[
    \varepsilon(L;x) \coloneqq \sup\bigl\{t \in \RR_{\ge 0} \bigm\vert
    \mu^*L-tE\ \text{is nef}\bigr\}.
  \]  
  If $\sL$ is a nef line bundle, then we denote by
  $\varepsilon(\sL;x)$ the Seshadri constant of the associated Cartier divisor
  $L$ at $x$.
\end{definition}
The following result is crucial in making our results effective.
\begin{citedthm}[{\cite[Theorem 1]{EKL95}}]\label{thm:ekl95}
  Let $X$ be a projective variety of dimension $n$.
  Let $L$ be a big and nef Cartier divisor on $X$.
  Then, for every $\delta >0$, the locus
  \[
    \biggl\{ x \in X \biggm\vert \varepsilon(L;x) > \frac{1}{n+\delta}
    \biggr\}
  \]
  contains an open dense set.
\end{citedthm}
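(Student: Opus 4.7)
The plan is to argue by contradiction: assume the set $B_\delta = \{x \in X : \varepsilon(L;x) \le 1/(n+\delta)\}$ is Zariski dense in $X$, extract an obstructing family of subvarieties, and derive a numerical impossibility. The first step is to reinterpret the bound via the standard characterization
\[
\varepsilon(L;x)=\inf_{V \ni x}\frac{(L^{\dim V}\cdot V)^{1/\dim V}}{\operatorname{mult}_x V},
\]
where $V$ runs over positive-dimensional subvarieties of $X$ through $x$. A point $x \in B_\delta$ therefore carries some $V_x \ni x$ of dimension $d \in \{1,\ldots,n\}$ with $L^d \cdot V_x \le (\operatorname{mult}_x V_x)^d / (n+\delta)^d$. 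Since the Chow variety of $X$ has only countably many irreducible components and $B_\delta$ is uncountable, a Baire-category argument (stratifying by $d$, by the numerical class of $V_x$, and by a lower bound $m \ge 1$ on the multiplicity) yields a single dimension $d$, a single integer $m$, an irreducible component $T$ of the relevant Chow variety, and a section $\sigma \colon T \to \mathcal{V}$ of the universal family $\pi \colon \mathcal{V} \to T$, such that the evaluation map $e \colon \mathcal{V} \to X$ dominates $X$, $\operatorname{mult}_{\sigma(t)} V_t \ge m$ for general $t \in T$, and $L^d \cdot V_t \le m^d/(n+\delta)^d$ for every $t$.

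The second step is the numerical contradiction. Dominance of $e$ forces $\dim T \ge n-d$. After desingularizing $\mathcal{V}$ and blowing up along the image of $\sigma$, I would construct on the resolved total space a virtual $\QQ$-divisor of the form (pullback of a large multiple $kL$) minus a multiple of the exceptional divisor along $\sigma(T)$, and apply Kawamata--Viehweg vanishing together with bigness of $L$ to produce a section of $kL$ on $X$ whose restriction to a general fiber $V_t$ vanishes to order strictly greater than $km/(n+\delta)$ at $\sigma(t)$. Since $L\rvert_{V_t}$ is nef, the standard infinitesimal degree estimate on $V_t$ then yields $L^d \cdot V_t > m^d/(n+\delta)^d$, contradicting the defining upper bound on the family.

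The hardest part will be the jet-separation estimate along $\sigma(T)$. The key mechanism is that the extra $\dim T \ge n-d$ parameter directions allow one to differentiate a section vanishing only at $\sigma(t_0)$ for a single $t_0$ into one vanishing along \emph{all} of $\sigma(T)$ with the correct total order; this is what converts the naive denominator $d$ into $n$ and is where the freedom $\delta>0$ is consumed. The differentiation step depends on $L$ being genuinely big, not merely nef, to supply enough sections of $kL$ to begin with. Secondary technicalities include the boundary case $d=n$ (in which $\mathcal{V} \to T$ is trivial and the argument reduces to $L^n > 0$), and the need to pass to normalizations when $\pi$ is not generically reduced, so that the multiplicity counting along $\sigma$ survives base change.
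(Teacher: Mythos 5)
This statement is quoted by the paper as a black box (it is \cite[Theorem 1]{EKL95} combined with \cite[Lemma 1.4]{EKL95}, as Remark \ref{rem:betterbounds} indicates), so there is no in-paper proof to compare against; your sketch is an attempt to reprove Ein--K\"uchle--Lazarsfeld from scratch, and it has two genuine gaps. First, and most importantly, the entire content of the theorem is the step you defer as ``the hardest part'': the differentiation lemma showing that divisors $D_t \in \lvert kL\rvert$ with $\mathrm{mult}_{\sigma(t)}D_t \ge k\alpha$ for all $t$ in a dominating family can be differentiated in the parameter directions to yield, for general $t_0$, a divisor of multiplicity still $> k/(n+\delta)$ at $\sigma(t_0)$ whose support does \emph{not} contain $V_{t_0}$ --- and your description of it is garbled: you speak of producing a section ``vanishing along all of $\sigma(T)$,'' but $\sigma(T)$ is dense in $X$ under your dominance hypothesis, so such a section would be zero. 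What is actually needed is to expel $V_{t_0}$ from the zero locus while retaining enough point multiplicity, and the bookkeeping of how much multiplicity each differentiation costs is exactly where the constant $n$ comes from. Without this lemma the proposal is a strategy, not a proof.

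Second, the opening reduction does not prove the statement as written. The negation of ``the good locus contains an open dense set'' is only that $B_\delta$ is Zariski dense, and a Zariski-dense subset of $X(\CC)$ can be countable; even when $B_\delta$ is uncountable, distributing it among the countably many Chow components only guarantees that one component carries uncountably many witnesses, whose marked points may still sweep out a proper closed subvariety. In that case your argument produces no dominating family and no contradiction, yet the open-dense conclusion can still fail. What your contradiction scheme can deliver is the \emph{very general} statement ($\varepsilon(L;x)\ge 1/n$ off a countable union of proper subvarieties, which is the literal \cite[Theorem 1]{EKL95}); the upgrade to an open dense set, at the cost of the $\delta$, requires the separate semicontinuity argument of \cite[Lemma 1.4]{EKL95}, which your sketch omits entirely. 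Two smaller points: the characterization of $\varepsilon(L;x)$ you start from is misstated --- the multiplicity belongs inside the root, $\varepsilon(L;x)\le \bigl(L^{\dim V}\cdot V/\mathrm{mult}_xV\bigr)^{1/\dim V}$, which changes your extracted inequality for $d\ge 2$ --- and nothing is gained by allowing $V$ of all dimensions, since the infimum is already computed by curves, which is what Ein--K\"uchle--Lazarsfeld use.

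Here $\CC$ should be read as $\mathbb{C}$.
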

\begin{remark}\label{rem:betterbounds}
  If in the notation of Theorem \ref{thm:ekl95}, we also assume that $X$ is
  smooth and $L$ is ample, then better lower bounds are known if $n = 2,3$.
  Under these additional assumptions, the locus
  \[
    \biggl\{ x \in X \biggm\vert \varepsilon(L;x) > \frac{1}{(n-1)+\delta}
    \biggr\}
  \]
  contains an open dense set if $n =2$ \cite[Theorem]{EL93} or $n=3$
  \cite[Theorem 1.2]{CN14}.
  Here, we use \cite[Lemma 1.4]{EKL95} to obtain results for general points from
  the cited results, which are stated for very general points.
  In general, it is conjectured that in the
  situation of Theorem \ref{thm:ekl95}, the locus
  \[
    \biggl\{ x \in X \biggm\vert \varepsilon(L;x) > \frac{1}{1+\delta}
    \biggr\}
  \]
  contains an open dense set \cite[Conjecture 5.2.5]{Laz04a}.
\end{remark}

\subsection{The stable and augmented base locus}
In order to deal with big and nef line bundles in Theorems \ref{thm:sing} and \ref{thm:dc}, we will need some facts about
base loci, following \cite{ELMNP09}.
We start with the following:
\begin{definition}\label{def:baseloci}
  Let $X$ be a projective variety.
  If $L$ is a $\QQ$-Cartier $\QQ$-divisor on $X$, then the \textsl{stable base
  locus} of $L$ is the closed set
  \begin{align*}
    \SB(L) &\coloneqq \bigcap_m \Bs\lvert mL \rvert_\red,
    \intertext{where $m$ runs over all integers such that $mL$ is Cartier.
    If $L$ is an $\RR$-Cartier $\RR$-divisor on $X$, the \textsl{augmented base
    locus} of $L$ is the closed set}
    \Bsp(L) &\coloneqq \bigcap_A \SB(L - A)
  \end{align*}
  where $A$ runs over all ample $\RR$-Cartier $\RR$-divisors $A$ such that $L-A$
  is $\QQ$-Cartier.
  By definition, if $L$ is a $\QQ$-Cartier $\QQ$-divisor, then
  \[
    \SB(L) \subseteq \Bsp(L).
  \]
\end{definition}
Note that $\Bsp(L) \ne X$ if and only if $L$ is big by Kodaira's lemma
\cite[Proposition 2.2.22]{Laz04a}.
\medskip
\par We will also need the following result, which shows how augmented base loci
and Seshadri constants are related.
The result follows from \cite[\S6]{ELMNP09} if the scheme $X$ is a smooth
variety, but we will need it more generally for singular varieties.
\begin{corollary}\label{cor:seshadribs}
  Let $X$ be a projective variety, and let $x
  \in X$ be a closed point.
  Suppose $L$ is a big and nef $\QQ$-Cartier $\QQ$-divisor.
  If $\varepsilon(L;x) > 0$, then $x \notin \Bsp(L)$.
\end{corollary}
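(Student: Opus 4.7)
The plan is to reduce the corollary to a null-locus characterization of the augmented base locus, via Birkar's generalization of Nakamaye's theorem \cite[Theorem 1.4]{Bir17}, which is already invoked elsewhere in the introduction for exactly this purpose of handling singular ambient varieties. This theorem asserts that for a big and nef $\QQ$-Cartier $\QQ$-divisor $L$ on a projective variety $X$,
\[
  \Bsp(L) \;=\; \mathrm{Null}(L) \;\coloneqq\; \bigcup_{\substack{V \subseteq X,\ \dim V \geq 1\\ (L\rvert_V)^{\dim V} = 0}} V.
\]
Thus it suffices to show that whenever $\varepsilon(L;x) > 0$, no positive-dimensional subvariety $V \subseteq X$ containing $x$ satisfies $(L\rvert_V)^{\dim V} = 0$.

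To verify this, I would fix an arbitrary positive-dimensional subvariety $V \ni x$ of dimension $d$, and consider the blow-up $\mu \colon X' \to X$ at $x$ with exceptional divisor $E$. Let $V' \subset X'$ denote the strict transform and $\nu \coloneqq \mu\rvert_{V'} \colon V' \to V$. By hypothesis, there exists a positive rational $t$ for which $\mu^*L - tE$ is nef on $X'$, and nefness is preserved by restriction, so $\nu^*(L\rvert_V) - t\,E\rvert_{V'}$ is a nef $\QQ$-divisor on $V'$. Taking its top self-intersection yields
\[
  0 \;\leq\; \bigl(\nu^*(L\rvert_V) - t\,E\rvert_{V'}\bigr)^{d}.
\]
Expanding the right-hand side and using that $\nu^*(L\rvert_V)$ is trivial on the exceptional locus of $\nu$ (which contracts to the point $x$), all mixed intersection numbers vanish and only the extreme terms survive, producing an inequality of the form
\[
  0 \;\leq\; L^{d} \cdot V \;-\; t^{d}\cdot\mathrm{mult}_{x}(V),
\]
where the multiplicity term arises from $(E\rvert_{V'})^{d}$ via the Hilbert--Samuel interpretation of $\mathrm{mult}_x$. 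Since $\mathrm{mult}_{x}(V) \geq 1$, this shows $L^{d} \cdot V > 0$, so $V \not\subseteq \mathrm{Null}(L)$.

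The main technical step is the intersection-theoretic computation on the possibly singular strict transform $V'$; however, this is the classical multiplicity comparison underlying all Seshadri-constant arguments (cf.\ \cite[Proposition 5.1.9]{Laz04a}) and extends to arbitrary subvarieties through the definition of $\mathrm{mult}_{x}$ via blow-ups. Everything else is formal once Birkar's theorem is in hand: combining the two steps, $x \notin \mathrm{Null}(L) = \Bsp(L)$, which is the desired conclusion. The advantage of this route over the smooth-case argument in \cite[\S6]{ELMNP09} is that it avoids any appeal to asymptotic multiplier ideals, which are delicate on singular varieties.
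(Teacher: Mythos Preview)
Your proposal is correct and follows essentially the same route as the paper: both reduce to Birkar's null-locus description of $\Bsp(L)$ and then use the inequality $\varepsilon(L;x)^{\dim V} \le (L^{\dim V}\cdot V)/\mathrm{mult}_x V$ to rule out $x$ lying on a null subvariety. The only difference is cosmetic---the paper simply cites \cite[Proposition~5.1.9]{Laz04a} for this inequality, whereas you unpack its proof via the blow-up computation.
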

\begin{proof}
  If $x \in \Bsp(L)$, then by \cite[Theorem 1.4]{Bir17} there exists
  a closed subvariety $V \subseteq X$ containing $x$ such that $L^{\dim V}
  \cdot V = 0$, in which case $\varepsilon(L;x) = 0$ by
  \cite[Proposition 5.1.9]{Laz04a}.
\end{proof}

\section{An Extension Theorem}
We now turn to the proof of Theorem \ref{thm:dc}.
The proof relies on the following application of
cohomology and base change.
\begin{lemma}\label{lem:cohbasechangeapp}
  Let $f\colon Y \to X$ be a proper morphism of separated noetherian schemes,
  and let $\sF$ be a coherent sheaf on $Y$.
  Let $x \in X$ be a point that has an open neighborhood
  $U \subseteq X$ where $\sF\bigr\rvert_{f^{-1}(U)}$ is flat over $U$.
  Consider the following cartesian square: 
  \[
    \begin{tikzcd}
      Y_x \rar\dar & Y\dar{f}\\
      \Spec\bigl(\kappa(x)\bigr) \rar & X
    \end{tikzcd}
  \]
  If the restriction map $H^0(Y,\sF) \to H^0(Y_x,\sF\bigr\rvert_{Y_x})$ is
  surjective, then the restriction map
  \[
    H^0(X,f_*\sF) \longrightarrow f_*\sF \otimes_{\cO_X} \kappa(x)
  \]
  is also surjective.
\end{lemma}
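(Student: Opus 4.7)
The plan is to factor the given restriction map through the canonical base change map and then invoke Grothendieck's theorem on cohomology and base change.

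First, I would observe that the restriction map $H^0(Y,\sF) = H^0(X, f_*\sF) \to H^0(Y_x, \sF\rvert_{Y_x})$ factors as
\[
H^0(X, f_*\sF) \xrightarrow{\mathrm{ev}_x} (f_*\sF) \otimes_{\cO_X} \kappa(x) \xrightarrow{\phi^0(x)} H^0(Y_x, \sF\rvert_{Y_x}),
\]
where $\mathrm{ev}_x$ is evaluation at the closed fiber over $x$, and $\phi^0(x)$ is the canonical base change map associated to the given cartesian square. Since the composition is surjective by hypothesis, the map $\phi^0(x)$ must already be surjective.

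Next, I would apply the theorem on cohomology and base change (for instance in the form of Hartshorne III.12.11, with the extension to proper morphisms recorded in EGA III). To do so, I would first replace $X$ by the open neighborhood $U$ of $x$ on which $\sF\rvert_{f^{-1}(U)}$ is flat over the base; this replacement does not affect the stalk $(f_*\sF)_x$, the fiber $Y_x$, or the map $\phi^0(x)$, but it puts us in the setting where the flatness hypothesis of the theorem holds over the whole base. The theorem then upgrades the surjectivity of $\phi^0(x)$ to an isomorphism.

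Finally, once $\phi^0(x)$ is identified as an isomorphism, the surjectivity of the composition immediately forces $\mathrm{ev}_x$ to be surjective, which is precisely the conclusion of the lemma. There is no serious obstacle here; the only mild subtlety is the passage from the local flatness hypothesis to a genuine input for cohomology and base change, and this is handled by the shrinking step above.
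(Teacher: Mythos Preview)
Your proposal is correct and follows essentially the same approach as the paper: factor the restriction map through the base change morphism $\phi^0(x)$, deduce its surjectivity from the hypothesis, then invoke cohomology and base change (after shrinking to $U$ for flatness) to upgrade this to an isomorphism and conclude that $\mathrm{ev}_x$ is surjective. The paper's proof differs only in presentation, packaging the same steps into a commutative diagram and citing \cite[Corollary 8.3.11]{Ill05} rather than Hartshorne.
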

\begin{proof}
  Let $f_U \coloneqq f\bigr\rvert_{f^{-1}(U)}$ and $\sF_U
  \coloneqq \sF\bigr\rvert_{f^{-1}(U)}$.
  We have the commutative diagram
  \[
    \begin{tikzcd}
      H^0(X,f_*\sF) \rar\arrow[equal]{dd} & f_*\sF \otimes_{\cO_X}
      \kappa(x)\arrow{d}[sloped,below]{\sim}[right]{\beta}\\
      & f_{U*}\sF_U
      \otimes_{\cO_U} \kappa(x)\dar{\alpha^0(x)}\\
      H^0(Y,\sF) \rar[twoheadrightarrow] & H^0(Y_x,\sF\bigr\rvert_{Y_x})
    \end{tikzcd}
  \]
  where the bottom arrow is surjective by assumption, $\beta$ is an isomorphism
  by computing affine-locally, and
  $\alpha^0(x)$ is the natural base change map \cite[(8.3.2.3)]{Ill05}.
  By the commutativity of the diagram, this map $\alpha^0(x)$ is surjective,
  hence is an isomorphism by cohomology and base change \cite[Corollary
  8.3.11]{Ill05}.
  Thus, the top horizontal arrow is also surjective.
\end{proof}
Before proving Theorem \ref{thm:dc}, we first explain how to deduce a generic
global generation statement for arbitrary log canonical $\RR$-pairs $(Y,\Delta)$
from Theorem \ref{thm:dc} by passing to a log resolution.
\begin{corollary}\label{cor:dclcsing}
  Let $f\colon Y \to X$ be a surjective morphism of projective varieties, where
  $X$ is of dimension $n$.
  Let $(Y,\Delta)$ be a log canonical $\RR$-pair,
  and let $L$ be an big and nef $\QQ$-Cartier $\QQ$-divisor on $X$.
  Let $\ell$ be a real number for which there
  exists a Cartier divisor $P_\ell$ on $Y$ such that
  \[
    P_\ell \sim_\RR K_Y+\Delta+\ell f^*L.
  \]
  If $\ell > \frac{n}{\varepsilon(L;x)}$ for general $x \in X$, then
  the sheaf $f_*\cO_Y(P_\ell)$ is generically globally generated.
\end{corollary}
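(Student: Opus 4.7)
The plan is to reduce the log canonical case to Theorem~\ref{thm:dc} by passing to a suitable log resolution. Let $\mu\colon \widetilde{Y}\to Y$ be a log resolution of $(Y,\Delta)$ and set $\widetilde{f} \coloneqq f\circ\mu\colon \widetilde{Y}\to X$. Applying Lemma~\ref{lem:logres} to $(Y,\Delta)$ with $H = \ell f^*L$, I obtain an $\RR$-divisor $\widetilde{\Delta}$ on $\widetilde{Y}$ with simple normal crossings support and coefficients in $(0,1]$, together with a Cartier divisor $\widetilde{P}_\ell$ on $\widetilde{Y}$, such that
\[
  \widetilde{P}_\ell \sim_\RR K_{\widetilde{Y}}+\widetilde{\Delta}+\ell\,\widetilde{f}^*L
  \qquad\text{and}\qquad
  \mu_*\cO_{\widetilde{Y}}(\widetilde{P}_\ell)\simeq \cO_Y(P_\ell).
\]
In particular, $\widetilde{f}_*\cO_{\widetilde{Y}}(\widetilde{P}_\ell)\simeq f_*\cO_Y(P_\ell)$ by the projection formula, so it suffices to prove that the former is generically globally generated.

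Next, I would check that the hypotheses of Theorem~\ref{thm:dc} are satisfied for $\widetilde{f}$, $\widetilde{\Delta}$, $L$, and $\widetilde{P}_\ell$. The only non-trivial requirement is to exhibit a closed point $x$ that lies simultaneously in $U(\widetilde{f},\widetilde{\Delta})$ and in the Seshadri locus $\{x\in X \mid \ell > n/\varepsilon(L;x)\}$. The open set $U(\widetilde{f},\widetilde{\Delta})$ is non-empty by Notation~\ref{notn:goodopen}$(\ref{notn:goodopena})$, and by hypothesis the Seshadri condition holds for general $x \in X$. Their intersection therefore contains a dense open subset of $X$, and I may pick a closed point $x$ in it.

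Applying Theorem~\ref{thm:dc} then yields that $\widetilde{f}_*\cO_{\widetilde{Y}}(\widetilde{P}_\ell)$ is globally generated at $x$; transporting this along the isomorphism with $f_*\cO_Y(P_\ell)$, global generation holds at $x$, and hence in a Zariski neighborhood of $x$ by Nakayama's lemma. Since the chosen $x$ ranges over a dense open subset of $X$, the sheaf $f_*\cO_Y(P_\ell)$ is generically globally generated, as desired.

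I do not expect any real obstacle: the argument is essentially bookkeeping, with Lemma~\ref{lem:logres} doing the work of transferring the log canonical pair $(Y,\Delta)$ to a pair $(\widetilde{Y},\widetilde{\Delta})$ satisfying the hypotheses of Theorem~\ref{thm:dc}, and the openness of both the good locus $U(\widetilde{f},\widetilde{\Delta})$ and the Seshadri locus ensuring we can pick a test point $x$.
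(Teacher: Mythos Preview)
Your proposal is correct and follows essentially the same route as the paper: pass to a log resolution via Lemma~\ref{lem:logres} with $H=\ell f^*L$, identify $\widetilde{f}_*\cO_{\widetilde{Y}}(\widetilde{P}_\ell)\simeq f_*\cO_Y(P_\ell)$, and apply Theorem~\ref{thm:dc} at points in $U(\widetilde{f},\widetilde{\Delta})$ where the Seshadri inequality holds. The paper phrases the transfer of global generation through a small commutative diagram and applies Theorem~\ref{thm:dc} at every point of the open set directly, whereas you invoke Nakayama's lemma after a single point; both are fine, and your Nakayama step is harmless though not strictly needed since Theorem~\ref{thm:dc} already gives global generation at every such $x$.
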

\begin{proof}
  Applying Lemma \ref{lem:logres} for $H = \ell f^*L$ to a log resolution
  $\mu\colon \widetilde{Y} \to Y$ of $(Y,\Delta)$, we have
  the following commutative diagram:
  \[
    \begin{tikzcd}
      H^0\bigl(X,(f \circ \mu)_*\cO_{\widetilde{Y}}(\widetilde{P}_\ell)
      \bigr) \rar & (f \circ
      \mu)_*\cO_{\widetilde{Y}}(\widetilde{P}_\ell)
      \otimes \kappa(x)\\
      H^0\bigl(X,f_*\cO_Y(P_\ell) \bigr) \rar
      \arrow{u}[sloped,below]{\sim} &
      f_*\cO_Y(P_\ell) \otimes \kappa(x)
      \arrow{u}[sloped,below]{\sim}
    \end{tikzcd}
  \]
  where $\widetilde{P}_\ell$ is the divisor on $\widetilde{Y}$ satisfying the
  properties in Lemma \ref{lem:logres}.
  Then, Theorem \ref{thm:dc} for $(\widetilde{Y},\widetilde{\Delta})$ implies
  that for some open subset $U \subseteq X$, the top horizontal arrow is
  surjective for all closed points $x \in U$ such that $\ell >
  \frac{n}{\varepsilon(L;x)}$, hence the bottom horizontal arrow
  is also surjective at these closed points $x$.
  We therefore conclude that $f_*\cO_Y(P_\ell)$ is generically globally
  generated.
\end{proof}
To prove Theorem \ref{thm:dc}, we need the following result on augmented base
loci.
\begin{lemma}\label{lem:nakamaye}
  Let $X$ be a projective variety of dimension $n$, and let $L$ be a big and nef $\RR$-Cartier
  $\RR$-divisor on $X$.
  Let $x \in X$ be a closed point, and suppose $\varepsilon(L;x) > 0$.
  Let $\mu\colon X' \to X$ be the blow-up of $X$ at $x$ with exceptional divisor
  $E$.
  For every positive real number $\delta < \varepsilon(L;x)$, we have
  \[
    \Bsp(\mu^*L - \delta E) \cap E = \emptyset.
  \]
  In particular, if $\mu^*L - \delta E$ is a $\QQ$-Cartier $\QQ$-divisor, then
  \[
    \Bs\bigl\lvert m(\mu^*L - \delta E)\bigr\rvert \cap E = \emptyset
  \]
  for all sufficiently large and divisible integers $m$.
\end{lemma}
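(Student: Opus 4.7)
The plan is to apply Birkar's generalization of Nakamaye's theorem \cite[Theorem 1.4]{Bir17} to the divisor $M \coloneqq \mu^*L - \delta E$ on $X'$ and rule out any subvariety in the null locus of $M$ that meets $E$. First I would verify that $M$ is big and nef. Fix $\delta' \in (\delta, \varepsilon(L;x))$; by definition of the Seshadri constant, $\mu^*L - \delta'E$ is nef, and the identity
\[
  M = \frac{\delta}{\delta'}(\mu^*L - \delta'E) + \left(1 - \frac{\delta}{\delta'}\right)\mu^*L
\]
writes $M$ as a positive $\RR$-linear combination of nef divisors, so $M$ is nef. Moreover $\mu^*L$ is big (as the pullback of a big divisor under a birational morphism), hence so is $M$.

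By Birkar's theorem, $\Bsp(M)$ equals the union of positive-dimensional subvarieties $V \subseteq X'$ with $M^{\dim V} \cdot V = 0$, and I would argue by contradiction that no such $V$ meets $E$. Expanding $M^{\dim V} \cdot V$ using the convex combination above, each summand $(\mu^*L - \delta'E)^i(\mu^*L)^{\dim V - i} \cdot V$ is a nonnegative intersection of nef divisors with the cycle $V$, so the vanishing of the sum forces each summand to vanish; in particular $(\mu^*L)^{\dim V} \cdot V = 0$. If $V \not\subseteq E$, then $\mu\rvert_V$ is generically finite onto $W \coloneqq \mu(V)$ with $\dim W = \dim V$, so $L^{\dim W} \cdot W = 0$; since $V$ meets $E$ we have $x \in W$, and Birkar's theorem applied to the big and nef $L$ places $x \in \Bsp(L)$, contradicting Corollary \ref{cor:seshadribs}. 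If instead $V \subseteq E$, then $\mu\rvert_V$ is constant, so $\mu^*L\rvert_V = 0$ and $M^{\dim V} \cdot V = \delta^{\dim V}(-E\rvert_V)^{\dim V}$. The sheaf $\cO_{X'}(-E)\rvert_E$ is the tautological ample line bundle on $E \simeq \mathrm{Proj}\bigl(\bigoplus_{n \ge 0}\mathfrak{m}_x^n/\mathfrak{m}_x^{n+1}\bigr)$, so $(-E\rvert_V)^{\dim V} > 0$, again a contradiction.

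The ``in particular'' statement then follows from $\SB(M) \subseteq \Bsp(M)$ together with the fact that $\Bs\lvert mM\rvert_\red$ stabilizes to $\SB(M)$ for $m$ sufficiently large and divisible. The main subtlety is the second case in the null-locus analysis; the key observation making it work in the potentially singular setting of the lemma is that the ampleness of $\cO_{X'}(-E)\rvert_E$ is intrinsic to the blow-up construction and does not require $X$ to be smooth at $x$.
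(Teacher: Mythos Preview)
Your proof is correct and follows the same overall architecture as the paper's: verify $M = \mu^*L - \delta E$ is big and nef via a convex combination, apply Birkar's theorem on $X'$, and rule out null-locus subvarieties $V$ meeting $E$ by splitting into the cases $V \subseteq E$ and $V \not\subseteq E$. The $V \subseteq E$ case and the final ``in particular'' clause are handled identically.

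The one genuine difference is in the $V \not\subseteq E$ case. You argue indirectly: from $M^{\dim V}\cdot V = 0$ you extract the single term $(\mu^*L)^{\dim V}\cdot V = 0$, push it down to $L^{\dim W}\cdot W = 0$ on $X$, and then invoke Birkar's theorem a second time together with Corollary~\ref{cor:seshadribs} to reach a contradiction. The paper instead bounds $M^{\dim V}\cdot V$ directly from below: expanding via the convex combination and dropping the nonnegative cross-terms gives
\[
  M^{\dim V}\cdot V \ge \Bigl(1 - \tfrac{\delta}{\varepsilon(L;x)}\Bigr)^{\dim V} (\mu^*L)^{\dim V}\cdot V
  = \Bigl(1 - \tfrac{\delta}{\varepsilon(L;x)}\Bigr)^{\dim V} L^{\dim V}\cdot V_0,
\]
which is positive by \cite[Proposition~5.1.9]{Laz04a} since $x \in V_0$ and $\varepsilon(L;x) > 0$. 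Your route is slightly more circuitous---Corollary~\ref{cor:seshadribs} itself rests on Birkar plus \cite[Proposition~5.1.9]{Laz04a}---but it has the virtue of reusing an already-packaged statement rather than unpacking the Seshadri inequality again. Either way the content is the same.
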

\begin{proof}
  First, the $\RR$-Cartier $\RR$-divisor $\mu^*L - \delta E$ is big and nef
  since
  \begin{equation}\label{lem:nakamayerequiv}
    \mu^*L - \delta E \sim_\RR \frac{\delta}{\varepsilon(L;x)} \bigl(\mu^*L -
    \varepsilon(L;x)E\bigr) + \biggl( 1 - \frac{\delta}{\varepsilon(L;x)}
    \biggr) \mu^*L
  \end{equation}
  is the sum of a nef $\RR$-Cartier $\RR$-divisor and a big and nef
  $\RR$-Cartier $\RR$-divisor.
  Thus, by \cite[Theorem 1.4]{Bir17}, we know that
  $\Bsp(\mu^*L - \delta E)$
  is the union of positive-dimensional
  closed subvarieties $V$ of $X'$ such that
  $(\mu^*L - \delta E)^{\dim V} \cdot V = 0$.
  \par It suffices to show such a $V$ cannot contain any point $y \in E$.
  First, if $V \subseteq E$, then
  \begin{align*}
    (\mu^*L - \delta E)^{\dim V} \cdot V &= (-\delta E)^{\dim V} \cdot V =
    \delta^{\dim V} (-E\rvert_E)^{\dim V} \cdot V > 0,
  \intertext{since $\cO_E(-E) \simeq \cO_E(1)$ is very ample.
  On the other hand, if $V \not\subseteq E$, then $V$ is the strict transform of
  some closed subvariety $V_0 \subseteq X$ containing $x$, and by
  \eqref{lem:nakamayerequiv}, we have}
    (\mu^*L - \delta E)^{\dim V} \cdot V
    &= \biggl( \frac{\delta}{\varepsilon(L;x)} \bigl(\mu^*L -
    \varepsilon(L;x)E\bigr) + \biggl( 1 - \frac{\delta}{\varepsilon(L;x)}
    \biggr) \mu^*L \biggr)^{\dim V} \cdot V\\
    &\ge \biggl( 1 - \frac{\delta}{\varepsilon(L;x)}
    \biggr)^{\dim V} (\mu^*L)^{\dim V} \cdot V\\
    &= \biggl( 1 - \frac{\delta}{\varepsilon(L;x)}
    \biggr)^{\dim V} L^{\dim V} \cdot V_0 > 0,
  \end{align*}
  where the first inequality is by nefness of $\mu^*L - \varepsilon(L;x)E$,
  and the last inequality is by \cite[Proposition 5.1.9]{Laz04a} and the
  condition $\varepsilon(L;x) > 0$.
  \par The last statement about base loci follows from the fact that
  \[
    \Bsp(\mu^*L - \delta E) \supseteq \SB(\mu^*L - \delta E) =
    \Bs\bigl\lvert m(\mu^*L - \delta E)\bigr\rvert_{\mathrm{red}}
  \]
  for all sufficiently large and divisible integers $m$, where the last equality
  holds by \cite[Proposition 2.1.21]{Laz04a} since $\mu^*L - \delta E$ is
  a $\QQ$-Cartier $\QQ$-divisor.
\end{proof}
Finally, we need the following cohomological injectivity theorem due to Fujino.
\begin{citedthm}[{\cite[Theorem 5.4.1]{Fuj17}}]\label{thm:fujinonj}
  Let $Y$ be a smooth complete variety and let $\Delta$ be an $\RR$-divisor on
  $Y$ with coefficients in $(0,1]$ and simple normal crossings support.
  Let $L$ be a Cartier divisor on $Y$ and let $D$ be an effective Weil divisor on
  $Y$ whose support is contained in $\Supp\Delta$.
  Assume that $L \sim_\RR K_Y + \Delta$.
  Then, the natural homomorphism
  \[
    H^i\bigl(Y,\cO_Y(L)\bigr) \longrightarrow H^i\bigl(Y,\cO_Y(L+D)\bigr)
  \]
  induced by the inclusion $\cO_Y \to \cO_Y(D)$ is injective for every $i$.
\end{citedthm}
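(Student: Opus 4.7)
The plan is to follow the strategy of Koll\'ar, Esnault--Viehweg, and Ambro that Fujino systematized. I would carry out four reductions in order.

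First, I would reduce to the case where $\Delta$ has rational coefficients. Since the coefficients of $\Delta$ lie in $(0,1]$ and $\Supp D \subseteq \Supp \Delta$, I can perturb $\Delta$ by a small $\RR$-divisor supported on $\Supp \Delta$ to obtain a $\QQ$-divisor $\Delta'$ still with coefficients in $(0,1]$ and with $L - (K_Y+\Delta')$ an $\RR$-Cartier divisor arbitrarily close to zero. Since both cohomology groups in the statement are finite-dimensional over $\mathbb{C}$, the injectivity for $\Delta$ follows from injectivity for all $\Delta'$ in a small neighborhood, via a compactness argument on the Grassmannian of kernels. Clearing denominators, I may assume $N\Delta$ is integral for some positive integer $N$.

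Second, I would take a Kawamata-style cyclic cover $\pi\colon Z \to Y$ (followed by a log resolution to restore smoothness and SNC structure) with Galois group $G$, such that $\pi^*\Delta$ is supported on a reduced simple normal crossings divisor $\Delta_Z$ on $Z$. The decomposition $\pi_*\cO_Z = \bigoplus_\chi (\pi_*\cO_Z)_\chi$ into $G$-eigenspaces distinguishes a character $\chi$ whose associated summand, combined with the projection formula, realizes $H^i(Y,\cO_Y(L))$ as a direct summand of $H^i(Z, \cO_Z(M))$ for a Cartier divisor $M$ on $Z$ satisfying $M \sim K_Z + \Delta_Z^{\red} + (\text{exceptional})$. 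Crucially, the inclusion $\cO_Y \hookrightarrow \cO_Y(D)$ pulls back to the inclusion $\cO_Z \hookrightarrow \cO_Z(\pi^*D)$, which is equivariant and hence splits into the analogous inclusions on each eigenspace.

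Third, I would invoke the classical injectivity theorem in the \emph{reduced} SNC case on $Z$: if $M$ is a Cartier divisor with $M \sim K_Z + \Delta_Z^{\red}$ and $D_Z$ is an effective Weil divisor with $\Supp D_Z \subseteq \Supp \Delta_Z^{\red}$, then the map $H^i(Z, \cO_Z(M)) \to H^i(Z, \cO_Z(M+D_Z))$ is injective. This follows from Deligne's $E_1$-degeneration of the Hodge-to-de Rham spectral sequence for the logarithmic de Rham complex $\Omega_Z^\bullet(\log \Delta_Z^{\red})$, combined with the residue identification $\Omega_Z^n(\log \Delta_Z^{\red}) \simeq \omega_Z \otimes \cO_Z(\Delta_Z^{\red})$: multiplication by the defining section of $D_Z$ acts as zero on the residues along components of $\Delta_Z^{\red}$, and strictness of the Hodge filtration in the resulting mixed Hodge structure upgrades this to injectivity at the $E_\infty$ level.

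Finally, I would descend to $Y$ by taking $\chi$-eigenspaces of the injective map on $Z$; functoriality of the decomposition with respect to the equivariant inclusion $\cO_Z \hookrightarrow \cO_Z(\pi^*D)$ yields the desired injectivity on $Y$. The main obstacle is the second step: the cyclic cover and its log resolution must be constructed so that the $\chi$-eigenspace identification matches the sheaf $\cO_Y(L)$ precisely (rather than some twist of it by exceptional divisors), and so that the exceptional corrections on $Z$ do not spoil the application of the reduced-case injectivity in step three. Handling this compatibly in the merely $\RR$-linearly equivalent setting (as opposed to Koll\'ar's original semi-ample hypothesis) is exactly the content of Fujino's refinement.
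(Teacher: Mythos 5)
This statement is not proved in the paper at all: it is quoted verbatim as Theorem 5.4.1 of \cite{Fuj17}, so there is no internal argument to compare yours against. Measured against Fujino's actual proof (and its antecedents in Koll\'ar and Esnault--Viehweg), your outline is the standard and correct one: reduce from $\RR$- to $\QQ$-coefficients, pass to a resolved cyclic cover killing the fractional part, prove the reduced-boundary case via $E_1$-degeneration of the logarithmic Hodge--de Rham spectral sequence and strictness for morphisms of mixed Hodge structures, and descend by taking eigenspaces. Two remarks. First, your $\RR$-to-$\QQ$ reduction is over-engineered: the map $H^i(Y,\cO_Y(L))\to H^i(Y,\cO_Y(L+D))$ does not depend on $\Delta$ --- the divisor enters only as a hypothesis --- so no compactness argument over a family of kernels is needed; it suffices to produce a \emph{single} $\QQ$-divisor $\Delta'$ with simple normal crossings support, $\Supp D\subseteq\Supp\Delta'$, coefficients in $(0,1]$, and $L\sim_\QQ K_Y+\Delta'$. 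Such a $\Delta'$ exists because, writing $L-(K_Y+\Delta)=\sum_i r_i(f_i)$, the admissible $\Delta'$ form a rational affine subspace containing $\Delta$; the one point needing care, which you elide, is that a generic small perturbation of a coefficient equal to $1$ could exceed $1$, so the perturbation must be chosen (as Fujino does) to fix the reduced part and move only the fractional coefficients. Second, as you yourself concede, all of the genuine content sits in your steps two and three --- identifying $\cO_Y(L)$ exactly with a $\chi$-eigensheaf of the pushforward from the resolved cover without spurious exceptional twists, and the precise Hodge-theoretic injectivity lemma for a reduced simple normal crossings boundary --- and your write-up gestures at these rather than carrying them out. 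As a roadmap to the cited proof it is accurate; as a self-contained proof it is incomplete at exactly the points you flag.
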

We can now prove Theorem \ref{thm:dc}.
\begin{proof}[Proof of Theorem \ref{thm:dc}]
  Fix $x \in U$, and consider the cartesian square
  \[
    \begin{tikzcd}
      Y' \rar{B}\dar[swap]{f'} & Y\dar{f}\\
      X' \rar{b} & X
    \end{tikzcd}
  \]
  where $b$ is the blow-up of $X$ at $x$.
  Since $f$ is flat in a neighborhood of $x$, the morphism $B$ can be identified
  with the blow-up of $Y$ along $Y_x$, which is a smooth subvariety of
  codimension $n$ \cite[\href{http://stacks.math.columbia.edu/tag/0805}{Tag
  0805}]{stacks-project}.
  Moreover, if $E$ is the exceptional divisor of $b$ and $D$ is the exceptional
  divisor of $B$, then $f^{\prime*}E = D$.
  By Lemma \ref{lem:cohbasechangeapp}, the surjectivity of \eqref{eq:thmcrest}
  in the statement of Theorem \ref{thm:dc}
  implies the generic global generation statement, so it suffices to show that
  the map in \eqref{eq:thmcrest} is surjective.
  \par First, we note that $(Y',B^*\Delta)$ is log canonical:
  since $Y_x$ intersects every component of $\Delta$ transversely,
  the pullback $B^*\Delta$ of $\Delta$ is equal to the strict transform
  $\Delta'$ of $\Delta$ \cite[Corollary 6.7.2]{Ful98}, and so in particular,
  $(Y',\Delta')$ is log canonical.
  \par Since $\varepsilon(L;x) > n/\ell$, we can choose a sufficiently small
  $\delta > 0$ such that $(n+\delta)/\ell \in \QQ$ and $\varepsilon(L;x) >
  (n+\delta)/\ell$.
  Thus, using the fact that $L$ is a $\QQ$-Cartier $\QQ$-divisor, for real
  numbers $m$ of the form $m_0/\ell$ for sufficiently large
  and divisible integers $m_0$, we have that $m(\ell b^*L -
  (n+\delta)E)$ is Cartier.
  Lemma \ref{lem:nakamaye} then implies
  \[
    S \coloneqq \Bs\bigl\lvert m(\ell b^*L -
    (n+\delta)E)\bigr\rvert_\mathrm{red}
  \]
  does not intersect $E$, i.e., $m(\ell b^*L - (n+\delta)E)$ is
  globally generated away from $S$, and in particular, is globally generated on
  an open set containing $E$.
  Thus, the pullback $m(\ell B^*f^*L - (n+\delta)D)$ of this divisor is
  globally generated away from $S' \coloneqq f^{\prime-1}(S)$, and in particular
  is globally generated on an open set containing $D$.
  Choose
  \[
    \mathfrak{D}_x \in \bigl\lvert m (\ell B^*f^*L - (n+\delta)D)
    \bigr\rvert
  \]
  which is smooth and irreducible away from $f^{\prime-1}(S)$,
  and is such that the component of $\mathfrak{D}_x$ not contained in
  $S'$ intersects each component of the support of $\Delta'$
  transversely away from $S'$.
  Note that such a choice is possible by applying Bertini's theorem
  \cite[Corollary III.10.9 and Remark III.10.9.3]{Har77}.
  Since $\mathfrak{D}_x$ may have singularities along $S'$, however, we will need
  to pass to a log resolution before applying Theorem \ref{thm:fujinonj}.
  \par By Theorem \ref{citedthm:Szabo}, there exists a common log resolution
  $\mu\colon\widetilde{Y} \to Y'$
  for $\mathfrak{D}_x$ and $(Y',\Delta')$ that is an isomorphism away
  from $f^{\prime-1}(S) \subsetneq Y'$.
  We then write
  \[
    \mu^*\mathfrak{D}_x = D' + F, \qquad \mu^*\Delta' = \mu_*^{-1}\Delta' + F_1
  \]
  where $D'$ is a smooth divisor intersecting $Y_x$ transversely and
  $F,F_1$ are supported on $\mu^{-1}(S')$.
  Define
  \[
    F' \coloneqq \biggl\lfloor \frac{1}{m} F + F_1 \biggr\rfloor, \qquad
    \widetilde{\Delta} \coloneqq \mu^*\Delta' + \frac{1}{m}\mu^*\mathfrak{D}_x
    - F' + \delta \mu^*D, \qquad
    \widetilde{P}_\ell \coloneqq \mu^*B^*P_\ell + K_{\widetilde{Y}/Y'}.
  \]
  Note that $\widetilde{\Delta}$ has simple normal crossings support containing
  $\mu^*D$, and has coefficients in $(0,1]$ by assumption on the log resolution
  and by definition of $F'$.
  Note also that
  \begin{align*}
    \widetilde{P}_\ell - F' &\sim_\RR \mu^*B^*(K_Y+\Delta + \ell f^*L)
    + K_{\widetilde{Y}/Y'} - F'\\
    &\sim_\RR K_{\widetilde{Y}} + \mu^*\Delta' - F' + \mu^*\bigl(\ell B^*f^*L -
    (n-1)D\bigr)\\
    &\sim_\RR K_{\widetilde{Y}} + \mu^*\Delta' + \frac{1}{m}\mu^*
    \mathfrak{D}_x - F' + (1+\delta)\mu^*D\\
    &\sim_\RR K_{\widetilde{Y}} + \widetilde{\Delta} + \mu^*D
  \end{align*}
  where the second equivalence follows from the fact that $B$ is the blow-up of
  the smooth subvariety $Y_x$, which is of codimension $n$, hence
  \[
    K_{\widetilde{Y}} = \mu^*K_{Y'} + K_{\widetilde{Y}/Y'} = \mu^*B^*K_{Y} +
    (n-1)\mu^*D + K_{\widetilde{Y}/Y'}.
  \]
  \par We can now apply the injectivity theorem \ref{thm:fujinonj} to $\widetilde{P}_\ell - F' -
  \mu^*D \sim_{\RR} K_{\widetilde{Y}} + \widetilde{\Delta}$ to see that
  \begin{equation}\label{eq:dcbninjapp}
    H^1\bigl(\widetilde{Y},\cO_{\widetilde{Y}}(\widetilde{P}_\ell - F' -
    \mu^*D) \bigr) \longrightarrow
    H^1\bigl(\widetilde{Y},\cO_{\widetilde{Y}}(\widetilde{P}_\ell - F'
    ) \bigr)
  \end{equation}
  is injective.
  Next, consider the following commutative diagram:
  \[
    \begin{tikzcd}
      H^0\bigl(\widetilde{Y},\cO_{\widetilde{Y}}(\widetilde{P}_\ell - F')
      \bigr) \rar[twoheadrightarrow]\dar[hook]
      & H^0\bigl(\mu^{*}(D),\cO_{\mu^{*}(D)}(\widetilde{P}_\ell - F') \bigr)
      \arrow[hook]{d}[sloped,above]{\sim}\\
      H^0\bigl(\widetilde{Y},\cO_{\widetilde{Y}}(\widetilde{P}_\ell)\bigr) \rar
      & H^0\bigl(\mu^{*}(D),\cO_{\mu^{*}(D)}(\widetilde{P}_\ell) \bigr)\\
      H^0\bigl(Y',\cO_{Y'}(B^*P_\ell)\bigr)
      \rar \arrow{u}[sloped,below]{\sim}
      & H^0\bigl(D,\cO_{D}(B^*P_\ell)\bigr)
      \arrow{u}[sloped,below]{\sim}\\
      H^0\bigl(Y,\cO_Y(P_\ell)\bigr) \rar\arrow{u}[sloped,below]{\sim}
      & H^0\bigl(Y_x,\cO_{Y_x}(P_\ell)\bigr)\arrow{u}[sloped,below]{\sim}
    \end{tikzcd}
  \]
  The top right vertical arrow is an isomorphism since $F'$ is disjoint
  from $\mu^{*}(D)$.
  The bottom right vertical arrow is an isomorphism since $B\rvert_D$ realizes
  $D$ as a projective bundle over $Y_x$, hence $(B\rvert_D)_*\cO_D \simeq
  \cO_{Y_x}$.
  The other vertical isomorphisms follow from the projection formula and the
  fact that $\mu$ and $B$ are birational.
  Finally, the top horizontal arrow is surjective by the long exact sequence on
  cohomology and the injectivity of \eqref{eq:dcbninjapp}.
  The commutativity of the diagram implies the bottom row is surjective, which
  is exactly the map in \eqref{eq:thmcrest}.
\end{proof}

\addtocontents{toc}{\protect\setcounter{tocdepth}{2}}
\section{Effective Twisted Weak Positivity}
We now prove Theorem \ref{thm:wpnc} using Viehweg's fiber product trick.
This trick enables us to reduce the global generation of
the reflexivized $s$-fold tensor product 
$f_*\cO_Y\big(k(K_Y+\Delta)\big)^{[s]}$ to $s=1$ with $Y$ replaced by a
suitable $\widetilde{Y}^s$. The main obstacle is picking a suitable
boundary divisor on $\widetilde{Y}^s$. We tackle this using Theorem \ref{thm:ps14}.
Readers are encouraged to consult \cite[\S4]{PS14}, \cite[\S3]{Vie83}, or
\cite[\S3]{Horing}.
\par Throughout the proof we use $\cO_X(K_X)$ and $\omega_X$ interchangeably whenever
$X$ is a normal variety. We can do so by Lemma \ref{lem:canonicalnormal}.

\begin{proof}[Proof of Theorem \ref{thm:wpnc}]\label{pf:wpnc}
For every positive integer $s$, let $Y^s$ denote the reduction of the unique
irreducible component of 
\[
  \underbrace{Y\times_XY\times_X\cdots\times_XY}_{s\ \text{times}} 
\]
that surjects onto $X$; note that it is unique since $f$ has irreducible generic
fiber. Denoting $V \coloneqq f^{-1}(U)$, 
we define $V^s$ similarly.

Let $d\colon Y^{(s)} \to Y^s$ be a desingularization of $Y^s$, and note that $d$
is an isomorphism over $V^s$.
We will also denote by $V^s$ the image of $V^s$ under any birational
modification of $Y^s$ which is an isomorphism along $V^s$.
Denote $d_i =   \pi_i \circ d$ for $i\in \{1,2,\ldots,s\}$, where $\pi_i\colon Y^s
\to Y$ is the $i$\textsuperscript{th} projection. Since $d_i$ is a 
surjective morphism between integral varieties, the pullback $d_i^*\Delta_j$ of the Cartier divisor $\Delta_j$ is well 
defined for every component $\Delta_j$ of $\Delta$ (see \cite[\href{http://stacks.math.columbia.edu/tag/02OO}{Tag 02OO}(1)]{stacks-project}). 
\par Let $\mu\colon \widetilde{Y}^s \to Y^{(s)}$ be a log resolution as in Theorem \ref{citedthm:Szabo} of the pair
$\bigl(Y^{(s)}, \sum_id_i^*\Delta\bigr)$
so that $\mu$ is an isomorphism over $V^s$.
Denote 
\[
  \widetilde{\Delta} = \mu^* \displaystyle\sum_id_i^*\Delta.
\]

\begin{claim}\label{step:wpnc2}
  There exists a map
  \begin{equation}\label{eq:wpncstep2}
    \widetilde{f}^s_*\cO_{\widetilde{Y}^s}\bigl(k(K_{\widetilde{Y}^s/ X}+
    \widetilde{\Delta})\bigr) \longrightarrow
    \bigl(f_*\cO_Y\bigl(k(K_{Y/X}+\Delta)\bigr)\bigr)^{[s]}
  \end{equation}
  which is an isomorphism over $U$.
\end{claim}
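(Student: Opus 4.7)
The plan is to construct the map in two stages: first identify the two sheaves over $U$ using Viehweg's fiber product trick (flat base change), then extend to a globally defined morphism on $X$ by invoking the pluri-trace map and the reflexivity of the target.

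\emph{Stage one: identification on $U$.} Because $U\subseteq U(f,\Delta)$, the restriction $f|_V\colon V\to U$ is smooth with irreducible fibers, so the $s$-fold fiber product $V\times_U\cdots\times_U V$ is smooth, irreducible, and coincides with $V^s$. Consequently both $d$ and $\mu$ are isomorphisms above $V^s$, and $\widetilde{Y}^s|_U\simeq V^s$. On $V^s$ one has
\[
K_{V^{s}/U}=\sum_{i=1}^{s}\pi_i^{*}K_{V/U},\qquad \widetilde{\Delta}\big|_{V^{s}}=\sum_{i=1}^{s}\pi_i^{*}(\Delta|_V),
\]
and $s$ applications of flat base change for the smooth morphism $f|_V$ yield a canonical Künneth-type isomorphism
\[
\widetilde{f}^{s}_{*}\cO_{\widetilde{Y}^{s}}\bigl(k(K_{\widetilde{Y}^{s}/X}+\widetilde{\Delta})\bigr)\big|_{U}\;\simeq\;\bigotimes_{i=1}^{s} f_{*}\cO_{Y}\bigl(k(K_{Y/X}+\Delta)\bigr)\big|_{U}.
\]
Since $U$ lies in the locally free locus of $f_{*}\cO_{Y}(P)$, the right-hand side already agrees with $\bigl(f_{*}\cO_{Y}(k(K_{Y/X}+\Delta))\bigr)^{[s]}\big|_{U}$ by Notation~\ref{notn:wp}. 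This is the standard first step of Viehweg's fiber product trick; compare \cite[Theorem~4.2]{PS14}.

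\emph{Stage two: globalization.} To produce the map on all of $X$, I would apply the pluri-trace map of Lemma~\ref{lem:pluritr} to the birational morphism $\mu\colon\widetilde{Y}^{s}\to Y^{(s)}$; this is legitimate because $\widetilde{Y}^{s}$ is smooth and $Y^{(s)}$ is smooth, hence Gorenstein. Writing
\[
k(K_{\widetilde{Y}^{s}/X}+\widetilde{\Delta})\sim_{\RR}\sum_{i=1}^{s}\widetilde{p}_i^{*}\bigl(k(K_{Y/X}+\Delta)\bigr)+E,
\]
with $E$ an effective divisor supported outside $V^{s}$, the pluri-trace map contracts the $\mu$-exceptional contribution in $E$, and combined with the natural multiplication map on pushforwards this assembles into a morphism
\[
\widetilde{f}^{s}_{*}\cO_{\widetilde{Y}^{s}}\bigl(k(K_{\widetilde{Y}^{s}/X}+\widetilde{\Delta})\bigr)\longrightarrow \bigl(f_{*}\cO_{Y}(k(K_{Y/X}+\Delta))\bigr)^{[s]}
\]
of coherent sheaves on $X$ whose restriction to $U$ recovers the Künneth isomorphism of stage one.

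The main obstacle is the globalization step: the Künneth isomorphism is defined only on $U$, whose complement in $X$ may have codimension one, so one cannot simply extend via Corollary~\ref{cor:extendsections}. The pluri-trace map of Lemma~\ref{lem:pluritr} gets around this by producing a morphism that is naturally defined globally on $X$, after which the reflexivity of the target guarantees that this morphism is uniquely determined by its restriction to any dense open subset, in particular to $U$.
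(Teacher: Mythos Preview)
Your Stage one is fine and matches the paper. The gap is in Stage two. Applying Lemma~\ref{lem:pluritr} to $\mu\colon\widetilde{Y}^s\to Y^{(s)}$ only gives you a map into $\omega_{Y^{(s)}}^{\otimes k}$; it does not produce a map to $\bigl(f_*\cO_Y(k(K_{Y/X}+\Delta))\bigr)^{[s]}$. The divisor $E$ in your displayed $\RR$-linear equivalence is not $\mu$-exceptional in general---it also contains the $d$-exceptional contribution---so the pluri-trace for $\mu$ does not contract it. And the ``natural multiplication map on pushforwards'' you invoke is precisely a K\"unneth/flat-base-change map, which you yourself note is only available over $U$; you have not explained how it is defined over all of $X$. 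So your argument is circular at this point: you need a globally defined map to the tensor product, and you have not constructed one.

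The paper's fix is to introduce an intermediate open set $X_0\subseteq X$ (the locus where $f$ is flat, $X$ is regular, and $f_*\cO_Y(k(K_{Y/X}+\Delta))$ is locally free) whose complement has codimension $\ge 2$. Over $X_0$ the fiber product $Y_0^s$ is Gorenstein, so one can apply Lemma~\ref{lem:pluritr} to the full composite $d\circ\mu\colon\widetilde{Y}^s_0\to Y^s_0$, landing directly on $\omega_{Y^s_0/X_0}^{\otimes k}$ twisted by $\bigotimes_i\pi_i^*\sM$. Because $f$ is flat over $X_0$, the K\"unneth-type induction genuinely identifies the pushforward of this sheaf with $\bigl(f_*\cO_{Y_0}(k(K_{Y_0/X_0}+\Delta))\bigr)^{\otimes s}$. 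Now one has an honest map over $X_0$, and since $\codim(X\smallsetminus X_0)\ge 2$ and the target is reflexive, Corollary~\ref{cor:extendsections} extends it to all of $X$. The point you were missing is that the extension argument is run from $X_0$, not from $U$.
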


  Let $X_0$ be the open set in $X$ such that
  \begin{itemize}\label{assumptions}
    \item The map $f$ is flat over $X_0$;
    \item The regular locus of $X$ contains $X_0$; and 
    \item The sheaf $f_*\cO_Y(k(K_{Y/X}+\Delta))$ is locally free over $X_0$.
  \end{itemize}
  Then, $\codim(X\smallsetminus X_0) \geq 2$. Indeed, $X$ is normal and both $f_*\cO_Y$ and $f_*\cO_Y(k(K_{Y/X}+\Delta))$ 
	are torsion-free.
Now by construction, we have $U \subseteq X_0$.
Since $(f_*\cO_Y(k(K_{Y/X}+\Delta)))^{[s]}$ is reflexive and
is isomorphic to $(f_*\cO_Y(k(K_{Y/X}+\Delta)))^{\otimes s}$
on $X_0$, a map
\[
  \widetilde{f}^s_*\cO_{\widetilde{Y}^s}\bigl(k(K_{\widetilde{Y}^s/ X}+
  \widetilde{\Delta})\bigr) \longrightarrow
  \bigl(f_*\cO_Y\bigl(k(K_{Y/X}+\Delta)\bigr)\bigr)^{\otimes s}
\]
over $X_0$ will extend to a map of the form in \eqref{eq:wpncstep2} on $X$ by
Corollary \ref{cor:extendsections}.
This together with flat base change
\cite[Proposition III.9.3]{Har77}, implies that it suffices to construct a map
\[
  \widetilde{f}^s_*\cO_{\widetilde{Y}^s_0}\bigl(k(K_{\widetilde{Y}_0^s/ X_0}+
  \widetilde{\Delta}\rvert_{\widetilde{Y}^s_0})\bigr) \longrightarrow
  \bigl(f_*\cO_{Y_0}\bigl(k(K_{Y_0/X_0}+\Delta\rvert_{Y_0})\bigr)\bigr)^{\otimes
  s}
\]
which is an isomorphism over $U$.

Denote $Y_0 \coloneqq f^{-1}(X_0)$. In this case, by \cite[Corollary
5.24]{Horing} we know that
\[
  Y_0^s \coloneqq{} \underbrace{Y_0 \times_X Y_0 \times_X \cdots \times_X
  Y_0}_{s\ \text{times}}{}\simeq{}
  \underbrace{Y_0\times_{X_0}Y_0\times_{X_0}\cdots\times_{X_0}Y_0}_{s\ \text{times}}
\]
and that $Y_0^s$ is Gorenstein.
We can therefore apply Lemma \ref{lem:pluritr} to $d \circ \mu$, to obtain a
morphism 
\[
  (d \circ \mu)_*\omega_{\widetilde{Y}_0^{s}/X_0}^{\otimes k} \longrightarrow \omega^{\otimes k}_{Y_0^s/X_0}
\]
which is an isomorphism over $V^s$.
Here $\omega_{Y_0^s/X_0}\coloneqq\omega_{Y_0}\otimes {f^s}^*\omega_{X_0}^{-1}$
and similarly for $\omega_{\widetilde{Y}_0^{s}/X_0}$.
This induces a map
\begin{equation}\label{eq:isooveru}
  \widetilde{f}^s_*\cO_{\widetilde{Y}^s_0}\bigl(k(K_{\widetilde{Y}^s_0/X_0}+\widetilde{\Delta}\rvert_{\widetilde{Y}^s_0}) \bigr)
  \longrightarrow f^s_*\Bigl(\omega_{Y^s_0/X_0}^{\otimes k}\otimes
  \bigotimes_i\pi_i^*\sM\bigr\rvert_{Y^s_0}\Bigr)
\end{equation}
which is an isomorphism over $U$, where 
$\sM \coloneqq \cO_Y(P-kK_Y)$ is the line bundle associated to the
Cartier divisor $P-kK_Y \sim_\RR k\Delta$.

We will now show that the sheaf on the right-hand side of \eqref{eq:isooveru}
admits an isomorphism to
\[
  \bigl(f_*\cO_{Y_0}\bigl(k(K_{Y_0/X_0}+\Delta\rvert_{Y_0})\bigr)\bigr)^{\otimes
  s}.
\]
Note that this would show Claim \ref{step:wpnc2}, since \eqref{eq:isooveru}
is an isomorphism over $U$.
We proceed by induction, adapting the argument in \cite[Lemma 3.15]{Horing}
to our twisted setting.
Note that the case $s = 1$ is clear, since in this case $Y^s = Y$ and the
sheaves in question are equal.

By \cite[Corollary 5.24]{Horing} we have that 
\[
  \omega_{Y_0^s/X_0}^{\otimes k}\otimes\bigotimes_i\pi_i^*\bigl(\sM
  \bigr\rvert_{Y_0}\bigr)\simeq 
  \pi_s^*\bigl(\omega_{Y_0/X_0}^{\otimes k}\otimes \sM\bigr\rvert_{Y_0}\bigr)\otimes \pi'^*\bigl(\omega_{Y_0^{s-1}/X_0}^{\otimes k}\otimes \sM^{s-1}\bigr\rvert_{Y^{s-1}_0}\bigr)
\]
where $\pi'\colon Y^{s}\to Y^{s-1}$ and $\sM^{s-1} \coloneqq \bigotimes_{i=1}^{s-1}\pi_i^*\sM$. Since
$\omega_{Y_0^{s-1}/X_0}^{\otimes k}\otimes \sM^{s-1}\bigr\rvert_{Y^{s-1}_0}$ is locally free, by the projection formula we obtain
\[
  f^s_*\Bigl(\omega_{Y_0^{s}/X_0}^{\otimes k} \otimes \bigotimes_{i=1}^{s}
  \pi_i^*\sM\bigr\rvert_{Y_0}\Bigr) \simeq
  f_*\Bigl(\bigl(\omega_{Y_0/X_0}^{\otimes k}\otimes
    \sM\bigr\rvert_{Y_0}\bigr)\otimes
    \pi_{s_*}\pi'^*\bigl(\omega_{Y_0^{s-1}/X_0}^{\otimes k}\otimes
  \sM^{s-1}\bigr\rvert_{Y^{s-1}_0}\bigr)\Bigr).
  \]
Now by flat base change \cite[Proposition III.9.3]{Har77},
\[
  \pi_{s_*}\pi'^*\bigl(\omega_{Y_0^{s-1}/X_0}^{\otimes k}\otimes
  \sM^{s-1}\bigr\rvert_{Y^{s-1}_0}\bigr) \simeq
  f^*f^{s-1}_*\bigl(\omega_{Y_0^{s-1}/X_0}^{\otimes k}\otimes
  \sM^{s-1}\bigr\rvert_{Y^{s-1}_0}\bigr).
\]
By induction the latter is isomorphic to
\[
  f^*\bigl(f_*\cO_{Y_0}\bigl(k(K_{Y_0/X_0}+\Delta\rvert_{Y_0})\bigr)^{\otimes
  s-1}\bigr).
\]
Therefore
\[
  f^s_*\Bigl(\omega_{Y_0^{s}/X_0}^{\otimes k} \otimes \bigotimes_i
  \pi_i^*\sM\bigr\rvert_{Y_0}\Bigr) \simeq f_*\Bigl(\omega_{Y_0/X_0}^{\otimes
    k}\otimes \sM\bigr\rvert_{Y_0}\otimes
    f^*\bigl(f_*\cO_{Y_0}\bigl(k(K_{Y_0/X_0}+\Delta\rvert_{Y_0})\bigr)^{\otimes
  s-1}\bigr)\Bigr).
\]
Since $f_*\cO_Y(k(K_{Y/X}+\Delta))$ is locally free over $X_0$, we can
apply the projection formula to obtain
\[
  f^s_*\Bigl(\omega_{Y_0^{s}/X_0}^{\otimes k} \otimes \bigotimes_i \pi_i^*\sM\bigr\rvert_{Y_0}\Bigr) \simeq
  \bigl(f_*\cO_{Y_0}\bigl(k(K_{Y_0/X_0}+\Delta\rvert_{Y_0})\bigr)\bigr)^{\otimes
  s}.
\]
This concludes the proof of Claim \ref{step:wpnc2}.

We now use Theorem \ref{thm:ps14} to finish the proof of Theorem \ref{thm:wpnc}.

We first claim $\widetilde{\Delta}$ satisfies the hypothesis of Theorem
\ref{thm:ps14}. To do so, first note that on $\pi_i$ is flat over $Y_0$, and therefore by flat pullback of cycles we have
\begin{equation*}
\pi_i^*(\Delta_j)\big|_{Y_0^s} = \pi_i^{-1}(\Delta_j\big|_{Y_0})=
Y_0\times_{X_0}\cdots\times_{X_0}\underbrace{\Delta_j}_{i^{\mathrm{th}}\text{ position }}\times_{{X_0}}\cdots\times_{X_0} Y_0.
\end{equation*}
Since $Y_0\supseteq V$ and both $d$ and $\mu$ are isomorphisms over $V^s$, the pullback
 $\mu^*(\pi_i\circ d)^*\Delta^h_j\big|_{V^s}$ of the horizontal components of $\Delta$ are smooth above $U$ for all $i\in\{1,2,\ldots,s\}$. 
In other words, the components of $\widetilde{\Delta}$  either do not intersect
$V^s$, or intersect the fiber over 
$x$ transversely for all $x\in U$. Thus,
\[
  \widetilde{\Delta}\big|_{V^s}= 
  \mu^{-1}d^{-1}\sum_i\pi_i^{-1}\big(\Delta^h\big|_{V}\big).
\]
In particular, in the notation of Notation
\ref{notn:goodopen}$(\ref{notn:goodopenb})$, we have that the horizontal part
$\widetilde{\Delta}^h$ equals the closure
$\overline{\widetilde{\Delta}\big|_{V^s}}$ of $\widetilde{\Delta}\big|_{V^s}$ in
$\widetilde{Y}^s$. We can therefore write 
\[
  \widetilde{\Delta} = \widetilde{\Delta}^h+\widetilde{\Delta}^v,
\]
where by construction, the coefficients of $\widetilde{\Delta}^h$ are in $(0,1]$ and $\widetilde{f}^s\big(\widetilde{\Delta}^v\big) \cap U = \emptyset$.

Finally, we note from 
	Mori's cone theorem \cite[Theorem 1.24]{KM98} that $H=\omega_X\otimes\sL^{\otimes n+1}$ is nef
	and hence semiample by the base point free theorem \cite[Theorem 3.3]{KM98}. Therefore $f^*H^{\otimes (\ell-k)}$ is also semiample for all $\ell\ge k$. 
Using $H$ again to denote a divisor class of $H$, we argue that since
\begin{equation}\label{eq:applyps14}
  \widetilde{f}^s_*\cO_{\widetilde{Y}^s}\bigl(k(K_{\widetilde{Y}^s/ X}+ \widetilde{\Delta})\bigr)\otimes H^{\otimes \ell}
  \simeq \widetilde{f}^s_*\cO_{\widetilde{Y}^s}\bigl(k(K_{\widetilde{Y}^s}+ \widetilde{\Delta}+(\ell-k)\widetilde{f}^{s*}H)\bigr)\otimes \sL^{\otimes k(n+1)}
\end{equation}
with $\sL$ ample and globally generated, we can apply Theorem \ref{thm:ps14} to conclude that the sheaf above in (\ref{eq:applyps14}) is generated by global sections over $U$ for all $\ell\geq k$.
Now fix a closed point $x \in U$.
We have the commutative diagram
\[
  \begin{tikzcd}
    H^0\bigl(X,\widetilde{f}^s_*\cO_{\widetilde{Y}^s}\bigl(k(K_{\widetilde{Y}^s/
    X}+ \widetilde{\Delta})\bigr)\otimes H^{\otimes \ell} \bigr)
    \rar[twoheadrightarrow]
    \arrow{d}
    & \bigl(\widetilde{f}^s_*\cO_{\widetilde{Y}^s}\bigl(k(K_{\widetilde{Y}^s/
    X}+ \widetilde{\Delta})\bigr)\otimes H^{\otimes \ell}\bigr)
    \otimes \kappa(x)\arrow{d}[sloped,above]{\sim}\\
    H^0\bigl(X,\big(f_*\cO_Y(k(K_{Y/X}+\Delta))\big)^{[s]}\otimes H^{\otimes
    \ell} \bigr) \rar
    & \bigl(\big(f_*\cO_Y(k(K_{Y/X}+\Delta))\big)^{[s]}\otimes H^{\otimes
    \ell}\bigr) \otimes \kappa(x)
  \end{tikzcd}
\]
where the vertical arrows are induced by the map \eqref{eq:wpncstep2} from
Claim \ref{step:wpnc2}, and the top horizontal arrow is surjective by the global
generation of the sheaves in \eqref{eq:applyps14} over $U$.
Since \eqref{eq:wpncstep2} is an isomorphism over $U$, the right vertical arrow
is an isomorphism, hence by the commutativity of the diagram, the bottom
horizontal arrow is surjective.
We therefore conclude that
$$\big(f_*\cO_Y(k(K_{Y/X}+\Delta))\big)^{[s]}\otimes H^{\otimes \ell}$$
is generated by global sections over $U$ for all $\ell\geq k$.
\end{proof}

\begin{remark}\label{rem:hmx}
When $\lfloor\Delta\rfloor = 0$, if we moreover take $U(f,\Delta)$ to be an open set over which every stratum of $(Y,\Delta)$ is smooth, then applying invariance of log plurigenera
\cite[Theorem 4.2]{HMX}, we can assert that $f_*\cO_Y(k(K_{Y/X}+\Delta))\big|_{U(f,\Delta)}$ is locally free. In this case we can take $X_0$ to be simply the locus inside $X_{\reg}$ over which $f$ is flat.  Moreover, the isomorphism
\begin{equation*}
  \big(f_*\cO_Y(k(K_{Y/X}+\Delta))\big)^{\otimes s} \simeq
  \big(f_*\cO_Y(k(K_{Y/X}+\Delta))\big)^{[s]}
\end{equation*}
automatically holds over $U(f,\Delta)$. Thus, Theorem \ref{thm:wpnc} holds more generally over $U(f,\Delta)$. 
\end{remark}
We now deduce Theorem \ref{thm:wp} from Theorem \ref{thm:wpnc}.
\begin{proof}[Proof of Theorem \ref{thm:wp}]\label{pf:wp}
Using Lemma \ref{lem:logres}, we assume that $Y$ is smooth and $\Delta$ has simple normal crossing
support.
Then, Theorem \ref{thm:wpnc} implies
\[
  \big(f_*\cO_Y(k(K_{Y/X}+\Delta))\big)^{[s]}\otimes H^{\otimes \ell}
\]
is generated by global sections for all $\ell\geq k$ on an open set $U
\subseteq X$. 
Since $f_*\cO_Y\big(k(K_{Y/X}+\Delta))$ is locally free over $U$, the map
\[
  \bigl(f_*\cO_Y(k(K_{Y/X}+\Delta))\bigr)^{[s]}
  \longrightarrow \Sym^{[s]}\bigl(f_*\cO_Y(k(K_{Y/X}+\Delta))\bigr)
\]
is surjective over $U$, hence
\[
  \Sym^{[s]}\big(f_*\cO_Y(k(K_{Y/X}+\Delta))\big)\otimes H^{\otimes \ell}
\]
is also generated by global sections for all $\ell\geq k$ on $U$.

Note that for any ample line bundle $\sL$, there is an integer 
$b\ge 1$ such that $H^{\otimes -k}\otimes \sL^{\otimes b}$ is globally
generated.
For such a $b$, the sheaf
\[
  \Sym^{[s]}\bigl(f_*\cO_Y(k(K_{Y/X}+\Delta))\bigr)\otimes \sL^{\otimes b}
\]
is also generated by global sections on $U$. 
Since $b$ depends only on $k$ and $H$ and 
is independent of $s$, we can set $s=ab$. This implies weak positivity of $f_*\cO_Y(k(K_{Y/X}+\Delta))$ over $U$. 
\end{proof}
\begin{remark}\label{rem:thmwpopen}
  The proof of Theorem \ref{thm:wp} shows that when $Y$ is smooth and $\Delta$
  has simple normal crossings support, the sheaf $f_*\cO_Y(k(K_{Y/X}+\Delta))$ is
  weakly positive over the open set in the statement of Theorem \ref{thm:wpnc}.
\end{remark}

\section{Generic Generation for Pluricanonical Sheaves}
\subsection{Proof of Theorem \ref{thm:sing}}\label{proof:sing}
We now prove Theorem \ref{thm:sing}, following the strategy in \cite[Theorem 1.7]{PS14} and \cite[Theorem
A]{Dut17}.
The idea is to reduce to the case where $Y$ is smooth and $\Delta$ has simple
normal crossings support, and then maneuver into a situation to which Theorem
\ref{thm:dc} applies.
\begin{proof}[Proof of Theorem \ref{thm:sing}]
We start with some preliminary reductions.
  \setcounter{step}{-1}
  \begin{step}\label{step:pluripair0}
    We may assume that the image of the counit morphism
    \begin{equation}\label{eq:pluripaircounit}
      f^*f_*\cO_Y(P) \longrightarrow \cO_Y(P)
    \end{equation}
    for the adjunction $f^* \dashv f_*$ is nonzero.
  \end{step}
  Suppose the image of \eqref{eq:pluripaircounit} is the zero sheaf.
  Then, the natural isomorphism
  \[
    \Hom_{\cO_Y}\bigl(f^*f_*\cO_Y(P),\cO_Y(P)\bigr) \simeq
    \Hom_{\cO_X}\bigl(f_*\cO_Y(P),f_*\cO_Y(P)\bigr)
  \]
  from the adjunction $f^* \dashv f_*$ implies that the identity morphism
  $\id\colon f_*\cO_Y(P) \to f_*\cO_Y(P)$ is the zero morphism.
  This implies $f_*\cO_Y(P) = 0$, hence the conclusion of Theorem \ref{thm:sing}
  trivially holds.
  \begin{step}[cf.\ {\cite[Theorem 1.7, Step 1]{PS14}}]
    \label{step:pluripair1}
    We can reduce to the case where
    \begin{enumerate}[label=$(\alph*)$,ref=\ensuremath{\alph*}]
      \item $Y$ is smooth;\label{step:pluripair1i}
      \item $\Delta$ has simple normal crossings support and coefficients in
        $(0,1]$; and\label{step:pluripair1ii}
      \item The image of \eqref{eq:pluripaircounit}
        is of the form $\cO_Y(P-E)$ for a
        divisor $E$ such that $\Delta+E$ has simple normal crossings
        support.\label{step:pluripair1iii}
    \end{enumerate}
  \end{step}
  A priori, the image of the counit \eqref{eq:pluripaircounit} is of the form
  $\fb \cdot \cO_Y(P)$, where $\fb \subseteq \cO_Y$ is the
  \textsl{relative base ideal} of $\cO_Y(P)$.
  By Step \ref{step:pluripair0}, this ideal is nonzero, and so consider
  a simultaneous log resolution $\mu\colon \widetilde{Y} \to Y$
  of $\fb$ and $(Y,\Delta)$.
  The image of the counit morphism
  \begin{equation}\label{eq:pluripaircounitres}
    \mu^*f^*f_*\cO_Y(P) \longrightarrow \mu^*\cO_Y(P) = \cO_{Y'}(\mu^*P)
  \end{equation}
  is the sheaf $\cO_{Y'}(\mu^*P-E')$ \cite[Generalization 9.1.17]{Laz04b}.
  \par We then apply Lemma \ref{lem:logres} to $\mu$. With the notation of the lemma we
  note that on $\widetilde{Y}$ the counit morphism \eqref{eq:pluripaircounitres} becomes the surjective
  morphism
  \[
    (f \circ \mu)^*(f \circ \mu)_*\cO_{\widetilde{Y}}(\widetilde{P})
    \longtwoheadrightarrow \cO_{\widetilde{Y}}(\mu^*P-E') =
    \cO_{\widetilde{Y}}\bigl(\widetilde{P} - (\widetilde{P} - \mu^*P) -
    E'\bigr).
  \]
  Setting $E \coloneqq (\widetilde{P} - \mu^*P) + E'$, we see that
  $(\ref{step:pluripair1iii})$ holds for $\widetilde{P}$. 
  \par Finally, Theorem \ref{thm:sing} for
  $(\widetilde{Y},\widetilde{\Delta})$ and $\widetilde{P}$ implies that
  \[
    (f \circ \mu)_*\cO_{\widetilde{Y}}(\widetilde{P})\otimes_{\cO_X}
    \sL^{\otimes\ell} \simeq f_*\cO_Y(P) \otimes_{\cO_X} \sL^{\otimes\ell}
  \]
  is generated by global sections on some open set $U$ for $\ell \ge
  k(n^2 + 1)$.
  This concludes Step \ref{step:pluripair1}.
  \medskip
  \par Henceforth, we work in the situation of Step \ref{step:pluripair1}.
  Before moving on to Step \ref{step:pluripair2}, we fix some notation.
  Let $L$ denote the divisor class of $\sL$.
  Let $U$ be the subset of $U(f,\Delta+E)$ where
  \[
    \varepsilon(\sL;x) > \frac{1}{n+\frac{1}{kn}}
  \]
  for every $x \in U$, which is nonempty by Notation
  $\ref{notn:goodopen}(\ref{notn:goodopena})$ and Theorem
  \ref{thm:ekl95}.
  \par We set $m$ to be the smallest positive integer such that 
  $f_*\cO_Y(P) \otimes_{\cO_X} \sL^{\otimes m}$ is globally generated on $U$.
  This integer $m$ exists by \cite[Proposition 2.7]{Kur13} since
  $U \cap \Bsp(L) = \emptyset$ by Corollary \ref{cor:seshadribs}.
  \par Finally, we set
  $B \coloneqq \Bs\lvert P - E + mf^*L \rvert_{\mathrm{red}} \subsetneq Y$ and note that $B\cap f^{-1}(U) = \emptyset$.
  \begin{step}
    \label{step:pluripair2}
    Reducing the problem to $k = 1$ and a suitable pair.
  \end{step}
  \par From now on, fix a closed point $x \in U$.
\par  The surjection
  \[
    f^*f_*\cO_Y(P) \otimes_{\cO_Y} f^*\sL^{\otimes m} \longtwoheadrightarrow
    \cO_Y(P-E) \otimes_{\cO_Y} f^*\sL^{\otimes m}
  \]
  implies that $\cO_Y(P-E) \otimes_{\cO_Y} f^*\sL^{\otimes m}$ is
  globally generated on $f^{-1}(U)$.
  Choose a general member
  \[
    \mathfrak{D}_x \in \lvert P-E+mf^*L \rvert.
  \]
  By Bertini's theorem \cite[Corollary III.10.9 and Remark
  III.10.9.3]{Har77}, we
  may assume that $\mathfrak{D}_x$ is smooth away from the base locus $B$ of the
  linear system $\lvert P-E+mf^*L \rvert$.
  We may also assume that $\mathfrak{D}_x$ intersects the fiber $Y_x$
  transversely, and the support of $\Delta$ and $E$ transversely away
  from $B$ \cite[Lemma 4.1.11]{Laz04a}.
  We then have
  \begin{align*}
    k(K_Y+\Delta) &\sim_\RR K_Y+\Delta + \frac{k-1}{k}\mathfrak{D}_x +
    \frac{k-1}{k}E - \frac{k-1}{k} mf^*L,
    \intertext{hence for every integer $\ell$,}
    k(K_Y+\Delta) + \ell f^*L &\sim_\RR K_Y+\Delta +
    \frac{k-1}{k}\mathfrak{D}_x + \frac{k-1}{k}E + \biggl( \ell - \frac{k-1}{k}
    m\biggr)f^*L.
  \end{align*}
  \par We now adjust the coefficients of $\Delta$ and $E$ so they do not share
  any components.
  Applying Lemma \ref{lem:numeric} to $c = \frac{k-1}{k}$, we see that
  there exists an effective divisor $E' \preceq E$ such that
  \[
    \Delta' \coloneqq \Delta + \frac{k-1}{k}E - E',
  \]
  is effective with simple
  normal crossings support, with components intersecting $Y_x$
  transversely, and with coefficients in $(0,1]$.
  We can then write
  \begin{equation}\label{eq:pluripairstep2decomp}
    P - E' + \ell f^*L \sim_\RR K_Y + \Delta'
    + \frac{k-1}{k}\mathfrak{D} + \biggl(\ell - \frac{k-1}{k}
    m\biggr)f^*L.
  \end{equation}
  \begin{step}\label{step:pluripair3}
    Applying Theorem \ref{thm:dc} to obtain global generation.
  \end{step}
  By Lemma \ref{lem:ps14trick}, we have $f_*\cO_X(P-E') \simeq f_*\cO_X(P)$.
  It therefore suffices to show that
  \begin{equation}\label{eq:pluripairstep3sheaf}
    f_*\cO_Y(P - E') \otimes_{\cO_X} \sL^{\otimes\ell}
  \end{equation}
  is globally generated at $x$.
  We first modify $\mathfrak{D}_x$ to allow us to apply Theorem \ref{thm:dc}.
  By Theorem \ref{citedthm:Szabo}, there exists a common log resolution
  $\mu\colon \widetilde{Y} \to Y$
  for $\mathfrak{D}_x$ and $(Y,\Delta)$ that is an isomorphism
  away from $B \subsetneq Y$.
  We then write
  \[
    \mu^*\mathfrak{D}_x = D + F, \qquad \mu^*\Delta' = \mu_*^{-1}\Delta' + F_1
  \]
  where $D$ is a smooth prime divisor intersecting the fiber over $x$ transversely and
  $F,F_1$ are supported on $\mu^{-1}(B)$.
  Define
  \[
    F' \coloneqq \biggl\lfloor \frac{k-1}{k} F + F_1 \biggr\rfloor, \qquad
    \widetilde{\Delta} \coloneqq \mu^*\Delta' + \frac{k-1}{k}\mu^*\mathfrak{D}_x
    - F', \qquad
    \widetilde{P} \coloneqq \mu^*P + K_{\widetilde{Y}/Y}.
  \]
  Note that $\widetilde{\Delta}$ has simple normal crossings support and
  coefficients in $(0,1]$ by assumption on the log resolution and by definition
  of $F'$.
  Moreover, the support of $\widetilde{\Delta}$ intersects the fiber over $x$
  transversely.
  Pulling back the decomposition in \eqref{eq:pluripairstep2decomp} and adding
  $K_{\widetilde{Y}/Y} - F'$ yields
  \begin{align}
    \widetilde{P} - \mu^*E' - F' + \ell (f \circ \mu)^*L
    &\sim_\RR K_{\widetilde{Y}}
    + \mu^*\Delta' + \frac{k-1}{k}\mu^*\mathfrak{D}_x - F' + \biggl( \ell -
    \frac{k-1}{k}m \biggr)(f \circ \mu)^*L\nonumber\\
    &\sim_\RR K_{\widetilde{Y}} + \widetilde{\Delta} + \biggl( \ell -
    \frac{k-1}{k}m \biggr)(f \circ \mu)^*L.\label{eq:pluripairlastrequiv}
  \end{align}
  \par We now claim that it suffices to show
  \begin{equation}\label{eq:pluripairlastsheaf}
    (f \circ \mu)_*\cO_{\widetilde{Y}}
    (\widetilde{P} - \mu^*E' - F' )
    \otimes_{\cO_X} \sL^{\otimes\ell}
  \end{equation}
  is globally generated at $x$.
  Consider the commutative diagram
  \begin{equation*}
    \setbox0=\hbox\bgroup\ignorespaces
    \begin{tikzcd}
      H^0\bigl(X,(f \circ \mu)_*\cO_{\widetilde{Y}}
        (\widetilde{P} - \mu^*E' - F' )
      \otimes_{\cO_X} \sL^{\otimes\ell}\bigr) \dar[hook] \rar &
      \bigl((f \circ \mu)_*\cO_{\widetilde{Y}}
      (\widetilde{P} - \mu^*E' - F' ) \otimes_{\cO_X} \sL^{\otimes\ell}\bigr) \otimes
      \kappa(x)\arrow[hook]{d}[sloped,above]{\sim}\\
      H^0\bigl(X,(f \circ \mu)_*\cO_{\widetilde{Y}}
        (\widetilde{P} - \mu^*E')
      \otimes_{\cO_X} \sL^{\otimes\ell}\bigr) \rar\arrow{d}[sloped,above]{\sim}
      & \bigl((f \circ \mu)_*\cO_{\widetilde{Y}}
      (\widetilde{P} - \mu^*E' ) \otimes_{\cO_X} \sL^{\otimes\ell}\bigr) \otimes
      \kappa(x)\arrow{d}[sloped,above]{\sim}\\
      H^0\bigl(X,f_*\bigl(\mu_*\cO_{\widetilde{Y}}(\widetilde{P}) \otimes
      \cO_Y(-E')\bigr) \otimes_{\cO_X} \sL^{\otimes\ell}\bigr)
      \arrow{d}[sloped,above]{\sim}\rar &
      \bigl(f_*\bigl(\mu_*\cO_{\widetilde{Y}}(\widetilde{P}) \otimes
      \cO_Y(-E')\bigr) \otimes_{\cO_X} \sL^{\otimes\ell}\bigr) \otimes \kappa(x)
      \arrow{d}[sloped,above]{\sim}\\
      H^0\bigl(X,f_*\cO_Y(P-E') \otimes_{\cO_X} \sL^{\otimes\ell}\bigr) \rar
      & f_*\cO_Y(P-E') \otimes_{\cO_X} \sL^{\otimes\ell} \otimes \kappa(x)
    \end{tikzcd}
    \unskip\egroup\noindent\makebox[\textwidth]{\box0}
  \end{equation*}
  where the top right isomorphism holds since $F'$ is supported away
  from $(f \circ \mu)^{-1}(U)$, hence the stalks of the two sheaves are
  isomorphic, and the other isomorphisms follow from
  the projection formula and the fact that $K_{\widetilde{Y}/Y}$ is
  $\mu$-exceptional.
  If the top horizontal arrow is surjective, then the commutativity of the
  diagram implies that the bottom horizontal arrow is also surjective, i.e.,
  the sheaf in \eqref{eq:pluripairstep3sheaf} is globally generated at $x$.
  \par We now apply Theorem \ref{thm:dc} to the decomposition
  \eqref{eq:pluripairlastrequiv} to
  see that the sheaf in \eqref{eq:pluripairlastsheaf} is globally generated at
  $x$ for all
  \[
    \ell - \frac{k-1}{k}m > \frac{n}{\varepsilon(\sL;x)}.
  \]
  By choice of $U$, we know that $\varepsilon(\sL;x) >
  \frac{1}{n+\frac{1}{kn}}$ at all $x \in U$, and so
  by applying the same argument so far to all $x \in U$, we see
  $f_*\cO_Y(P) \otimes_{\cO_X} \sL^{\otimes \ell}$ is globally generated on $U$
  for all
  \[
    \ell > n\biggl(n+\frac{1}{kn}\biggr) + \frac{k-1}{k}m = n^2 +
    \frac{1}{k} + \frac{k-1}{k}m.
  \]
  By minimality of $m$, we know that
  \[
    m \le \biggl\lfloor n^2 + \frac{1}{k} + \frac{k-1}{k}m \biggr\rfloor
    + 1 \le n^2 + \frac{k-1}{k}m + 1.
  \]
  The inequality between the leftmost and rightmost quantities is equivalent to
  $m \le k(n^2 + 1)$,
  that is, $f_*\cO_Y(P) \otimes_{\cO_X} \sL^{\otimes \ell}$ is globally generated
  on $U$ for $\ell \ge k(n^2 + 1)$.
\end{proof}

\subsection{Proof of Theorem \ref{thm:pluri}}\label{pf:pluri}
Restricting to $X$ smooth and $\sL$ ample, we now show a slightly better bound.
The strategy of Theorem \ref{thm:pluri} is the same as that for Theorem
\ref{thm:sing}: We first reduce to
the case when $Y$ is smooth and $\Delta$ has simple normal crossing support.
Then, using twisted weak positivity this time, we maneuver to a situation in
which we can apply Theorem \ref{thm:dc} or \cite[Proposition 1.2]{Dut17}.

\begin{proof}[Proof of Theorem \ref{thm:pluri}]
We begin with \textbf{Step \ref{step:pluripair0}} and \textbf{Step \ref{step:pluripair1}} of the proof 
of Theorem \ref{thm:sing} to reduce to a situation where $Y$ is smooth and $\Delta$ has simple
normal crossing support. Following Step \ref{step:pluripair1}, we also assume that there exists an effective 
divisor $E$ with simple normal crossing support such that 
\begin{equation}\label{eq:counit}
f^*f_*\cO_Y(P) \longrightarrow \cO_Y(P-E)
\end{equation} is surjective.
\setcounter{step}{1}

\begin{step}\label{step:pluri2}
  Reducing the problem to $k = 1$ and a suitable pair.
\end{step}
 Unless otherwise mentioned, throughout this proof we fix $U$ to denote
the intersection of $U(f,\Delta+E)$ with the open set over which $f_*\cO_Y(P)$ is locally free.

In the diagram
\begin{equation*}
  \begin{tikzcd}
    f^*\bigl(\bigl(f_*\cO_Y(k(K_{Y/X}+\Delta))\bigr)^{\otimes b}\bigr)
    \arrow[twoheadrightarrow]{r}\ar[d] &
    \cO_{Y}\big(bk(K_{Y/X}+\Delta)-bE\bigr)\dar[equals]\\
    f^*\bigl(\bigl(f_*\cO_Y(k(K_{Y/X}+\Delta))\bigr)^{[b]}\bigr)
    \ar[r,dashed] & \cO_{Y}\big(bk(K_{Y/X}+\Delta)-bE\big)
  \end{tikzcd}
\end{equation*}
the dashed map exists making the diagram commute.
Indeed, the map exists over the locus $X_1$ where
$f_*\cO_Y(k(K_{Y/X}+\Delta))$ is locally free. Since $X_1$ has a complement of
codimension $\ge 2$,
and the bottom right sheaf is locally free, we can extend the dashed map to all of $X$ (Corollary \ref{cor:extendsections}).

Now the top arrow is the surjective map obtained by taking the $b$th tensor
power of \eqref{eq:counit}.
Then the commutativity of the diagram implies that the bottom
arrow is also surjective. By Theorem \ref{thm:wpnc} we know that over $U$, 
\[f_*\cO_Y\bigl(k(K_{Y/X}+\Delta)\bigr)^{[b]} \otimes \sL^{\otimes b}\] is generated by global sections
for $b\gg 1$. Therefore so is
$\cO_{Y}\big(bk(K_{Y/X}+\Delta)-bE\big)\otimes f^*\sL^{\otimes b}$ over $f^{-1}(U)$.

We now fix a point $x \in U$.
\par Letting $L$ denote a Cartier divisor class of $\sL$, we can apply Bertini's
theorem to choose a divisor
\[
  D\in \big|bk(K_{Y/X}+\Delta)-bE+bf^*L\big|
\]
such that  on $f^{-1}(U)$, $D$ is smooth, $D+\Delta+E$ has
simple normal crossing support, $D$ is not contained in the support of
$\Delta+E$, and $D$ intersects the fiber over $x$ transversely. Then write

\[\frac{1}{b}D \sim_{\RR} k(K_{Y/X}+\Delta)-E+f^*L.\]
Multiplying both sides by $\frac{k-1}{k}$, and then adding
$K_{Y/X}+\Delta+\frac{k-1}{k}E$, we have 

\begin{equation}\label{red}K_{Y/X}+\Delta+\frac{k-1}{kb}D +\frac{k-1}{k}E \sim_{\RR} k(K_{Y/X}+\Delta) +\frac{k-1}{k}f^*L.\end{equation}
  \par Now applying Lemma \ref{lem:numeric} for $c=\frac{k-1}{k}$, there exists an effective divisor
  $E' \preceq E$ such that
		\[
    \Delta' \coloneqq \Delta + \frac{k-1}{k}E - E'
  \]
	has coeffecients in $(0,1]$.
  Subtracting $E'+\frac{k-1}{k}f^*L$ from both sides in \eqref{red}, we can therefore write
  \[
    K_{Y/X}+\frac{k-1}{kb}D + \Delta'-\frac{k-1}{k}f^*L \sim_{\RR}
    k(K_{Y/X}+\Delta)-E'.
  \]

Let us now denote by $H$ the line bundle $\omega_X\otimes \sL^{\otimes n+1}$ and a divisor class in it at the same time.  
  For a positive integer $\ell$, we add $f^*K_X+(k-1)f^*H+(\ell-(k-1)(n+1))f^*L$ to both sides to obtain
\begin{equation}\label{red1}K_{Y}+\frac{k-1}{kb}D + \Delta'+(k-1)f^*H+\left(\ell-\frac{k-1}{k}-(k-1)(n+1)\right)f^*L\sim_{\RR} P - E'+\ell f^*L.\end{equation}

As noted earlier $E'\preceq E$ is an effective Cartier divisor and therefore $f_*\cO_Y(P-E')\simeq f_*\cO_Y(P)$
by Lemma \ref{lem:ps14trick}. Moreover since the right hand side of \eqref{red1} is a Cartier divisor, it is enough to 
tackle the generation of the left side.
\begin{step}\label{step:pluri3}
 Applying Theorem \ref{thm:dc} to obtain global generation.
\end{step}

First, we need to modify $D$ to be able to apply Theorem \ref{thm:dc}.
\par  Let $\mu\colon Y'\to Y$ be a log resolution of $\frac{k-1}{kb}D + \Delta'$ as in Theorem \ref{citedthm:Szabo}. 
   Such a modification is an isomorphism
  over $f^{-1}(U)$ by choice of $D$.
  Write
  \[
    \mu^*D = \widetilde{D}+F, \qquad \mu^*\Delta' = \widetilde{\Delta}' + F_1 
  \]
  where $\widetilde{D}$ is the strict transform
  of the components of $D$ that lie above $U$
  and $\widetilde{\Delta}'$ is the strict transform of $\Delta'$.
  Note that both $F$ and $F_1$ has support outside of $f^{-1}(U)$.

Denote,
\[F'\coloneqq \left\lfloor\frac{k-1}{kb}F+F_1\right\rfloor, \qquad 
\widetilde{\Delta} \coloneqq \mu^*D+\mu^*\Delta'-F', \qquad \widetilde{P}\coloneqq \mu^*P+K_{Y'/Y}\]
By definition $\widetilde{\Delta}$ has coefficients in $(0,1]$. Now pulling back \eqref{red1} and adding $K_{Y'/Y}-F'$ we 
and rewrite \eqref{red1} as:
\begin{equation*}
  K_{Y'}+\widetilde{\Delta}+ (k-1)\mu^*f^*H+\left(\ell-\frac{k-1}{k}-(k-1)(n+1)\right) \mu^*f^*L \sim_{\RR} \widetilde{P} - \mu^*E'+\ell\mu^*f^*L - F'.
\end{equation*}
This can be compared to \eqref{eq:pluripairlastrequiv}. 
By the arguments following \eqref{eq:pluripairlastrequiv} 
we can say that it is enough to show global generation for the pushforward of the left side 
under $f\circ\mu$ 
to deduce desired global generation for $f_*\cO_Y(P)\otimes \sL^{\otimes \ell}$ for suitable $\ell$.

  To do so, we note once again that from Mori theory it follows that $H=\omega_X\otimes\sL^{\otimes n+1}$
  semiample. Therefore $(k-1)\mu^*f^*H$ is also semiample. 
	Applying Bertini's theorem one more time we can 
	pick an effective fractional $\QQ$-divisor $D' \sim_{\QQ} (k-1)\mu^*f^*H$ with smooth support
	and its support intersects components of $\widetilde{\Delta}+D'$ and the fiber over $x$ transversely. 
  We can now rewrite the linear equivalence as
  \begin{equation}\label{newred}
   K_{Y'}+\widetilde{\Delta}+D'+\left(\ell-\frac{k-1}{k}-(k-1)(n+1)\right)\mu^*f^*L \sim_\RR
    \widetilde{P} -\mu^*E'+ \ell \mu^*f^*L - F' .
  \end{equation}

  Note that $\widetilde{\Delta}+D'$ on the left-hand side of \eqref{newred}
  has simple normal crossing support with coefficients in $(0,1]$ and $\Supp(\widetilde{\Delta}+D')$
	intersects the fiber over $x$ transversely.
  Thus, we can apply Theorem \ref{thm:dc} on the left hand side
  to conclude that  
 \[f_*\cO_Y(P)\otimes \sL^{\otimes \ell}\] is generated by global sections over $U$ 
 for all $\ell> \frac{n}{\varepsilon(L;x)}+k(n+1)-n-\frac{1}{k}$.

After possibly shrinking $U$ we assume that
  $\varepsilon(\sL;x)> \frac{1}{n+\frac{1}{n(k+1)}}$ for all points $x\in U$, and hence  
\[\ell> n\left(n+\frac{1}{n(k+1)}\right)+k(n+1)-n-\frac{1}{k} = k(n+1)+n^2-n -\frac{1}{k(k+1)}.\]
  Therefore, $\displaystyle \ell \ge k(n+1)+n^2-n$. This
  proves $(\ref{thm:e1})$.

\begin{step}The case of klt $\QQ$-pairs.
\end{step}
When $\Delta$ is a klt $\QQ$-pair, we apply \cite[Proposition 1.2]{Dut17} 
on the left hand side of
\eqref{newred}. 
To do so,
we first trace the construction of $\widetilde{\Delta}+D'$ to note that its coefficients lie in $(0,1)$.
We then apply the Proposition with $H= \frac{1}{k}\mu^*f^*L$ and $A=(\ell -k(n+1)+n)\mu^*f^*L$
to obtain global generation on $U$ for all
$\ell> k(n+1)+\frac{n^2-n}{2}$. This proves $(\ref{thm:e2})$.
\end{proof}

We summarize below the locus of global generation for Theorem \ref{thm:sing} and \ref{thm:pluri}:

\begin{remark}\label{rmk:loci}
  When $Y$ is smooth and the relative base locus of $P$ is an effective divisor
  $E$ such that $\Delta+E$ has simple normal crossings support,
  the open set $U$
  for Theorem \ref{thm:sing} contains the largest open subset of $U(f,\Delta+E)$
  such that $\varepsilon(\sL;x)> (n+\frac{1}{kn})^{-1}$
	and for
  Theorem \ref{thm:pluri}$(\ref{thm:e1})$, $U$ contains the intersection of $U(f,\Delta+E)$, the locus where $f_*\cO_Y(P)$ is
  locally free, and the open set where
  $\varepsilon(\sL;x)>\bigl(n+\frac{1}{n(k+1)}\bigr)^{-1}$. Finally, for Theorem \ref{thm:pluri}$(\ref{thm:e2})$,
  $U$ contains the intersection of $U(f,\Delta+E)$ and the locus where
  $f_*\cO_Y(P)$ is locally free.
\end{remark}

\begin{remark}\label{rem:lowdim}
  Using the better bounds in Remark \ref{rem:betterbounds} for low dimensions ($n = 2,3$),
  one can show that the lower bound $\ell \ge k(n^2-n+1)$ in Theorem
  \ref{thm:sing} and $\ell \ge k(n+1)+n^2-2n$ in Theorem \ref{thm:pluri} 
	suffice when $X$ is smooth and $\sL$ is ample. 
  In particular, Conjecture \ref{conj:ps14} for generic global generation holds
  for $n=2$. 
	In the klt case, the conjectured  lower bound in fact holds when $n\leq 4$ as was 
	 observed in \cite{Dut17}.
  \par If the conjectured lower bound for Seshadri constants in Remark \ref{rem:betterbounds} holds,
  then Theorem \ref{thm:sing} would hold for the lower bound $\ell \ge k(n+1)$,
  thereby proving this generic version of Conjecture
  \ref{conj:ps14} in higher dimensions for big and nef line bundles.
\end{remark}
\subsection{An Effective Vanishing Theorem}
With the help of our effective twisted weak positivity, we improve the effective vanishing statement in 
 \cite[Theorem 3.1]{Dut17}:
\begin{theorem}
	\label{thm:effvanish}
	 Let $f\colon Y \to X$ be a smooth fibration of smooth projective varieties with
  $\dim X=n$.
  Let $\Delta$ be a $\QQ$-divisor with simple normal crossing support with coefficients in $[0,1)$,
  such that every stratum of $(Y,\Delta)$ 
  is smooth and dominant over $X$, and let $\sL$ be an ample
  line bundle on $X$.
  Assume also that for some fixed integer $k\geq 1$,
  $k(K_Y+\Delta)$ is Cartier and  
  $\cO_Y(k(K_Y+\Delta))$ is relatively
  base point free.
  Then, for every $i > 0$ and all $\ell \geq k(n+1)-n$, we have
  \[
    H^i\bigl(X, f_* \cO_Y\bigl(k(K_Y+\Delta)\bigr)\otimes \sL^{\otimes \ell}\bigr)=0.
  \]
  \par 
  Moreover, if $K_X$ is semiample, for every $i > 0$ and every ample line bundle $\sL$ we have
  \[
    H^i\bigl(X, f_* \cO_Y\bigl(k(K_Y+\Delta)\bigr)\otimes \sL\bigr)=0.
  \]
\end{theorem}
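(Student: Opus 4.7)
The plan is to follow the strategy of \cite[Theorem 3.1]{Dut17}, substituting Theorem~\ref{thm:wpnc} for the weaker weak-positivity input used there. The argument combines effective global generation with a cyclic-cover construction and a final application of Koll\'ar's vanishing for pushforwards of canonical sheaves.

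First, since every stratum of $(Y,\Delta)$ is smooth and dominant over $X$, Remark~\ref{rem:hmx} gives that $\sF \coloneqq f_*\cO_Y(k(K_{Y/X}+\Delta))$ is locally free on $X$. Applying Theorem~\ref{thm:wpnc} with $s = 1$ and $\ell = k$ yields global generation of $\sF \otimes H^{\otimes k}$ on $X$, where $H = \omega_X \otimes \sL^{\otimes (n+1)}$; equivalently, $f_*\cO_Y(k(K_Y+\Delta)) \otimes \sL^{\otimes k(n+1)}$ is globally generated on $X$. Combining this with the surjection $f^*f_*\cO_Y(k(K_Y+\Delta)) \twoheadrightarrow \cO_Y(k(K_Y+\Delta))$ coming from the relative base-point-free hypothesis, I would upgrade this to global generation of $\cO_Y(k(K_Y+\Delta) + k(n+1)f^*L)$ on $Y$, where $L$ is a Cartier representative of $\sL$.

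Next, by Bertini one can choose a smooth general $B \in |k(K_Y+\Delta) + k(n+1)f^*L|$ meeting every fiber transversely with $B+\Delta$ simple normal crossing. Since $\Delta$ has fractional coefficients, I would pass to a Kawamata cover $\rho\colon Y' \to Y$ that absorbs the fractional part of $\Delta$, and then construct a degree-$k$ cyclic cover $\pi\colon Z \to Y'$ branched over a suitable pullback of $B$, with $Z$ smooth. The standard computation of $\pi_*\omega_Z$, together with the splitting of $\rho_*\cO_{Y'}$ into isotypic components for the Galois action, should identify $f_*\cO_Y(k(K_Y+\Delta)) \otimes \sL^{\otimes (k-1)(n+1)}$ as a direct summand of $(f \circ \rho \circ \pi)_*\omega_Z$.

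Finally, Koll\'ar's vanishing theorem for pushforwards of canonical sheaves gives $H^i\bigl(X, (f \circ \rho \circ \pi)_*\omega_Z \otimes \sM\bigr) = 0$ for every big and nef line bundle $\sM$ and every $i > 0$. Taking $\sM = \sL^{\otimes (\ell - (k-1)(n+1))}$, which is ample exactly when $\ell \geq (k-1)(n+1)+1 = k(n+1) - n$, and extracting the direct summand above proves the first vanishing. For the second statement, the semiampleness of $K_X$ makes $\omega_X^{\otimes(k-1)} \otimes \sL$ big and nef for any ample $\sL$, and a parallel argument---where the extra canonical power is absorbed by $\omega_X^{\otimes(k-1)}$ instead of by further powers of $\sL$---yields vanishing for the minimal twist. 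The main obstacle I expect is the bookkeeping across the Kawamata and cyclic covers: one must verify that the direct summand produced is precisely $f_*\cO_Y(k(K_Y+\Delta))$ rather than a correction such as $f_*\cO_Y(k(K_Y+\Delta) - \lceil \Delta \rceil)$, which should be resolved by choosing the Kawamata cover so that the relevant character of the Galois action selects the correct summand.
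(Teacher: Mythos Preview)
Your approach is viable, but it takes a substantially longer route than the paper's, and the detour is exactly the part you flag as the obstacle.

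The paper does not construct any covers. After obtaining global generation on all of $X$ (using that $U(f,\Delta)=X$ and local freeness from Remark~\ref{rem:hmx}), it invokes weak positivity in the form of Theorem~\ref{thm:wp}: there exists $b\gg 0$ with $\bigl(f_*\cO_Y(k(K_{Y/X}+\Delta))\bigr)^{[b]}\otimes\sL^{\otimes b}$ globally generated. Combined with relative base-point-freeness this produces a general $D$ with $\tfrac{1}{b}D\sim_{\RR} k(K_{Y/X}+\Delta)+f^*L$, and one writes
\[
k(K_Y+\Delta)+\ell f^*L \;\sim_{\RR}\; K_Y+\Bigl(\Delta+\tfrac{k-1}{kb}D\Bigr)+f^*\Bigl((k-1)H+\bigl(\ell-\tfrac{k-1}{k}-(k-1)(n+1)\bigr)L\Bigr),
\]
with $H=K_X+(n+1)L$ semiample. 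The boundary $\Delta+\tfrac{k-1}{kb}D$ is klt, the pulled-back divisor is ample precisely when $\ell\ge k(n+1)-n$, and Koll\'ar's vanishing for klt pairs \cite[Theorem 10.19]{Kol95} finishes the argument in one line. For the second statement one just replaces $H$ by $K_X$ itself, so the ample check becomes $\ell\ge 1$.

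Your Kawamata-plus-cyclic-cover construction is precisely the mechanism by which that klt version of Koll\'ar vanishing is proved, so you are re-deriving the cited theorem inside the argument. The bookkeeping worry you raise is genuine: with your $B\in\lvert k(K_Y+\Delta)+k(n+1)f^*L\rvert$, a naive degree-$k$ cyclic cover would have top eigensheaf $\omega_Y\otimes M^{\otimes(k-1)}$ with $M$ the formal $k$th root, which differs from $\cO_Y(k(K_Y+\Delta))\otimes f^*\sL^{\otimes(k-1)(n+1)}$ by exactly one copy of $\Delta$; the Kawamata cover is what repairs this, and sorting it out is standard but nontrivial. Your use of Theorem~\ref{thm:wpnc} with $s=1$ rather than weak positivity with large $b$ is a harmless variant and yields the same bound after the same linear-equivalence manipulation. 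For the second assertion, however, your sketch does not quite close: the summand you produce is already twisted by $\sL^{\otimes(k-1)(n+1)}$, so absorbing powers of $\omega_X$ afterward cannot reach the twist $\sL^{\otimes 1}$; you would need to rerun the construction with the weak-positivity input twisted by $\sL$ alone (as the paper does), or equivalently just cite the klt Koll\'ar theorem and substitute $H=K_X$.
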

\begin{proof}
	The hypothesis on $f$ and $\Delta$ ensures invariance of log-plurigenera, as noted in Remark \ref{rem:hmx}, hence $f_*\cO_Y(k(K_{Y/X}+\Delta))$ is locally free. This 
	means $U(f,\Delta) = X$. Furthermore, by the description of the open set in the proof of Theorem \ref{thm:wp}, we have that
	there exists a positive
	integer $b$ such that 
  \[
    \bigl(f_*\cO_Y\bigl(k(K_{Y/X}+\Delta)\bigr)\bigr)^{[b]}\otimes \sL^{\otimes b}
  \]
		is globally generated everywhere on $X$. Now since $\cO_Y(k(K_Y+\Delta))$ is relatively base point free, we
		can choose a divisor $\frac{1}{b}D\sim_{\RR} k(K_{Y/X}+\Delta)+f^*L$,
		satisfying the Bertini-type properties as in Step \ref{step:pluri2} of Theorem \ref{thm:pluri}.
  Denote $H\coloneqq K_X+(n+1)L$,
  which is semiample by Mori's cone theorem and the base point free theorem. As before, we then write
  \[
    K_Y+\Delta+\frac{k-1}{kb}D+(k-1)f^*H+\biggl(\ell -
    \frac{k-1}{k}-(k-1)(n+1)\biggr)f^*L \sim_\RR k(K_Y+\Delta)+\ell f^*L.
  \]
	Since the divisor $\Delta+\frac{k-1}{kb}D$ is klt and $(k-1)H+\bigl(\ell -
    \frac{k-1}{k}-(k-1)(n+1)\bigr)L$ is ample for all $\ell\geq k(n+1)-n$, by Koll\'ar's vanishing theorem \cite[Theorem 10.19]{Kol95}
		we obtain that 
	\[H^i\bigl(X, f_* \cO_Y\bigl(k(K_Y+\Delta)\bigr)\otimes \sL^{\otimes
  \ell}\bigr)=0\]
  for all $\ell \geq k(n+1)-n$ and for all $i>0$.  
  
  Moreover, when $K_X$ is already semiample, we take $H=K_X$. In this case, the linear
  equivalence above looks as follows:
  \[
    K_Y+\Delta+\frac{k-1}{kb}D+(k-1)f^*H+ \bigl(\ell - \frac{k-1}{k}\bigr)f^*L \sim_\RR k(K_Y+\Delta)+\ell f^*L.
  \]
  Then, we obtain the desired vanishing for all $\ell\geq 1$ and $i>0$.
\end{proof}

\end{document}